\newenvironment{simplified}{simplified}
\title{Geodesics on Shape Spaces
with \\ Bounded Variation and Sobolev Metrics}
\author{
	\begin{tabular}{c}
		Giacomo Nardi, Gabriel Peyr\'e,  Fran\c{c}ois-Xavier Vialard \\[2mm]
		Ceremade, Universit\'e Paris-Dauphine\footnote{\texttt{\{nardi,peyre,vialard\}@ceremade.dauphine.fr}}\\
	\end{tabular}	
}
\date{}
\begin{document}

\maketitle
\excludecomment{simplified}


\begin{abstract}
This paper studies the space of $BV^2$ planar curves endowed with the $BV^2$ Finsler metric over its tangent space of displacement vector fields. Such a space is of interest for applications in image processing and computer vision because it enables piecewise regular curves that undergo piecewise regular deformations, such as articulations. The main contribution of this paper is the proof of the existence of the shortest path between any two $BV^2$-curves for this Finsler metric.
Such a result is proved by applying the direct method of calculus of variations to minimize the geodesic energy.
This method applies more generally to similar cases such as the space of curves with $H^k$ metrics for $k\geq 2$ integer. This space has a strong Riemannian structure and is geodesically complete. Thus, our result shows that the exponential map is surjective, which is complementary to geodesic completeness in infinite dimensions.
We propose a finite element discretization of the minimal geodesic problem, and use a gradient descent method to compute a stationary point of the  energy. Numerical illustrations show the qualitative difference between $BV^2$ and $H^2$ geodesics.

\subjclass{Primary 49J45, 58B05; Secondary 49M25, 68U05.}

\keywords{Geodesics ; Martingale ; $BV^2$-curves ; shape registration}
\end{abstract}

\section{Introduction}\label{intro}

This paper addresses the problem of the existence of minimal geodesics in spaces of planar curves endowed with several metrics over the tangent spaces. Given two initial curves, we prove the existence of a minimizing geodesic joining them. Such a result is proved by the direct method of calculus of variations.

We treat the case of  $BV^2$-curves and $H^k$-curves ($k\geq 2$ integer). Although the proofs' strategies are the same, the $BV^2$ and $H^k$ cases are slightly different and the proof in the $H^k$ case is simpler. This difference is essentially due to the inherent geometric structures (Riemannian or Finslerian) of each space. 

We also propose a finite element discretization of the minimal geodesic problem. We further relax the problem to obtain a smooth non-convex minimization problem. This enables the use of a gradient descent algorithm to compute a stationary point of the corresponding functional. Although these stationary points are not in general global minimizers of the energy, they can be used to numerically explore  the geometry of the corresponding spaces of curves, and to illustrate the differences between the Sobolev and $BV^2$ metrics.

\subsection{Previous Works}
\label{sec-previous-works}

\paragraph{Shape spaces as Riemannian spaces.}

The  mathematical study of spaces of curves has been largely investigated in recent years; see, for instance, ~\cite{Younes-elastic-distance,Mennucci-CIME}. 
The set of curves is naturally modeled over a Riemannian manifold~\cite{MaMi}. This consists in defining  a Hilbertian metric on each tangent plane of the space of curves, i.e. the set of vector fields which deform infinitesimally a given curve. 
Several recent works~\cite{MaMi,charpiat-new-metrics,Yezzi-Menn-2005a,Yezzi-Menn-2005b} point out that  the choice of the metric notably affects the results of gradient descent algorithms for the numerical minimization of functionals. Carefully designing the metric is therefore crucial to reach better local minima of the energy and also to compute descent flows with specific behaviors. These issues are crucial for applications in image processing (e.g. image segmentation) and computer vision (e.g. shape registration).
Typical examples of such  Riemannian metrics are Sobolev-type metrics~\cite{sundaramoorthi-sobolev-active,sundaramoorthi-2006,sundaramoorthi-new-possibilities,Yezzi-H2}, which lead to smooth curve evolutions. 

\paragraph{Shape spaces as Finslerian spaces.}

It is possible to extend this Riemannian framework by considering more general metrics on the tangent planes of the space of curves. Finsler spaces make use of Banach norms instead of Hilbertian norms~\cite{RF-geometry}. A few recent works~\cite{Mennucci-CIME,Yezzi-Menn-2005a,Rigid-evol} have studied the theoretical properties of Finslerian spaces of curves. 

Finsler metrics are used in~\cite{Rigid-evol} to perform curve evolution in the space of $BV^2$-curves. The authors make use of a generalized gradient, which is the steepest descent direction  according to the Finsler metric. The corresponding gradient flow enables piecewise regular evolutions (i.e. every intermediate curve is piecewise regular), which is useful for applications such as registration of articulated shapes.  
The present work naturally follows~\cite{Rigid-evol}. Instead of considering gradient flows to minimize smooth functionals, we consider the minimal geodesic problem. 
However, we do not consider the Finsler metric favoring piecewise-rigid motion, but instead the standard $BV^2$-metric.
In \cite{Bredies}, the authors study a functional space similar to $BV^2$ by considering functions with finite  total  generalized variation. However, such a framework is not adapted to our applications because functions with finite total generalized variation can be discontinuous.

Our main goal in this work is to study the existence of solutions, which is important to understand the underlying space of curves. This is the first step towards a numerical solution to the minimal path length problem for a metric that favors piecewise-rigid motion.


\paragraph{Geodesics in shape spaces.}

The computation of geodesics over Riemannian spaces is now routinely used in many imaging applications. Typical examples of applications include shape registration~\cite{Mennucci-filtering,Younes-04,2010.03.20}, tracking~\cite{Mennucci-filtering}, and shape deformation~\cite{Kilian-shape-space}. In \cite{Rumpf}, the authors study  discrete geodesics and their relationship with continuous geodesics in the Riemannian framework.  Geodesic computations also serve as the basis to perform statistics on shape spaces (see, for instance,~\cite{Younes-04,Arsigny06}) and to generalize standard tools from Euclidean geometry such as averages~\cite{Arsigny:Siam:07}, linear regression~\cite{Niethammer_GeodReg}, and cubic splines~\cite{2010.03.20}, to name a few.  
However, due to the infinite dimensional nature of shape spaces, not all Riemannian structures lead to well-posed length-minimizing problems. For instance, a striking result~\cite{MaMi,Yezzi-Menn-2005b,Yezzi-Menn-2005a} is that the natural $L^2$-metric on the space of curves is degenerate, despite its widespread use in computer vision applications. Indeed, the geodesic distance between any pair of curves is equal to zero. 

The study of the geodesic distance over shape spaces (modeled as curves, surfaces, or diffeomorphisms) has been widely developed in the past ten years~\cite{MR3132089,MR3080480,MR3011892}. We refer the reader to~\cite{2013arXiv1305.1150B} for a review of this field of research. These authors typically address the questions of existence of the exponential map, geodesic completeness (the exponential map is defined for all time), and  the computation of the curvature. In some situations of interest, the shape space has a strong Riemannian metric (i.e., the  inner product on the tangent space induces an isomorphism between the tangent space and its corresponding cotangent space) so that the exponential map is a local diffeomorphism. 
In~\cite{MuMi_ov} the authors describe geodesic equations for Sobolev metrics. They show in Section 4.3 the local existence and uniqueness of a geodesic with prescribed initial conditions. This result is improved in~\cite{Bruveris}, where the authors prove the existence for all time. Both previous results are proved by techniques from ordinary differential equations.
In contrast, local existence (and uniqueness) of minimizing geodesics with prescribed boundary conditions (i.e. between a pair of curves) is typically obtained using the exponential map. 

In finite dimensions, existence of minimizing geodesics between any two points (global existence) is obtained by the Hopf-Rinow theorem~\cite{HR}. Indeed, if the exponential map is defined for all time (i.e. the space is geodesically complete) then global existence holds. This is, however, not true in infinite dimensions, and a counterexample of non-existence of a geodesic between two points over a manifold is given in~\cite{MR0188943}. An even more pathological case is described in~\cite{MR0400283}, where an example is given where the exponential map is not surjective although the manifold is geodesically complete.
Some positive results exist for infinite dimensional manifolds (see in particular Theorem B in~\cite{Ekland} and Theorem 1.3.36 in~\cite{Mennucci-CIME}) but the surjectivity of the exponential map still needs to be checked directly on a case-by-case basis. 

In the case of a Finsler structure on the shape space, the situation is more complicated, since the norm over the tangent plane is often non-differentiable . This non-differentiability is indeed crucial to deal with curves and evolutions that are not smooth (we mean evolutions of non-smooth curves). That implies that geodesic equations need to be understood in a weak sense. More precisely, the minimal geodesic problem can be seen as a Bolza problem on the trajectories $H^1([0,1],BV^2(\Circ,\RR^2))$. In~\cite{Mord} several necessary conditions for existence of solutions to Bolza problems in Banach spaces are proved within the framework of differential inclusions. Unfortunately, these results require some hypotheses on the Banach space (for instance the Radon-Nikodym property for the dual space) that are not satisfied by the Banach space that we consider in this paper. 

We therefore tackle these issues in the present work and prove  existence of minimal geodesics in the space of $BV^2$ curves by a variational approach. We also show how similar techniques can be applied to the case of Sobolev metrics.

\subsection{Contributions}

Section~\ref{BV2} deals with the Finsler space of $BV^2$-curves. Our main contribution is Theorem~\ref{local_existence}  proving the existence of a minimizing geodesic between two $BV^2$-curves. We also explain how this result can be generalized to the setting of geometric curves (i.e. up to reparameterizations).

Section~\ref{Hs} extends these results to $H^k$-curves with $k\geq 2$ integer, which gives rise to Theorems~\ref{existence_sobolev} and \ref{geom-sob}.  Our results are complementary to those presented in~\cite{MuMi_ov} and~\cite{Bruveris} where the authors show the geodesic completeness of curves endowed with the $H^k$-metrics with $k \geq 2$ integer. We indeed show that the exponential map is surjective.

Section~\ref{discretization} proposes a discretized minimal geodesic problem for $BV^2$ and Sobolev curves. We show  numerical simulations for the computation of stationary points of the energy. In particular, minimization is made by a gradient descent scheme, which requires, in the $BV^2$-case, a preliminary regularization of the geodesic energy.  


\section{Geodesics in the Space of $BV^2$-Curves}\label{BV2}

In this section we define  the set of parameterized $BV^2$-immersed curves and we prove several useful properties. In particular, in Section \ref{repa}, we discuss the properties of reparameterizations of $BV^2$-curves.

The space of parameterized $BV^2$-immersed curves can be  modeled as a Finsler manifold as presented in Section \ref{tangent}.  Then, we can define  a geodesic Finsler distance and prove the existence of a geodesic between two $BV^2$-curves  (Sections~\ref{existence_geod}). Finally, we define the space of geometric curves (i.e., up to reparameterization) and we prove similar results (Section~\ref{13}).
We point out that, in both the parametric and the geometric case,  the geodesic is not unique in general.
Through  this paper we identify the circle $\Circ$ with $[0,1] / \{0 \sim 1\}$.

\subsection{The Space of $BV^2$-Immersed Curves}
\label{11}
 
Let us first recall some needed defintions.
\begin{defn}[{\bf $BV^2$-functions}]
We say that $f\in L^1(\Circ,\RR^2)$ is a function of bounded variation if its first variation $|D f|(\Circ)$ is finite:
$$	 |D f|(\Circ) = \sup \enscond{
		\int_{\Circ}f(s)\cdot g'(s)\, \d s 
	}{
		g\in \mathrm{C}^\infty(\Circ, \R^2),\|g\|_{L^\infty(\Circ,\R)}\leq 1
	} <\infty\,.
$$
 Several times in the following, we use  the fact that the space of functions of bounded variation is a Banach algebra and a chain rule holds. We refer to \cite[Theorem 3.96, p. 189]{AFP} for a proof of these results.

We say that $f\in \BV^2(\Circ,\R^2)$ if $f\in W^{1,1}(\Circ,\R^2)$ and its second variation $|D^2 f|(\Circ)$  is finite:

$$	 |D^2 f|(\Circ) = \sup \enscond{
		\int_{\Circ}f(s)\cdot g''(s)\, \d s 
	}{
		g\in \mathrm{C}^\infty(\Circ, \R^2),\|g\|_{L^\infty(\Circ,\R)}\leq 1
	} <\infty\,.
$$
For a sake of clarity we point out that, as $W^{1,1}\subset BV$, for every $BV^2$-function, the  first variation  coincides  with the $L^1$-norm of the derivative. Moreover, by integration by parts, it holds
$$|D^2 f|(\Circ)=|D f'|(\Circ)\,.$$
\end{defn}

The $BV^2$-norm is defined as
$$\|f\|_{\BVd} = \|f\|_{W^{1,1}(\Circ,\R^2)} + |D^2 f|(\Circ)\,.$$

The space $BV^2(\Circ,\R^2)$ can also be equipped with the following types of convergence, both weaker than the norm convergence:

\begin{itemize}
\item[1.] {\em  Weak* topology}. Let $\{f_h\}\subset BV^2(\Circ,\R^2)$ and $f\in BV^2(\Circ,\R^2)$. We say that $\{f_h\}$  weakly* converges in $BV^2(\Circ,\R^2)$ to $f$ if
$$f_h \overset{W^{1,1}(\Circ,\R^2)}{\longrightarrow} f \quad\mbox{and}\quad D^2 f_h \overset{*}{\rightharpoonup}D^2 f\,, \quad\mbox{as}\quad h \rightarrow \infty\,,$$
where $\overset{*}{\rightharpoonup}$ denotes the weak* convergence  of measures.
\item[2.] {\em Strict topology}. Let $\{f_h\}\subset BV^2(\Circ,\R^2)$ and $f\in BV^2(\Circ,\R^2)$. We say that $\{f_h\}$  strictly converges to $f$ in $BV^2(\Circ,\R^2)$ if
$$f_h \overset{W^{1,1}(\Circ,\R^2)}{\longrightarrow} f \quad\mbox{and}\quad |D^2 f_h|(\Circ) \longrightarrow |D^2 f|(\Circ)\,,\quad\mbox{as}\quad h \rightarrow \infty.$$
\par Note that the following distance 
$$d(f,g) = \|f-g\|_{L^1(\Circ,\R^2)} +||D^2f|(\Circ)-|D^2g|(\Circ)|$$
is a distance in $BV^2(\Circ,\R^2)$ inducing the strict convergence. 
\end{itemize}

The following results can be deduced by the theory of functions of  bounded variation \cite{AFP, EG}.

\begin{prop}[{\bf weak* convergence}]
Let $\{f_h\}\subset BV^2(\Circ,\R^2)$. Then $\{f_h\}$  weakly* converges  to $f$ in $BV^2(\Circ,\R^2)$ if and only if  $\{f_h\}$ is bounded in $BV^2(\Circ,\R^2)$ and strongly converges to $f$  in $W^{1,1}(\Circ,\R^2)$.
\end{prop}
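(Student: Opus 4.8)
The plan is to reduce the statement to the classical weak* compactness/convergence theorem for scalar $BV$ functions (see e.g. \cite{AFP}), applied to the sequence of derivatives $\{f_h'\}$, since $f\in BV^2(\Circ,\R^2)$ means precisely $f\in W^{1,1}(\Circ,\R^2)$ with $f'\in BV(\Circ,\R^2)$ and $D^2f=Df'$.

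First I would prove the direct implication. Assume $\{f_h\}$ weakly* converges to $f$ in $BV^2(\Circ,\R^2)$. By the very definition of this convergence we already have $f_h\to f$ in $W^{1,1}(\Circ,\R^2)$, so only the uniform $BV^2$-bound remains to be checked. The $W^{1,1}$-convergence gives a uniform bound on $\|f_h\|_{W^{1,1}(\Circ,\R^2)}$; and the weak* convergence $D^2f_h\overset{*}{\rightharpoonup}D^2f$ in the space of $\R^2$-valued Radon measures, combined with the Banach--Steinhaus theorem (a weakly* convergent sequence in the dual of the separable space $C(\Circ,\R^2)$ is norm-bounded), yields $\sup_h|D^2f_h|(\Circ)<\infty$. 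Summing the two bounds gives boundedness in $BV^2(\Circ,\R^2)$.

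Conversely, assume $\{f_h\}$ is bounded in $BV^2(\Circ,\R^2)$ and $f_h\to f$ in $W^{1,1}(\Circ,\R^2)$. Then $f_h'\to f'$ in $L^1(\Circ,\R^2)$, and $\|f_h'\|_{BV}=\|f_h'\|_{L^1(\Circ,\R^2)}+|D^2f_h|(\Circ)$ is uniformly bounded, so $\{f_h'\}$ is bounded in $BV(\Circ,\R^2)$. By the classical characterization of weak* convergence in $BV$, the $L^1$-convergence $f_h'\to f'$ together with this uniform $BV$-bound forces $f'\in BV(\Circ,\R^2)$, hence $f\in BV^2(\Circ,\R^2)$, and $Df_h'\overset{*}{\rightharpoonup}Df'$, i.e. $D^2f_h\overset{*}{\rightharpoonup}D^2f$; together with the assumed $W^{1,1}$-convergence this is exactly weak* convergence in $BV^2(\Circ,\R^2)$. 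For completeness, the classical $BV$ fact can be recovered directly: boundedness of $\{D^2f_h\}$ in $C(\Circ,\R^2)'$ gives, via Banach--Alaoglu and separability of $C(\Circ,\R^2)$, weak* sequential compactness; any subsequential limit $\mu$ satisfies $\int_\Circ g\,d\mu=\lim_h\int_\Circ g\,dD^2f_h=\lim_h\int_\Circ f_h\cdot g''\,\d s=\int_\Circ f\cdot g''\,\d s$ for all $g\in C^\infty(\Circ,\R^2)$, using integration by parts and $f_h\to f$ in $L^1$ with $\|g''\|_{L^\infty}<\infty$, so $\mu=D^2f$, and independence of the subsequence upgrades this to convergence of the whole sequence. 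There is no substantial obstacle; the only points needing a little care are the subsequence--uniqueness argument and invoking the correct duality and separability to get weak* sequential compactness of bounded sets of measures.
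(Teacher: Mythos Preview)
Your proposal is correct and follows exactly the route the paper indicates: the paper does not give a proof but simply states that the result ``can be deduced by the theory of functions of bounded variation'' with references to \cite{AFP,EG}, and your argument---reducing to the classical weak* characterization in $BV$ applied to the derivative sequence $\{f_h'\}$, with Banach--Steinhaus for the forward bound and Banach--Alaoglu plus a density/uniqueness-of-limit argument for the converse---is precisely how that deduction goes.
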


\begin{prop}[{\bf embedding}]
The following continuous embeddings hold:
$$\BVd \hookrightarrow W^{1,\infty}(\Circ,\R^2)\;,\quad \quad \BVd \hookrightarrow C^0(\Circ,\R^2)\,.$$
In particular (see Claim 3 p.218 in~\cite{EG}) we have 
\begin{equation}\label{embedding}
	\foralls f\in BV(\Circ,\R^2), \quad
	\norm{f}_{L^\infty(\Circ,\R^2)}\leq   \norm{f}_{BV(\Circ,\R^2)}.
\end{equation}
\end{prop}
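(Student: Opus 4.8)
The plan is to obtain both embeddings from the one-dimensional inequality \eqref{embedding}, which we take as given since it is quoted from \cite{EG}, by applying it in turn to $f$ and to its weak derivative $f'$. The one preliminary observation required is that every $f\in BV^2(\Circ,\R^2)$ has $f'\in BV(\Circ,\R^2)$: indeed $f\in W^{1,1}(\Circ,\R^2)$ gives $f'\in L^1(\Circ,\R^2)$, and the identity $|D^2 f|(\Circ)=|Df'|(\Circ)$ recorded in the definition of $BV^2$ gives $|Df'|(\Circ)<\infty$, so that $\|f'\|_{BV(\Circ,\R^2)}=\|f'\|_{L^1(\Circ,\R^2)}+|D^2 f|(\Circ)$.

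Granting this, I would apply \eqref{embedding} first to $f\in W^{1,1}(\Circ,\R^2)\subset BV(\Circ,\R^2)$, which gives $\|f\|_{L^\infty}\leq\|f\|_{BV}=\|f\|_{W^{1,1}}$ (recall that for $W^{1,1}$ functions the first variation equals the $L^1$-norm of the derivative), and then to $f'\in BV(\Circ,\R^2)$, which gives $\|f'\|_{L^\infty}\leq\|f'\|_{BV}=\|f'\|_{L^1}+|D^2 f|(\Circ)$. Adding the two estimates,
\[
\|f\|_{W^{1,\infty}(\Circ,\R^2)}=\|f\|_{L^\infty}+\|f'\|_{L^\infty}\leq\|f\|_{W^{1,1}}+\|f'\|_{L^1}+|D^2 f|(\Circ)\leq 2\,\|f\|_{BV^2(\Circ,\R^2)},
\]
which is exactly the continuity of the embedding $BV^2(\Circ,\R^2)\hookrightarrow W^{1,\infty}(\Circ,\R^2)$.

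For the embedding into $C^0$, I would use the fact that a function of one real variable lying in $W^{1,1}$ (here on $\Circ$) admits an absolutely continuous, hence continuous, representative, by the fundamental theorem of calculus for Sobolev functions; combined with the bound $\|f\|_{C^0}=\|f\|_{L^\infty}\leq\|f\|_{W^{1,1}}\leq\|f\|_{BV^2(\Circ,\R^2)}$ already obtained, this yields the continuous embedding $BV^2(\Circ,\R^2)\hookrightarrow C^0(\Circ,\R^2)$. None of these steps is a genuine obstacle: the only points deserving a little care are the implication $f'\in L^1,\ |Df'|(\Circ)<\infty\Rightarrow f'\in BV$ and the choice of the continuous representative, whereas the one substantive ingredient --- controlling the supremum norm by the total variation --- is precisely the cited inequality \eqref{embedding}, which itself follows from a routine mollification argument together with the elementary bound of the oscillation of a smooth periodic function by its total variation.
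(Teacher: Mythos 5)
Your proof is correct. The paper gives no written proof of this proposition --- it simply defers to the standard one-dimensional $BV$ theory of \cite{AFP,EG} --- and your argument (applying the quoted bound \eqref{embedding} to $f$ and to $f'\in BV(\Circ,\R^2)$, noting $|Df'|(\Circ)=|D^2f|(\Circ)$, and invoking the absolutely continuous representative of a $W^{1,1}$ function of one variable for the $C^0$ embedding) is exactly the standard deduction the citation is meant to cover.
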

We refer to  \cite{Maitine-BV2} for a deeper analysis of   $BV^2$-functions. \newline \\

We can now define the set of $BV^2$-immersed curves and prove that it is a manifold modeled on $\BVd$. In the following we denote by $\gm$ a generic $BV^2$-curve and by $\gm'$ its derivative.
Recall also that, as $\gm'$ is a $BV$-function of one variable, it admits a left and right limit at every point of $\Circ$ and it is continuous everywhere except on a (at most) countable set of  points of $\Circ$.
The space of smooth immersion of $\Circ$ is defined by
\begin{equation}\label{immersion-smooth}
	\Imm(\Circ,\R^2)\, = \,
	\enscond{ \gm \in C^{\infty}(\Circ,\R^2)  }{
		\gm'(s) \neq 0 \quad \foralls \, s \in \Circ }\,.
\end{equation} 
The natural extension of this definition to $\BVd$-curves is 
\begin{equation}\label{immersion-BV2}
	\Imm_{\BVd}(\Circ,\R^2)\, = \,
	\enscond{	\gm \in \BVd  }{
		0 \notin [\lim_{t\to s^+} \gm'(t),\lim_{t\to s^-} \gm'(t)] \quad \foralls \, s \in \Circ }\,,
\end{equation} 
where $[\lim_{t\to s^+} \gm'(t),\lim_{t\to s^-} \gm'(t)]$ denotes the segment connecting the two points.
This definition implies that $\gm$ is locally the graph of a $BV^2(\R,\R)$ function. However, in the rest of the paper, we relax this assumption and work on a larger space under the following definition.

\begin{defn}
[{\bf $BV^2$-immersed curves}]\label{BV2curves}
A $BV^2$-immersed curve is any closed curve  $\gm \in BV^2( \Circ, \R^2)$ satisfying
\begin{equation}\label{cond-derivative}
	\underset{t\rightarrow s^{+(-)}}{\lim} \|\gm'(t)\| \neq 0	\quad \forall s\in\Circ\,.
\end{equation}
We denote by $\Bb$ the set of $BV^2$-immersed curves.  
\end{defn}
Although a bit confusing, we preferred to work with this definition of immersed curves, since it is a stable subset of $\BVd$ under reparameterizations.
Note that this definition allows for cusp points and thus curves in $\Bb$ cannot be in general viewed as the graph of a $BV^2(\R,\R)$ function.
Condition \eqref{cond-derivative} allows one to define a Fr\'enet frame for a.e.-$s\in \Circ$ by setting
\begin{equation}\label{def-tangente}
	{\bf t}_{\gm}(s) = \frac{\gm'(s)}{\|\gm'(s)\|}\,,\quad \quad \n_\gm(s)= {\bf t}_{\gm}(s)^\bot\,,
\end{equation}
\eq{
	\qwhereq (x,y)^\bot= (-y,x)\quad \foralls (x,y)\in \RR^2.
}
Finally we denote by $\len(\gm)$  the length of $\gm$ defined as
\begin{equation}\label{length-def}
\len(\gm)=\int_{\Circ}\|\gm'(s)\|\d s\,.
\end{equation}

The next proposition proves a useful equivalent property of \eqref{cond-derivative}:
\begin{prop}  Every  $\gm\in BV^2(\Circ,\RR^2)$ satisfies \eqref{cond-derivative} if and only if 
\begin{equation}\label{cond-derivative-bis}
\underset{s\in \Circ}{\mbox{essinf}} \,\|\gm'(s)\|\,>\,0\,.
\end{equation}

\end{prop}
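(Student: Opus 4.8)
The plan is to work with the one-sided limits of $\|\gm'\|$, which exist at every point of $\Circ$ because $\gm'\in BV(\Circ,\RR^2)$ is a $BV$ function of one variable (as recalled above) and the Euclidean norm is continuous; consequently $\lim_{t\to s^{+}}\|\gm'(t)\| = \|\lim_{t\to s^{+}}\gm'(t)\|$ and likewise from the left, so that \eqref{cond-derivative} is exactly the assertion that neither one-sided limit of $\|\gm'\|$ vanishes, at any $s$. I will also use the recalled fact that $\gm'$ is continuous off an at most countable (hence Lebesgue-null) set $N\subset\Circ$, so that $\gm'(s)=\lim_{t\to s^+}\gm'(t)=\lim_{t\to s^-}\gm'(t)$ whenever $s\notin N$.

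First I would prove that \eqref{cond-derivative-bis} implies \eqref{cond-derivative}. Set $m=\mathrm{essinf}_{\Circ}\|\gm'\|>0$; then $\{s:\|\gm'(s)\|\ge m\}$ has full measure, so for every $s\in\Circ$ each interval $(s,s+\delta)$ meets it, and one may choose $t_n\downarrow s$ with $\|\gm'(t_n)\|\ge m$; passing to the limit gives $\|\lim_{t\to s^{+}}\gm'(t)\|=\lim_n\|\gm'(t_n)\|\ge m>0$, and the same argument on the left gives $\|\lim_{t\to s^{-}}\gm'(t)\|\ge m>0$. Hence \eqref{cond-derivative} holds.

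For the converse I would argue by contraposition: assume $\mathrm{essinf}_{\Circ}\|\gm'\|=0$ and exhibit a point violating \eqref{cond-derivative}. For each $n$ the set $E_n=\{s\in\Circ:\|\gm'(s)\|<1/n\}$ has positive Lebesgue measure, so $E_n\setminus N\ne\emptyset$; pick $s_n\in E_n\setminus N$. By compactness of $\Circ$, a subsequence of $(s_n)$ converges to some $s^\ast$, and after a further extraction one of three cases occurs: $s_{n_k}\downarrow s^\ast$ strictly, $s_{n_k}\uparrow s^\ast$ strictly, or $s_{n_k}=s^\ast$ for all $k$. In the first case $\|\lim_{t\to s^{\ast+}}\gm'(t)\|=\lim_k\|\gm'(s_{n_k})\|\le\lim_k 1/n_k=0$; in the second case similarly $\|\lim_{t\to s^{\ast-}}\gm'(t)\|=0$; in the third case $s^\ast\notin N$ forces $\gm'$ to be continuous at $s^\ast$ with $\|\gm'(s^\ast)\|<1/n_k\to 0$, so both one-sided limits of $\|\gm'\|$ at $s^\ast$ vanish. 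In every case \eqref{cond-derivative} fails, establishing the contrapositive and hence the proposition.

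The only genuinely delicate point is the bookkeeping in this last step: one must select the points $s_n$ inside $E_n$ but outside the countable jump set $N$, which is exactly what rules out the degenerate subcase $s_{n_k}\equiv s^\ast$ and guarantees a contradiction there as well. Everything else is a routine translation between an almost-everywhere lower bound and an everywhere statement about one-sided limits, made legitimate by the existence of one-sided limits for one-variable $BV$ functions.
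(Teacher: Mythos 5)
Your proof is correct and follows essentially the same route as the paper: both directions rest on the existence of one-sided limits of the $BV$ function $\gm'$, the a.e.\ characterization of the essential infimum, and a sequence-extraction argument toward the relevant one-sided limit. Your bookkeeping is in fact slightly more careful than the paper's (choosing the points $s_n$ off the countable discontinuity set so as to dispose of the constant-subsequence case), but this is a refinement of the same argument, not a different approach.
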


\begin{proof} As $\gm'\in BV(\Circ, \RR^2)$, it admits a left and right limit at every point of $\Circ$ so that we can define the following functions:
$$\forall\,s\in \Circ\;,\quad \gm_l'(s) = \underset{t \rightarrow s^-}{\lim}\,\gm'(t)\;,\quad \quad  \gm_r'(s) = \underset{t \rightarrow s^+}{\lim}\,\gm'(t)\,,$$
where $\gm'_l$ and $\gm'_r$ are continuous from the left and the right, respectively and satisfy 
$$\underset{s\in \Circ}{\mbox{essinf}} \,\|\gm'(s)\|= \underset{s\in \Circ}{\mbox{essinf}} \,\|\gm_r'(s)\|= \underset{s\in \Circ}{\mbox{essinf}} \,\|\gm_l'(s)\|\,.$$

Let us suppose that  $\gm'$ verifies \eqref{cond-derivative} and $\underset{s\in \Circ}{\mbox{essinf}} \,\|\gm'(s)\|\,=\,0$. Then we can define a sequence $\{s_n\}\subset \Circ$ such that $\gm'(s_n)\to 0$, and (up to a subsequence) we have $s_n\to s$ for some $s\in \Circ$. Now, up to a subsequence, the sequence $s_n$ is a left-convergent sequence (or a right-convergent sequence), which implies that $\gm'_l(s_n)\to \gm_l'(s) =0$. This is of course in contradiction with \eqref{cond-derivative}. The right-convergence case is similar.

Now, let us suppose that $\gm'$ satisfies \eqref{cond-derivative-bis} so that $\gm'_r$ and $\gm'_l$ also satisfy \eqref{cond-derivative-bis}. 
Then if $ \underset{t \rightarrow s^-}{\lim}\,\gm'(t) =0$ for some $s\in \Circ$, for  every $\varepsilon <  \underset{s\in \Circ}{\mbox{essinf}} \,\|\gm_l'(s)\|$ there exists $\delta$ such that $]s-\delta,s]\subset \{\|\gm'_l\|<\varepsilon\}$, which is in contradiction  with $\underset{s\in \Circ}{\mbox{essinf}} \,\|\gm_l'(s)\|>0$. This proves that the left limit is positive at every point. By using $\gm'_r$ we can similarly show that the right limit is also positive, which proves \eqref{cond-derivative}.
\end{proof}
We can now show that $\Bb$ is  a manifold modeled on $\BVd$ since it is open in $\BVd$.

\begin{prop}\label{openB0}   
$\Ba$ is an open set  of $\BVd$. 
\end{prop}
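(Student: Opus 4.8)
The plan is to show that $\Ba = \Imm_{BV^2}(\Circ,\R^2)$ (equivalently, the set of $\gm \in BV^2$ satisfying \eqref{cond-derivative}) is open by exhibiting, around each of its points, a ball in the $BV^2$-norm that stays inside the set. The key tool is the embedding $BV^2(\Circ,\R^2) \hookrightarrow W^{1,\infty}(\Circ,\R^2)$ from the Embedding proposition, which gives a constant $C>0$ with $\|g\|_{L^\infty} + \|g'\|_{L^\infty} \le C\|g\|_{BV^2}$ for all $g \in BV^2$. In particular, $BV^2$-convergence implies uniform convergence of the derivatives.

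First I would fix $\gm \in \Ba$. By the previous proposition, condition \eqref{cond-derivative} is equivalent to $m := \operatorname{essinf}_{s\in\Circ}\|\gm'(s)\| > 0$. Now let $\eta \in BV^2(\Circ,\R^2)$ with $\|\eta\|_{BV^2} < m/(2C)$, where $C$ is the embedding constant above. Then $\|\eta'\|_{L^\infty} \le C\|\eta\|_{BV^2} < m/2$, so for a.e. $s$,
\[
\|(\gm+\eta)'(s)\| \;\ge\; \|\gm'(s)\| - \|\eta'(s)\| \;\ge\; m - \tfrac{m}{2} \;=\; \tfrac{m}{2} \;>\;0,
\]
hence $\operatorname{essinf}_{s}\|(\gm+\eta)'(s)\| \ge m/2 > 0$, and by the equivalence of \eqref{cond-derivative} and \eqref{cond-derivative-bis} the perturbed curve $\gm+\eta$ again satisfies \eqref{cond-derivative}. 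Since $BV^2$ is a vector space, $\gm+\eta \in BV^2(\Circ,\R^2)$, so $\gm + \eta \in \Ba$. This shows the open ball of radius $m/(2C)$ around $\gm$ is contained in $\Ba$, so $\Ba$ is open.

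There is essentially no hard part here: the only mild subtlety is making the passage from the pointwise/essential-infimum bound back to condition \eqref{cond-derivative}, which is precisely what the preceding proposition provides, and ensuring one uses a bound controlling the $L^\infty$-norm of the \emph{derivative} (not just of the function), which is exactly the content of the continuous embedding $BV^2 \hookrightarrow W^{1,\infty}$. One should also note that $m/(2C)$ depends on $\gm$ (through $m$), which is fine for openness. I would present the argument in these three short steps: invoke the embedding to get $C$, translate \eqref{cond-derivative} into the essinf condition \eqref{cond-derivative-bis}, and then estimate $\|(\gm+\eta)'\|$ from below.
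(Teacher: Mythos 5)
Your proof is correct and follows essentially the same route as the paper: both use the embedding of $BV^2$ into $W^{1,\infty}$ (the paper via $\|\gm'\|_{L^\infty(\Circ,\R^2)}\leq\|\gm'\|_{BV(\Circ,\R^2)}$, you via a generic constant $C$) to control the derivative of the perturbation uniformly, and then invoke the equivalence of \eqref{cond-derivative} with the essential-infimum condition \eqref{cond-derivative-bis} to conclude that a small $BV^2$-ball around any $\gm\in\Ba$ stays in $\Ba$. No gap to report.
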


\begin{proof}

Let $\gm_0\in \Ba$. We prove that 
\begin{equation}\label{ball}
U_{\gm_0} =		\enscond{ \gm\in \BVd 
		}{
			\norm{\gm - \gm_0}_{BV^2(\Circ,\RR^2)} \leq \frac{1}{2} \underset{s\in\Circ}{\mbox{essinf}}\,\|\gm'(s)\|
		}
		\subset \Ba\,.
\end{equation}
In fact, by~\eqref{embedding}, we have $\norm{\gm'}_{L^\infty(\Circ,\R^2)}\leq  \norm{\gm'}_{\BV(\Circ,\R^2)}$,  so that  every curve $\gm\in BV^2(\Circ,\R^2)$ such that  
$$\|\gm - \gm_0\|_{BV^2(\Circ,\R^2)}\leq\frac{1}{2} \underset{s\in\Circ}{\mbox{essinf}}\,\|\gm'(s)\|\;$$
satisfies~\eqref{cond-derivative-bis}.  

\end{proof}

\begin{rem}[{\bf immersions, embeddings, and orientation}]\label{properties}

We point out that condition \eqref{cond-derivative} does not guarantee that curves belonging to  $\Bb$ are injective. 
This implies in particular that every element of  $\Bb$ needs not  be  an embedding (see Figure~\ref{not-emb}).

\begin{figure}[h!]
\centering
\includegraphics[width=.4\linewidth, angle=270]{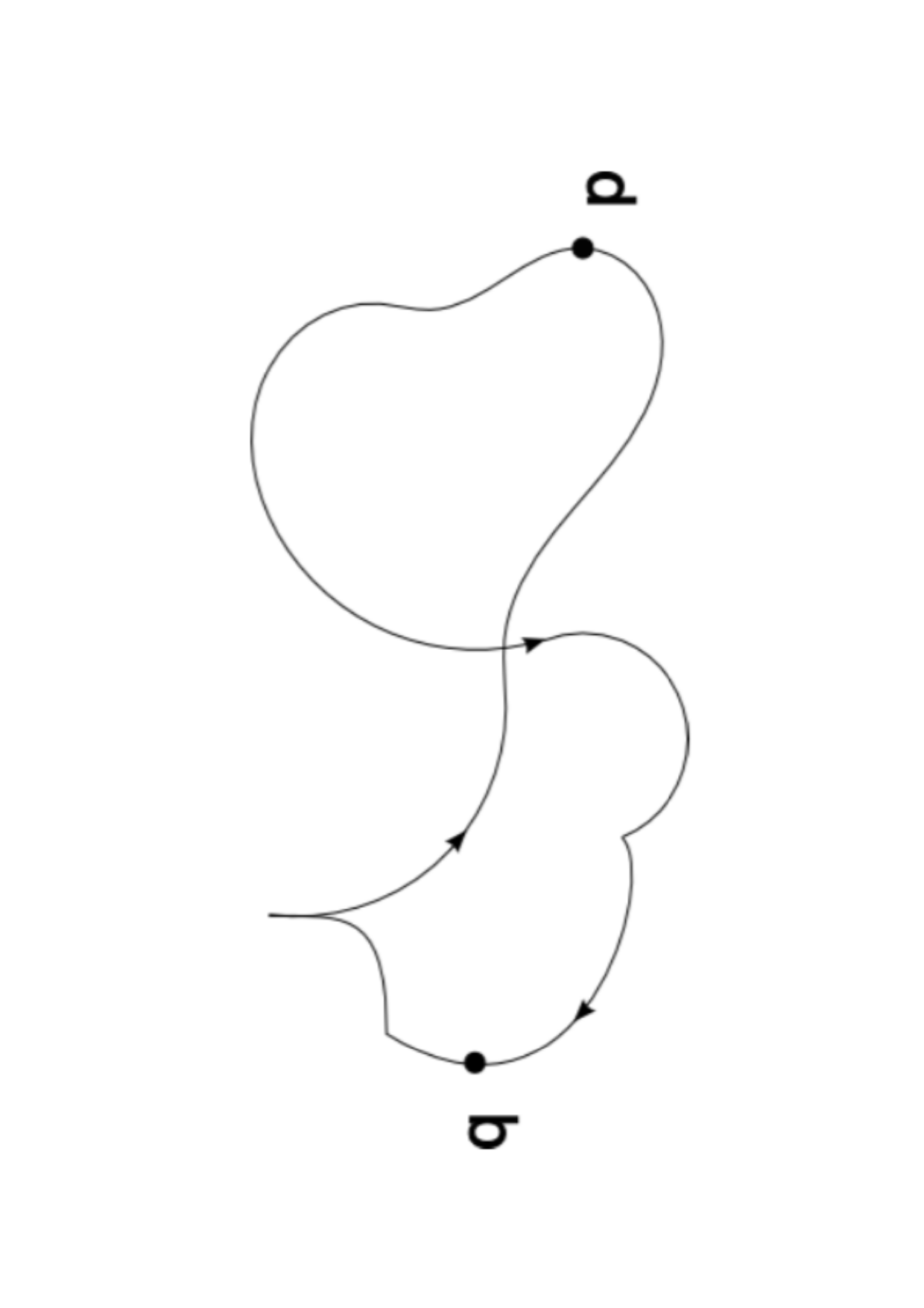}
\caption{\label{not-emb} Non-injective $BV^2$-immersed curve positively oriented with respect to $p$.}
\end{figure}

Moreover, as $BV^2$-immersed curves can have some self-intersections,  the standard notion of orientation (clockwise or counterclockwise) defined for Jordan's curves cannot be used in our case. The interior of a $BV^2$-immersed curve can be disconnected and the different branches of the curve can be parameterized with incompatible orientations. For example, there is no  standard counterclockwise parameterization of the curve in Fig.\ref{not-emb}.

In order to define a suitable notion of orientation, we introduce the notion of orientation with respect to an extremal point. For every $\gm\in \Bb$ and $p\in \gm(\Circ)$ we say that $p$ is an extremal point for $\gm$ if $\gm(\Circ)$ lies  entirely in a closed half-plane bounded by a line
through $p$. 

We also suppose that the Fr\'{e}net frame denoted $({\bf t}_p, {\bf n}_p)$ is well defined at $p$, where ${\bf n}_p$ denotes here the unit outward normal vector. Then, we say that $\gm$ is positively oriented with respect to $p$ if  the ordered pair $({\bf n}_p, {\bf t}_p)$ gives the
counterclockwise orientation of $\R^2$. 
For example the curve in  Fig.\ref{not-emb} is positively oriented with respect to the point $p$ but negatively oriented with respect to $q$.

\end{rem}

\subsection{Reparameterization of $BV^2$-Immersed Curves}\label{repa}

In this section we introduce the set of reparameterizations adapted to  our setting. We prove in particular that it is always possible to define a constant speed reparameterization.

Moreover, we point out  several properties describing the relationship between the convergence of parameterizations and the convergence of the reparameterized curves.
On one hand, in Remark \ref{disc-metric} we underline that the reparameterization operation is not continuous with respect to the $BV^2$-norm. On the other hand, Lemma \ref{conv-w11} proves that  the convergence of the curves implies the convergence of the respective constant speed parameterizations.

\begin{defn}[{\bf reparameterizations}]\label{reparam}
We denote by 	${\rm Diff}^{BV^2}(\Circ)$  the set of homeomorphisms  $\varphi\in BV^2(\Circ,\Circ)$ such that $\varphi^{-1} \in BV^2(\Circ,\Circ)$. The elements of  ${\rm Diff}^{BV^2}(\Circ)$ are called reparameterizations.
Note that any $\varphi \in {\rm Diff}^{BV^2}(\Circ)$ can be considered as an element of $BV_{loc}^2(\RR,\RR)$ by the lift operation (see \cite{ghys}). Moreover the usual topologies (strong, weak, weak*)
on subsets of ${BV^2}(\Circ,\Circ)$ will be induced by the standard topologies on the corresponding subsets of ${BV^2_{loc}}(\RR,\RR)$.

\end{defn}

The behavior of $\BVd$ curves under reparameterizations is discontinuous due to the strong $BV$ topology as described below.
\begin{rem}[{\bf discontinuity of the  reparameterization operation}]\label{disc-metric}

In this remark we give a counterexample to the following conjecture: for every $\gm\in \Bb$ and for every  sequence of parameterizations $\{\varphi_h \}$ strongly converging  to $\varphi\in BV^2(\Circ, \Circ)$ we obtain that $\gm\circ\varphi_h$ strongly converges  to $\gm\circ\varphi$  in $BV^2(\Circ, \RR^2)$ .

This actually proves  that the composition with a reparameterization is not a continuous function from the set of  reparameterizations to $\Bb$.

We consider the curve $\gm$ drawn in Fig. \ref{corner} and we suppose that it is  counterclockwise oriented and that the corner point corresponds to the parameter $s=0$. Note also that the second variation of $\gm$ is represented by a Dirac delta measure $v\delta_0$ in a neighborhood of $s=0$, where $v$ is a vector such that $\|v\|> \alpha >0$.

Then we consider the family of parameterizations defined by

$$ \quad \varphi_h(s)=s+\frac 1 h\,,$$
where the addition is considered modulo $1$. This 
sequence of reparameterizations shifts the corner  point on $\Circ$ and converges $BV^2$-strongly to the identity reparameterization $\varphi(s)=s$. 

Moreover, we have that 
$\gm(\varphi_h(s))=\gm(s+1/h)$ for every $s\in\Circ$, which implies that $\gm\circ\varphi_h$ converges to $\gm\circ\varphi=\gm$ strongly in $W^{1,1}(\Circ,\RR^2)$.
However, similarly to $\gm$,  the second variation  of $\gm\circ\varphi_h$  is represented by a Dirac delta measure in a neighborhood of the parameter corresponding to the corner.  Then 
$$|D (\gm\circ\varphi_h-\gm)|(\Circ)\,>\,\alpha\;,\ \quad\quad\varepsilon \rightarrow +\infty\,,$$ which implies that  the reparameterized curves do not converge to the initial one with respect the $BV^2$-strong topology.

\begin{figure}[h!]
\centering
\includegraphics[width=.3\linewidth]{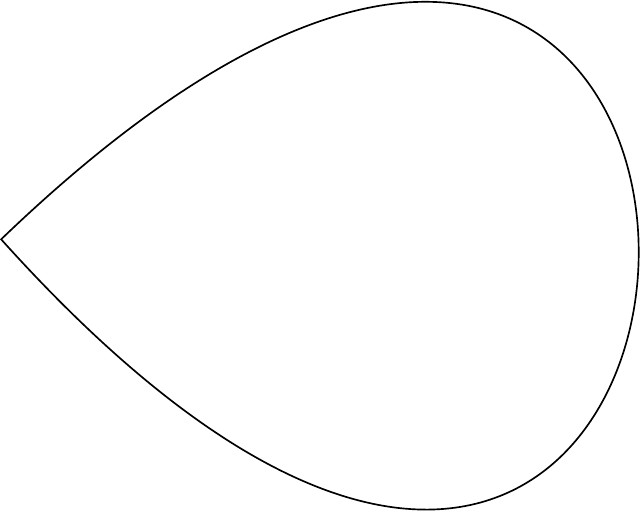}
\caption{\label{corner} Immersed $BV^2$-curve with a corner.}\vspace{1cm}
\end{figure}

\end{rem}

\begin{rem}[{\bf constant speed parameterization}]\label{arc-len} Property~\eqref{cond-derivative}  allows us to define the constant speed parameterization for every $\gm\in \Bb$. We start by setting
$$s_\gm : \Circ \rightarrow \Circ\,,$$
$$s_\gm(s) =\frac{1}{\len(\gm)} \int_{s_0}^s \,\|\gm'(t)\| \, \d t \;,\quad s_0\in \Circ\,$$
where $\len(\gm)$ denotes the length of $\gm$ defined in \eqref{length-def} and where $s_0$ is a chosen basepoint belonging to $\Circ$. 
Now, because of \eqref{cond-derivative}, we can define $\varphi_\gm =s_\gm^{-1}$ and  the constant speed parameterization of $\gm$ is given by $\gm\circ \varphi_\gm$. 
In order to prove that $s_{\gm}$ is invertible we apply the result proved in \cite{Clarke}. In this paper the author gives a condition on the generalized derivative of a Lipschitz-continuous function in order to prove that it is invertible. We detail how to apply this result to our case.

Because of Rademacher's theorem, as $s_{\gm}$ is Lipschitz-continuous, it is a.e. differentiable. Then we consider  the generalized derivative at $s\in \Circ$, which is defined as the convex hull of the elements $m$ of the form
$$m\,=\, \underset{i\rightarrow +\infty}{\lim} s_{\gm}'(s_i)\,,$$
where $s_i\rightarrow s$ as $i\rightarrow +\infty$ and  $s_{\gm}$ is differentiable at every $s_i$.
Such a set is denoted by $\partial s_{\gm}(s)$ and it is a non-empty compact convex set of $\R$. Now, in \cite{Clarke} it is proved that if $0\notin \partial s_{\gm}(s)$ then $s_{\gm}$ is locally invertible at $s$. We remark that, in our setting, such a condition is satisfied because of \eqref{cond-derivative} so that the constant speed parameterization is well defined for every $\gm\in \Bb$.

Finally we remark that $s_{\gm}, \varphi_\gm\in BV^2(\Circ, \Circ)$. For a rigorous proof of this fact we refer to Lemma~\ref{bound-rep-bv}.
\end{rem}

The next two lemmas prove some useful properties of the constant speed parameterization.

\begin{lem}\label{bound-rep-bv}
If $\gm \in \Bb$ is such that $\underset{s\in \Circ}{\mbox{essinf}}\,\| \gm'(s) \| \geq \varepsilon >0$ and $\| \gm \|_{BV^2(\Circ,\R^2)} \leq M$, then  there exists a positive constant $D = D(\varepsilon,M)$ such that 
$ \| \phi_\gm \|_{BV^2(\Circ,\Circ)} \leq D$. 
\end{lem}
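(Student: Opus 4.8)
The plan is to turn the two hypotheses into two-sided Lipschitz bounds for $s_\gm$ that depend only on $\varepsilon$ and $M$, transfer them to $\varphi_\gm=s_\gm^{-1}$, and then control the second variation of $\varphi_\gm$ by writing its derivative as an explicit $BV$ function composed with the monotone change of variables $\varphi_\gm$.

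First I would record elementary bounds. From $\operatorname*{essinf}_{\Circ}\|\gm'\|\geq\varepsilon$ we get $\len(\gm)\geq\varepsilon$, while $\len(\gm)\leq\|\gm\|_{W^{1,1}(\Circ,\R^2)}\leq M$; and the embedding \eqref{embedding} applied to $\gm'\in BV(\Circ,\R^2)$ gives $\|\gm'\|_{L^\infty}\leq\|\gm'\|_{BV(\Circ,\R^2)}\leq\|\gm\|_{BV^2(\Circ,\R^2)}\leq M$. Hence $g:=\|\gm'\|/\len(\gm)=s_\gm'$ satisfies $\varepsilon/M\leq g\leq M/\varepsilon$ a.e., so $s_\gm$ is an increasing bi-Lipschitz homeomorphism of $\Circ$, and therefore so is $\varphi_\gm=s_\gm^{-1}$, with $\varepsilon/M\leq\varphi_\gm'\leq M/\varepsilon$ a.e. Since $\varphi_\gm'\geq 0$ integrates to $1$ over a period and, choosing the lift with $\varphi_\gm(0)\in[0,1)$, $\|\varphi_\gm\|_{L^1(\Circ)}\leq 2$, we already get $\|\varphi_\gm\|_{W^{1,1}(\Circ,\Circ)}\leq 3$.

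It remains to bound $|D^2\varphi_\gm|(\Circ)=|D\varphi_\gm'|(\Circ)$. Since $\gm'\in BV(\Circ,\R^2)$ and $x\mapsto\|x\|$ is $1$-Lipschitz, the chain rule for $BV$ functions (AFP, Thm.~3.96) gives $\|\gm'\|\in BV(\Circ,\R)$ with $|D\|\gm'\||(\Circ)\leq|D\gm'|(\Circ)\leq M$, hence $g\in BV(\Circ,\R)$ with $|Dg|(\Circ)\leq M/\varepsilon$; applying the chain rule once more with $t\mapsto 1/t$ (which is $(M/\varepsilon)^2$-Lipschitz on the range $[\varepsilon/M,M/\varepsilon]$ of $g$), we obtain $h:=1/g\in BV(\Circ,\R)$ with $|Dh|(\Circ)\leq(M/\varepsilon)^3$. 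Differentiating $s_\gm(\varphi_\gm(\sigma))=\sigma$ at a.e. $\sigma$ yields $\varphi_\gm'=h\circ\varphi_\gm$ a.e. Finally, because $\varphi_\gm$ is an increasing bi-Lipschitz homeomorphism of $\Circ$, reparameterizing a one-dimensional $BV$ function by $\varphi_\gm$ leaves its (essential) variation unchanged, so $|D^2\varphi_\gm|(\Circ)=|D(h\circ\varphi_\gm)|(\Circ)=|Dh|(\Circ)\leq(M/\varepsilon)^3$. Combining, $\|\varphi_\gm\|_{BV^2(\Circ,\Circ)}\leq 3+(M/\varepsilon)^3=:D(\varepsilon,M)$.

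The two applications of the $BV$ chain rule and the elementary length/embedding estimates are routine. The point that deserves care — and the main obstacle — is the last paragraph: one must justify the inverse-function identity $\varphi_\gm'=h\circ\varphi_\gm$ a.e. (using that the bi-Lipschitz map $\varphi_\gm$ has the Lusin property, so the null set where $s_\gm$ is not differentiable pulls back to a null set), and one must establish that composing a one-dimensional $BV$ function with a monotone homeomorphism preserves the pointwise, hence essential, variation. Both facts are classical, but they are where the argument has genuine content rather than bookkeeping.
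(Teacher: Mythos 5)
Your proof is correct and follows essentially the same route as the paper: two-sided bounds on $s_\gm'$ from the hypotheses, the $BV$ chain rule, the identity $\varphi_\gm'=(1/s_\gm')\circ\varphi_\gm$, and invariance of one-dimensional variation under the monotone change of variables — which is exactly what the paper compresses into its estimate $|D\varphi_\gm'|(\Circ)\leq \frac{\len(\gm)^2}{\varepsilon^2}|Ds_\gm'|(\Circ)$ "by a straightforward calculation and the chain rule". You merely make explicit (with explicit constants and the Lusin/representative issues) the steps the paper leaves implicit.
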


\begin{proof}
Recall that the reparameterization $\phi_\gm$ is the inverse of  $s_\gm(s) =\frac{1}{\len(\gm)} \int_{s_0}^s \,\|\gm'(t)\| \, \d t$,
where $s_0$ is a chosen basepoint belonging to $\Circ$. Then in particular we have  
$$ \underset{s\in \Circ}{\mbox{essinf}}\,s_\gm'(s) \geq \frac{\varepsilon }{ \len(\gm)} \,.$$ Moreover, because of \eqref{embedding}, $\| \gm' \|$ is bounded by $M$. In particular we have $\| s_\gm \|_{L^\infty(\Circ, \Circ)}\leq 1$ and $\| s_\gm' \|_{L^1(\Circ, \Circ)}=1$, and, by the chain rule for $BV$-functions, we also get $|D s_\gm'|(\Circ)\leq \beta M$ with $\beta=\beta(\varepsilon, \len(\gamma))$. 
We finally have
$$\| s_\gm \|_{BV^2(\Circ, \Circ)}\leq 2(1+\beta M).$$
Then, by a straightforward calculation and the chain rule, we get that 

$$\| \varphi_\gm \|_{L^1(\Circ, \Circ)}\leq 1 \;, \quad \| \varphi_\gm' \|_{L^{\infty}(\Circ, \Circ)}\leq  \frac{\len(\gm)}{\varepsilon} \;, \quad  |D\varphi_\gm'|(\Circ)\leq \frac{\len(\gm)^2}{\varepsilon^2}|Ds_\gm'|(\Circ)\,,$$
which proves the lemma.
\end{proof}

\begin{lem}\label{conv-w11}
Let $\{\gm_h\}\subset \Ba$ be a sequence satisfying  
\begin{equation}\label{hyp-conv}
0<\underset{h}{\inf}\,\underset{s\in \Circ}{\mbox{essinf}}\,\| \gm_h'(s) \| < \underset{h}{\sup}\,\| \gm_h' \|_{L^\infty(\Circ,\R^2)} < \infty \,,\quad \underset{h}{\inf}\,\len( \gm_h) > 0\,,
\end{equation}
and  converging to $\gm \in \Ba$ in $W^{1,1}(\Circ,\RR^2)$. Then $\varphi_{\gm_h}\rightarrow \varphi_{\gm}$ in $W^{1,1}(\Circ, \Circ)$.
\end{lem}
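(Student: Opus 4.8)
The plan is to reduce everything to the auxiliary maps $s_{\gm_h}$, which depend on $\gm_h$ only through $\|\gm_h'\|$ and $\len(\gm_h)$, to establish convergence there, and then pass to the inverses $\varphi_{\gm_h}=s_{\gm_h}^{-1}$. \emph{Step 1 (convergence of the $s_{\gm_h}$).} Since $\gm_h\to\gm$ in $W^{1,1}$ means $\gm_h'\to\gm'$ in $L^1(\Circ,\R^2)$, the pointwise reverse triangle inequality gives $\|\gm_h'\|\to\|\gm'\|$ in $L^1(\Circ)$, hence $\len(\gm_h)=\int_\Circ\|\gm_h'\|\to\len(\gm)$. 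Combined with $\inf_h\len(\gm_h)>0$ this yields $s_{\gm_h}'=\|\gm_h'\|/\len(\gm_h)\to\|\gm'\|/\len(\gm)=s_\gm'$ in $L^1(\Circ)$, and, integrating from the basepoint $s_0$ (at which all $s_{\gm_h}$ and $s_\gm$ vanish), $s_{\gm_h}\to s_\gm$ uniformly on $\Circ$. Hypothesis \eqref{hyp-conv} moreover provides uniform two-sided bounds $0<c\leq s_{\gm_h}'\leq C$ a.e.\ (and, shrinking $c$ and enlarging $C$ if needed, also $c\leq s_\gm'\leq C$, using $\gm\in\Ba$ and $BV^2\hookrightarrow W^{1,\infty}$); equivalently the inverses are uniformly bi-Lipschitz, $1/C\leq\varphi_{\gm_h}'\leq 1/c$ and $1/C\leq\varphi_\gm'\leq1/c$ a.e.

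\emph{Step 2 (uniform convergence of the inverses).} The $s_{\gm_h}$ are increasing homeomorphisms of $\Circ$, all sending $s_0$ to $0$, and they converge uniformly to the increasing homeomorphism $s_\gm$. Since $\{\varphi_{\gm_h}\}$ is equi-Lipschitz, Arzel\`a--Ascoli produces subsequential uniform limits, and any such limit $\psi$ satisfies $s_\gm(\psi(s))=s$ by passing to the limit in $s_{\gm_h}(\varphi_{\gm_h}(s))=s$ (using uniform convergence of $s_{\gm_h}$ and continuity of $s_\gm$); hence $\psi=\varphi_\gm$, so the whole sequence converges, $\varphi_{\gm_h}\to\varphi_\gm$ uniformly, in particular in $L^1(\Circ,\Circ)$.

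\emph{Step 3 (convergence of the derivatives in $L^1$).} This is the heart of the matter. Set $\rho_h:=\varphi_\gm\circ s_{\gm_h}$. Using the change of variable $s=s_{\gm_h}(u)$ together with $\varphi_{\gm_h}'(s_{\gm_h}(u))=1/s_{\gm_h}'(u)$ and $\varphi_\gm'(s_{\gm_h}(u))=1/s_\gm'(\rho_h(u))$ one obtains
$$\int_\Circ|\varphi_{\gm_h}'(s)-\varphi_\gm'(s)|\,\d s=\int_\Circ\Big|1-\frac{s_{\gm_h}'(u)}{s_\gm'(\rho_h(u))}\Big|\,\d u\leq\frac{1}{c}\int_\Circ\big|s_{\gm_h}'(u)-s_\gm'(\rho_h(u))\big|\,\d u,$$
and then $\int_\Circ|s_{\gm_h}'(u)-s_\gm'(\rho_h(u))|\,\d u\leq\|s_{\gm_h}'-s_\gm'\|_{L^1}+\int_\Circ|s_\gm'(u)-s_\gm'(\rho_h(u))|\,\d u$. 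The first summand tends to $0$ by Step 1. For the second, note that $\rho_h=\varphi_\gm\circ s_{\gm_h}\to\varphi_\gm\circ s_\gm=\mathrm{id}$ uniformly and that $\rho_h'=\varphi_\gm'(s_{\gm_h})\,s_{\gm_h}'$ lies between the uniform constants $c/C$ and $C/c$; hence, by the $L^1$-continuity of composition with uniformly bi-Lipschitz homeomorphisms converging uniformly to the identity — proved in the standard way by approximating $s_\gm'\in L^1(\Circ)$ by a continuous function and changing variables in the resulting error term — it too tends to $0$. Together with Step 2 this gives $\varphi_{\gm_h}\to\varphi_\gm$ in $W^{1,1}(\Circ,\Circ)$.

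I expect Step 3 to be the main obstacle. Because only $W^{1,1}$-convergence of $\gm_h$ is assumed, with no control on $|D^2\gm_h|$, the family $\{\varphi_{\gm_h}\}$ need not be bounded in $BV^2$, so one cannot shortcut the argument by invoking compactness of the embedding $BV^2\hookrightarrow W^{1,1}$; the $L^1$-convergence of the derivatives must be extracted through the change-of-variables computation above. The delicate point there is that $s_\gm'$ is merely an $L^1$ function (indeed only $BV$, since $\gm'$ may jump), so its composition with $\rho_h$ is not controlled by continuity and genuinely requires the density argument.
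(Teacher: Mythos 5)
Your proposal is correct and follows essentially the same route as the paper's proof: convergence of $s_{\gm_h}$ to $s_\gm$ from \eqref{hyp-conv}, then the change of variable $s=s_{\gm_h}(t)$, which turns $\int_{\Circ}|\varphi_{\gm_h}'-\varphi_\gm'|\,\d s$ into exactly the integral involving $1-s_{\gm_h}'/\bigl(s_\gm'\circ\varphi_\gm\circ s_{\gm_h}\bigr)$ that you estimate. The only divergence is in the final limit passage, where the paper simply invokes the dominated convergence theorem while you justify the convergence of the composition term $s_\gm'\circ\rho_h$ by approximating $s_\gm'$ with a continuous function and changing variables back --- a slightly more detailed justification of the same step, not a different method.
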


\begin{proof}
By \eqref{hyp-conv} and the dominated convergence theorem we can prove that    $s_{\gm_h}\rightarrow s_\gm$ in $W^{1,1}(\Circ, \Circ)$.
Moreover, because of Lemma \ref{bound-rep-bv},  $\varphi_\gm \in BV^2(\Circ, \Circ)$, so it is continuous. 
Now, by performing the change of variable $s= s_{\gm_h}(t)$, we get 
$$\int_{\Circ} \|\varphi_{\gm_{h}}(s)-\varphi_{\gm}(s)\| \d s\,=\, \int_{\Circ} \|t-\varphi_{\gm}(s_{\gm_h}(t))\|\,\| s_{\gm_h}'(t)\| \d t\,,$$
$$\int_{\Circ} \|\varphi_{\gm_{h}}'(s)-\varphi_{\gm}'(s)\| \d s\,=\, \int_{\Circ} \left\|\frac{1}{s_{\gm_h}'(t)}-\frac{1}{s_\gm'(\varphi_{\gm}(s_{\gm_h}(t)))}\right\|\,\| s_{\gm_h}'(t)\| \d t\,.$$
Then, as $\varphi_\gm$ is continuous and  $s_{\gm_h}\rightarrow s_\gm$ in $W^{1,1}(\Circ, \Circ)$, we get the result by \eqref{hyp-conv} and  the dominated convergence theorem. 
\end{proof}

\subsection{The Norm on the Tangent Space}\label{tangent}

We can now define the norm on the tangent space to $\Bb$ at $\gm\in \Bb$, which is used to define the length of a path. We first recall the main definitions and properties of functional spaces equipped with the measure $\d \gm$.
 
\begin{defn}[\bf functional spaces w.r.t. $\d \gm$]\label{dgm}

Let $\gm\in\Bb$ and $f:\Circ\rightarrow \RR^2$. 
We consider the following  measure $\d\gm$ defined as 
$$
\d \gm(A)=\int_A \|\gm'(s)\| \d s \; \quad \forall \, A\subset \Circ\,.
$$
Note that, as 
$\int_A \d s =0 \;\; \Leftrightarrow \;\; \int_A \d\gm(s)=0$
for every open set $A$ of the circle, we get 
\begin{equation}\label{eq-equi-norm-linf}
	\norm{f}_{L^\infty(S^1,\RR^2)} = \norm{f}_{L^\infty(\gm)}\,,
\end{equation} 
where 
$$\norm{f}_{L^\infty(\gm)} = \inf\,\{a\,:\, f(x)<a\quad d\gm-a.e.\}\,. $$
Moreover, the derivative and the $L^1$-norm with respect to such a measure are given by
$$\frac{\d f}{\d \gm}(s)\,=\, \underset{\varepsilon \rightarrow 0}{\lim} \, \frac{f(s+\varepsilon)-f(s)}{\d \gm((s-\varepsilon, s+\varepsilon))} =\frac{f'(s)}{\|\gm'(s)\|} \,,\quad \|f\|_{L^1(\gm)} = \int_{\Circ} \|f(s)\|\|\gm'(s)\|\, \d s\,. $$
Note that, as $\gm\in \Ba$, the above derivative is well defined almost everywhere.
Similarly, the $W^{1,1}(\gm)$-norm is defined by
\begin{equation}\label{w11-rel}
\|f\|_{W^{1,1}(\gm)}= \|f\|_{L^1(\gm)}  +  \left\| \frac{\d f}{\d \gm}\right\|_{L^1(\gm)}\,.
\end{equation}
Moreover,  the first and  second variations of $f$ with respect to the measure $\d \gm$ are defined respectively by 
\begin{equation}\label{TV}	
	 TV_\gm\left(f\right) = \sup \enscond{
		\int_{\Circ}f(s)\cdot \frac{\d g}{\d \gm(s)}(s)\, \d\gm(s) 
	}{
		g\in \mathrm{C}^{\infty}(\Circ,\RR^2),\|g\|_{L^\infty(\Circ,\R^2)}\leq 1
	} \,
\end{equation}
 and
\begin{equation}\label{TV2}
	 TV_\gm^2\left(f\right) = \sup \enscond{
		\int_{\Circ}f(s)\cdot \frac{\d^2 g}{\d \gm(s)^2}(s)\, \d\gm(s) 
	}{
		g\in \mathrm{C}^{\infty}(\Circ,\RR^2),\|g\|_{L^\infty(\Circ,\R^2)}\leq 1
	} \,.
\end{equation}
Finally, $BV(\gm)$ is the space of functions belonging to $L^1(\gm)$ with finite first variation $TV_\gm$. Analogously $BV^2(\gm)$ is the set of functions $W^{1,1}(\gm)$ with finite second variation $TV_\gm^2$.

\end{defn}

The next lemma points out some useful relationships between the quantities previously introduced.

\begin{lem}\label{equiv} For very $f \in BV^2(\gm)$ the following identities hold:

\begin{itemize}
\item[(i)] $TV_\gm^2\left(f\right)=TV_\gm\left(\frac{\d f}{\d \gm}\right)\,$;

\item[(ii)] $TV_\gm(f)=|Df|(\Circ)\,$;

\item[(iii)] $TV_\gm\left(f\right)=\left\| \frac{\d f}{\d \gm}\right\|_{L^1(\gm)}=\|f'\|_{L^1(\Circ,\RR^2)}\,$.

\end{itemize}

\end{lem}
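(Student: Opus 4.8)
The plan is to reduce everything to the standard total-variation identities for $BV$-functions of one variable, using the explicit formula $\frac{\d f}{\d\gm}(s) = \frac{f'(s)}{\|\gm'(s)\|}$ and the fact that $\d\gm(s) = \|\gm'(s)\|\,\d s$. First I would prove (iii), since it is the computational heart of the statement. The middle equality $\left\|\frac{\d f}{\d\gm}\right\|_{L^1(\gm)} = \|f'\|_{L^1(\Circ,\RR^2)}$ follows immediately by substituting the formula for $\frac{\d f}{\d\gm}$ into the definition \eqref{w11-rel} of $\|\cdot\|_{L^1(\gm)}$: the factor $\|\gm'(s)\|$ in the measure cancels the factor $\|\gm'(s)\|$ in the denominator. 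For the first equality $TV_\gm(f) = \left\|\frac{\d f}{\d\gm}\right\|_{L^1(\gm)}$, I would start from the definition \eqref{TV}, rewrite $\int_\Circ f(s)\cdot\frac{\d g}{\d\gm}(s)\,\d\gm(s) = \int_\Circ f(s)\cdot g'(s)\,\d s$ (again the $\|\gm'\|$ factors cancel), and observe that the supremum over $g\in C^\infty$ with $\|g\|_{L^\infty}\le 1$ is exactly $|Df|(\Circ)$; since $f\in BV^2(\gm)\subset W^{1,1}$, this first variation equals $\|f'\|_{L^1(\Circ,\RR^2)}$, which by the previous cancellation equals $\left\|\frac{\d f}{\d\gm}\right\|_{L^1(\gm)}$.

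Statement (ii), $TV_\gm(f) = |Df|(\Circ)$, is then essentially already established in the course of proving (iii): the rewriting $\int_\Circ f\cdot\frac{\d g}{\d\gm}\,\d\gm = \int_\Circ f\cdot g'\,\d s$ shows the two suprema in \eqref{TV} and in the definition of $|Df|(\Circ)$ coincide termwise, hence are equal. I would state (ii) as a corollary of that identification, so it costs no extra work.

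For (i), $TV_\gm^2(f) = TV_\gm\!\left(\frac{\d f}{\d\gm}\right)$, I would argue as follows. Applying (ii) to the function $\frac{\d f}{\d\gm}$ (which lies in $BV(\gm)$ precisely because $f\in BV^2(\gm)$) gives $TV_\gm\!\left(\frac{\d f}{\d\gm}\right) = \left|D\frac{\d f}{\d\gm}\right|(\Circ)$. On the other side, in \eqref{TV2} I would rewrite $\int_\Circ f(s)\cdot\frac{\d^2 g}{\d\gm^2}(s)\,\d\gm(s)$: by the chain-rule/substitution, $\frac{\d^2 g}{\d\gm^2}\cdot\d\gm = \frac{\d}{\d\gm}\!\left(\frac{\d g}{\d\gm}\right)\|\gm'\|\,\d s$; the cleanest route is to integrate by parts once to move one derivative onto $f$, turning the supremum into $\sup\{\int_\Circ \frac{\d f}{\d\gm}(s)\cdot\frac{\d g}{\d\gm}(s)\,\d\gm(s)\}$ over the same class of test functions $g$, which is exactly $TV_\gm\!\left(\frac{\d f}{\d\gm}\right)$ by definition \eqref{TV}. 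One must check that the class of admissible test functions is unchanged under this manipulation — i.e. that $\frac{\d g}{\d\gm}$ ranges over (a dense enough subset of) the $C^\infty$ fields with sup-norm $\le 1$ — and that the boundary terms vanish since we are on the circle $\Circ$ with no boundary.

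The main obstacle is the bookkeeping in (i): one has to be careful that replacing the second-derivative test $\frac{\d^2 g}{\d\gm^2}$ by a first-derivative test applied to $\frac{\d f}{\d\gm}$ does not change the value of the supremum — in particular that restricting to test fields of the special form "$\frac{\d g}{\d\gm}$ for $g\in C^\infty$" still recovers the full first variation $TV_\gm$, which requires a density argument since $\gm'$ is only $BV$ and not smooth. The regularity hypothesis $\gm\in\Bb$ (so $\|\gm'\|$ is bounded below and above, by \eqref{cond-derivative-bis} and \eqref{embedding}) is what makes the change of variables and the cancellations legitimate, and I would invoke it explicitly at each substitution.
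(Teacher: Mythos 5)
Your plan is correct and is essentially the paper's own proof, which disposes of (i) by a single integration by parts and of (ii)--(iii) by unwinding the definitions $\frac{\d f}{\d \gm}=f'/\|\gm'\|$ and $\d\gm=\|\gm'\|\,\d s$ so that the $\|\gm'\|$ factors cancel. The only superfluous worry is the ``density'' check you flag for (i): after integrating by parts, the supremum $\sup\bigl\{\int_{\Circ}\frac{\d f}{\d \gm}\cdot\frac{\d g}{\d \gm}\,\d\gm \,:\, g\in C^\infty,\ \|g\|_{L^\infty}\leq 1\bigr\}$ is \emph{literally} the definition \eqref{TV} of $TV_\gm\bigl(\frac{\d f}{\d \gm}\bigr)$, so no enlargement of the test class is needed.
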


\begin{proof} $(i)$ follows by integrating by parts. $(ii)$ follows from the definition of the derivative with respect to $\d \gm$ and \eqref{TV}. $(iii)$ follows from $(ii)$ and the definition of the derivative $\d/\d\gm$.
\end{proof}

Moreover, analogously  to Lemma 2.13 in \cite{Bruveris}, we have the following Poincar\'e inequality. The proof is similar to Lemma 2.13 in \cite{Bruveris}. 

\begin{lem} For every $f\in BV(\gm)$ it holds
\begin{equation}\label{embedding-gm}	
 \norm{f}_{L^\infty(\Circ,\R^2)} \leq  \frac{1}{\len(\gm)}\int_{\Circ} f\,\d \gm  + TV_\gm\left(  \frac{\d f}{\d \gm} \right)\,.
\end{equation}
\end{lem}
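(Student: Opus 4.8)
The plan is to derive the inequality from the fundamental theorem of calculus applied to $f$ viewed as a $BV$-function of the single variable $s\in\Circ$, followed by an averaging over the base point. This is the $BV$-analogue of the classical proof of the embedding $H^1(\Circ)\hookrightarrow L^\infty(\Circ)$, and it follows the scheme of Lemma 2.13 in \cite{Bruveris}. Since $f\in BV(\gm)$, Lemma~\ref{equiv}(ii) gives $TV_\gm(f)=|Df|(\Circ)<\infty$, so $f$ is a genuine $BV(\Circ,\R^2)$-function; in particular, after lifting $f$ to an element of $BV_{loc}(\R,\R^2)$, it admits a representative with left and right limits at every point, it belongs to $L^\infty(\Circ,\R^2)$ by~\eqref{embedding}, and for this representative one has the identity $f(s)-f(t)=Df(\,]t,s]\,)$ for all $s,t\in\Circ$, where $]t,s]$ denotes the positively oriented arc from $t$ to $s$. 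Consequently $\|Df(\,]t,s]\,)\|\leq|Df|(\Circ)=TV_\gm(f)$ for every $s,t$.

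Next I would integrate the identity $f(s)=f(t)+Df(\,]t,s]\,)$ in the variable $t$ against the probability measure $\d\gm(t)/\len(\gm)$ on $\Circ$, using $\int_\Circ\d\gm=\len(\gm)$ (which follows from \eqref{length-def} and the definition of $\d\gm$). This gives
$$f(s)=\frac{1}{\len(\gm)}\int_\Circ f\,\d\gm+\frac{1}{\len(\gm)}\int_\Circ Df(\,]t,s]\,)\,\d\gm(t).$$
Taking norms and bounding the last integrand by $TV_\gm(f)$ pointwise in $t$ yields
$$\|f(s)\|\leq\frac{1}{\len(\gm)}\Bigl\|\int_\Circ f\,\d\gm\Bigr\|+TV_\gm(f)\qquad\text{for a.e. }s\in\Circ,$$
and taking the essential supremum over $s\in\Circ$ gives the asserted bound; by Lemma~\ref{equiv}(iii) the second term can equivalently be written $\bigl\|\tfrac{\d f}{\d\gm}\bigr\|_{L^1(\gm)}$.

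The only genuine subtlety is making the fundamental theorem of calculus on the circle rigorous for merely $BV$ (rather than $W^{1,1}$) data: one must fix the precise representative of $f$, keep careful track of the orientation of the arc $]t,s]$ when passing to a lift on $\R$, and verify the joint measurability in $(t,s)$ needed to integrate in $t$ and apply Fubini. Since the set of atoms of $Df$ is countable, the pointwise identity holds for a.e. pair $(s,t)$ with unambiguous Lebesgue values, which is enough to integrate in $t$ on a set of full measure and obtain the displayed identity for a.e.\ $s$; once $f$ is lifted to $BV_{loc}(\R,\R^2)$ all of this is standard, so the argument is essentially the same as in \cite{Bruveris}, the remainder being bookkeeping with the elementary facts $\int_\Circ\d\gm=\len(\gm)$ and $TV_\gm(f)=|Df|(\Circ)$.
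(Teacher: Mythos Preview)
Your argument is correct and is precisely the approach the paper refers to (Lemma~2.13 in \cite{Bruveris}): apply the fundamental theorem of calculus for $BV$ functions on the circle and average the resulting identity over the base point against the probability measure $\d\gm/\len(\gm)$. One remark: you correctly obtain $TV_\gm(f)$ on the right-hand side, whereas the paper's displayed formula has $TV_\gm\bigl(\tfrac{\d f}{\d\gm}\bigr)$; this is evidently a typo in the paper, since that quantity is not even defined for a general $f\in BV(\gm)$, and the sole application of~\eqref{embedding-gm} (in the proof of Lemma~\ref{no-collapse}, to $f=\tfrac{\d\GA_t}{\d\GA}$) uses exactly the version you proved.
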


We can now define the norm on the tangent space to $\Ba$.

\begin{defn}[{\bf norm on the tangent space}] 

For every $\gm\in \Bb$, the tangent space at $\gm$ to $\Bb$, which is equal to $\BVd$ is endowed with the (equivalent) norm of the space
 $$BV^2(\gm)\,=\,BV^2(\Circ,\R^2;\d\gm)$$
introduced in Definition \ref{dgm}. 
More precisely, the $BV^2(\gm)$-norm is defined by
$$\|f\|_{BV^2(\gm)} = \int_{\Circ}\|f\|\|\gm'\|\,\d s + \int_{\Circ}\|f'\|\,\d s +  TV_\gm^2\left(f\right) \quad\forall\, f\in BV^2(\gm)\,.$$
Finally, we recall that 
\begin{equation}\label{change-arc}
\|f\|_{BV^2(\gm)}= \|f\circ\phi_\gm\|_{BV^2(\Circ, \R^2)}^{\gm}\,,
\end{equation}
where
$$
 \displaystyle{\|f\|_{BV^2(\Circ, \R^2)}^{\gm} = \len(\gm)\|f\|_{L^1(\Circ, \R^2)}+ \|f'\|_{L^1(\Circ, \R^2)}+\frac{1}{\len(\gm)}|D f|(\Circ) \quad \forall f\in BV^2(\Circ, \R^2)} \,.
 $$

\end{defn}

\begin{rem}[{\bf weighted norms}] Similarly to \cite{Bruveris}, we could consider some weighted $BV^2$-norms, defined as
$$\|f\|_{\BVd} = a_0\|f\|_{L^1(\Circ,\R^2)} +a_1\|f'\|_{L^1(\Circ,\R^2)} + a_2|D^2 f|(\Circ)\,,$$
where $a_i>0$ for $i=1,2,3$.
We can define the norm on the tangent space by the same constants.

One can easily satisfy that our results can be generalized to such a framework. In fact, this weighted norm is equivalent to the classical one and  the positive constants do not affect the bounds and the convergence properties that we prove in this work.
\end{rem}

The following proposition proves that $BV^2(\gm)$ and $\BVd$ represent the same space of functions with equivalent norms. 

\begin{prop}\label{equiv-norms}
Let $\gm \in \Ba$. 
The sets $BV^2(\gm)$ and $BV^2(\Circ,\R^2)$ coincide and their norms are equivalent.
More precisely, there exist two positive constants $M_\gm, m_\gm$  such that, for all $f \in BV^2(\Circ,\R^2)$
\eql{\label{eq-equi-norm}
	m_\gm \norm{f}_{BV^2(S^1,\RR^2)} \leq  \norm{f}_{BV^2(\gm)} \leq M_\gm\norm{f}_{BV^2(S^1,\RR^2)} \,.
}
\end{prop}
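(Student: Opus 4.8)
The plan is to establish the two-sided bound \eqref{eq-equi-norm} term by term, relating each of the three pieces of $\|f\|_{BV^2(\gm)}$ to the corresponding piece of $\|f\|_{BV^2(S^1,\RR^2)}$, using that $\gm\in\Ba$ has $\|\gm'\|$ bounded above and below by positive constants. Concretely, set
\eq{
	\varepsilon_\gm = \underset{s\in\Circ}{\mbox{essinf}}\,\|\gm'(s)\| > 0, \qquad
	L_\gm = \|\gm'\|_{L^\infty(\Circ,\RR^2)} < \infty,
}
where positivity of $\varepsilon_\gm$ comes from \eqref{cond-derivative-bis} (and the Proposition that \eqref{cond-derivative}$\Leftrightarrow$\eqref{cond-derivative-bis}), and finiteness of $L_\gm$ from the embedding $\BVd\hookrightarrow W^{1,\infty}$. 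First I would handle the zeroth-order term: since $\d\gm(s)=\|\gm'(s)\|\,\d s$ with $\varepsilon_\gm\le\|\gm'(s)\|\le L_\gm$, we get immediately $\varepsilon_\gm\|f\|_{L^1(\Circ,\RR^2)}\le\int_\Circ\|f\|\,\d\gm\le L_\gm\|f\|_{L^1(\Circ,\RR^2)}$. The first-order term $\int_\Circ\|f'\|\,\d s$ is literally common to both norms, so it needs no estimate. It remains to compare $TV_\gm^2(f)$ with $|D^2 f|(\Circ)$.

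For the second-order term I would use Lemma~\ref{equiv}: $TV_\gm^2(f)=TV_\gm\!\big(\tfrac{\d f}{\d\gm}\big)=\big|D\big(\tfrac{f'}{\|\gm'\|}\big)\big|(\Circ)$, so the task reduces to bounding $\big|D\big(\tfrac{f'}{\|\gm'\|}\big)\big|(\Circ)$ above and below by multiples of $|Df'|(\Circ)=|D^2f|(\Circ)$. This is where the $BV$ chain/product rule for the Banach algebra $BV(\Circ)$ (quoted after the definition of $BV^2$-functions, via \cite{AFP} Thm.~3.96) enters: write $\tfrac{f'}{\|\gm'\|}$ as a product of $f'\in BV$ with $\tfrac{1}{\|\gm'\|}\in BV$, the latter being $BV$ because $\|\gm'\|$ is $BV$, bounded below by $\varepsilon_\gm>0$, and $t\mapsto 1/t$ is Lipschitz on $[\varepsilon_\gm,\infty)$. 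The product rule then gives $|D(f'\cdot g)|(\Circ)\le \|g\|_{L^\infty}|Df'|(\Circ)+\|f'\|_{L^\infty}|Dg|(\Circ)$ with $g=1/\|\gm'\|$; combining with the embedding \eqref{embedding} to bound $\|f'\|_{L^\infty}\le\|f'\|_{BV}\le\|f\|_{BV^2}$ yields the upper bound $TV_\gm^2(f)\le C_\gm\|f\|_{BV^2(S^1,\RR^2)}$. For the lower bound, apply the same reasoning in reverse: $f'=(\|\gm'\|)\cdot\tfrac{f'}{\|\gm'\|}$ with $\|\gm'\|\in BV$ bounded below, so $|Df'|(\Circ)\le \|\,\|\gm'\|\,\|_{L^\infty}\,|D\tfrac{f'}{\|\gm'\|}|(\Circ) + \|\tfrac{f'}{\|\gm'\|}\|_{L^\infty}|D\|\gm'\|\,|(\Circ)$, and $\|\tfrac{f'}{\|\gm'\|}\|_{L^\infty}\le\tfrac{1}{\varepsilon_\gm}\|f'\|_{L^\infty}$ is controlled by the lower-order terms of $\|f\|_{BV^2(\gm)}$ (again via \eqref{embedding}). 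Assembling the three pieces, with $M_\gm$ and $m_\gm$ depending only on $\varepsilon_\gm$, $L_\gm$, $|D\|\gm'\|\,|(\Circ)$ and $\len(\gm)$, gives \eqref{eq-equi-norm}, and in particular shows $BV^2(\gm)=BV^2(\Circ,\RR^2)$ as sets.

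The main obstacle is the second-order comparison: one must be careful that $1/\|\gm'\|$ genuinely lies in $BV(\Circ)$ and control its total variation in terms of $|D^2\gm|(\Circ)$ and $\varepsilon_\gm$ — this uses that composition of a $BV$ function with a Lipschitz function stays $BV$ with the expected variation bound — and that the product rule is applied to the right factors so that the cross terms are absorbed by lower-order norms rather than producing a circular estimate. The zeroth- and first-order terms are routine; the bookkeeping of constants is the only remaining chore, and since $\gm$ is fixed throughout, all constants are harmless finite positive numbers.
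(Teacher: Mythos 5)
Your proposal is correct and follows essentially the same route as the paper: term-by-term comparison, with the zeroth-order term controlled by $\mathrm{essinf}\,\|\gm'\|$ and $\|\gm'\|_{L^\infty}$, the first-order term identical in both norms via Lemma~\ref{equiv}, and the second-order term reduced to comparing $\bigl|D\bigl(\tfrac{f'}{\|\gm'\|}\bigr)\bigr|(\Circ)$ with $|D^2f|(\Circ)$ using the $BV$ Banach-algebra/chain-rule machinery and the bounds on $\|\gm'\|$. If anything, your use of the genuine Leibniz estimate with $L^\infty$ factors (absorbing the cross terms into the lower-order parts of the norm) is stated more carefully than the paper's multiplicative bound on the two variations, but the decomposition and key ingredients coincide.
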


\begin{proof}
We suppose that $f$ is not equal to zero. For the $L^1$-norms of $f$, the result follows from~\eqref{cond-derivative-bis} and the  constants are given respectively by 
$$M_\gm^0 = \|\gm'\|_{L^\infty(\Circ,\RR^2)}\;, \quad m_\gm^0=\underset{s\in\Circ}{\mbox{essinf}}\,\|\gm'(s)\|.$$
Moreover, by Lemma \ref{equiv} $(iii)$, the $L^1(\gm)$ and $L^1(\Circ,\RR^2)$-norms of the respective first derivative coincide. So it is sufficient to obtain the result for the second variation  of $f \in BV^2(\gm)$.

By integration by parts, we have
$$	
	\int_{\Circ} f\cdot \frac{\d^2 g}{\d \gm^2}(s)\, \d\gm = 
	\int_{\Circ} \frac{f'}{\|\gm'\|}g'\, \d s  \,
$$
where we used the fact that $\frac{\d g}{\d \gm}=\frac{g'}{\|\gm'\|}$. 
This  implies in particular that 
\begin{equation}\label{Smoothness}TV^2_\gm(f) = \left|D \frac{f'}{\|\gm'\|}\right|(\Circ)\,.\end{equation}
Since $\frac{1}{\|\gm'\|} \in BV(\Circ,\RR^2)$ and $BV(\Circ,\RR^2)$ is 
a Banach algebra, we get 
$$
	\left|D \frac{f'}{\|\gm'\|}\right|(\Circ) \leq 
	|D f'|(\Circ) \left|D \frac{1}{\|\gm'\|} \right|(\Circ).
$$ 
Now, as $|Df'|(\Circ) =  |D^2f|(\Circ)$, applying the chain rule for $BV$-functions to $\left|D \frac{1}{\|\gm'\|}\right|(\Circ)$, we can set 
$$
	M_\gm^2=   \|\gm'\|_{BV(\Circ,\RR^2)}/\underset{s\in\Circ}{\mbox{essinf}}\,\|\gm'(s)\|^2\,.
$$
On the other hand, we have
$$
	\int_{\Circ} f'g'\, \d s  = 
	\int_{\Circ}\frac{\d f}{\d \gm} \frac{\d g}{\d \gm}\,\|\gm'\| \d\gm   \,
$$
so that
$$
	|D f'|(\Circ) = TV_\gm\left(\frac{\d f}{\d \gm} \|\gm'\| \right)\,,
$$
and, because of Lemma \ref{equiv}$(i)$, we get
\begin{equation}\label{m_GA}
	|D^2 f|(\Circ)\leq TV_\gm(\|\gm'\|)TV^2_\gm(f) \,.
\end{equation}
Therefore, by the chain rule for $BV$-functions, the result is proved by taking the constant
$$
m_\gm^2= \frac{1}{\|\gm'\|_{BV(\Circ,\RR^2)}} \, \cdot
$$
The lemma ensues setting
\begin{equation}\label{equiv-down}
\begin{array}{lll}
	M_\gm & = &\max \,\{M_\gm^0,M_\gm^2\}   
	=  \max \,\left\{\|\gm'\|_{L^\infty(\Circ,\RR^2)}\,,\;  
	\|\gm'\|_{BV(\Circ,\RR^2)}/\underset{s\in\Circ}{\mbox{essinf}}\,\|\gm'(s)\|^2\right\}\,,\\
 m_\gm & = & \min \,\{m_\gm^0,m_\gm^2\}  =  \min \,\left\{\underset{s\in\Circ}{\mbox{essinf}}\,\|\gm'(s)\|\,,\;1/\|\gm'\|_{BV(\Circ,\RR^2)}\right\}\,.
 \end{array}
\end{equation}

\end{proof}

\subsection{Paths Between $BV^2$-Immersed Curves and Existence of Geodesics}
\label{existence_geod}

In this section, we define the set of admissible paths between two $BV^2$-immersed curves and  a $BV^2$ Finsler metric on $\Bb$. In particular we prove that a minimizing geodesics for the defined Finsler metric exists for any given couple of curves.

\begin{defn}[{\bf paths in $\Ba$}]
For every $\gm_0,\gm_1\in \Ba$, we define a path in $\Ba$ joining  $\gm_0$ and $\gm_1$ as a function 
$$
	\GA: t\in [0,1] \mapsto \GA(t) \in \Ba \quad \forall t\in [0,1]
$$
such that
\begin{equation}\label{initial_conditions} 
 	\GA(0) = \gm_0\quad \GA(1) = \gm_1\,.
\end{equation}
For every $\gm_0,\gm_1\in \Ba$, we denote $\mathcal{P}(\gm_0,\gm_1)$ the class of all paths joining $\gm_0$ and $\gm_1$, belonging to $H^1([0,1],\BVd)$, and such that $\GA(t)\in\Bb$ for every $t\in [0,1]$. 

We recall that $H^1([0,1],\BVd)$ represents the set of $\BVd$-valued functions whose derivative belongs to $L^2([0,1],\BVd)$. We refer to \cite{Iounesco} for more details about Bochner space of Banach-valued functions. It holds in particular  
\begin{equation}\label{flow}
	\forall \, s\in \Circ, \quad
	\int_0^1 \GA_t(t)(s) \d t = \gm_1(s)-\gm_0(s)\,,
\end{equation}
where $\GA_t$ denotes the derivative of $\GA$ with respect to $t$. In the following $\GA'(t)$ denotes the derivative of the curve $\GA(t)\in \BVd$ with respect to $s$. Finally, for every $t$ and for every $s$, it holds
\begin{equation}\label{t-curve-der}
\GA(t)(s) = \int_0^t \GA_\tau(\tau)(s) \d \tau +\gm_0(s)
\,,\quad \quad
\GA'(t)(s) = \int_0^t \GA_\tau'(\tau)(s) \d \tau +\gm_0'(s)\,.
\end{equation}
\end{defn}

\begin{defn}[{\bf geodesic paths in $\Ba$}]\label{defn-geodesic-paths}
 For every path $\GA$ we consider the following energy 
\begin{equation}\label{energy-bv2}
	E(\GA)=\int_0^1 \|\GA_t(t)\|_{BV^2(\GA(t))}^2\,\d t.
\end{equation}
The geodesic distance between $\gm_0$ and $ \gm_1$ is denoted by $d(\gm_0,\gm_1)$ and defined by 
\begin{equation}\label{problem}
	d^2(\gm_0,\gm_1) = {\rm inf}\enscond{ E(\GA) }{ 
		\GA\in \mathcal{P}(\gm_0,\gm_1) 
	}\,.
\end{equation}
A geodesic between $\gm_0$ and $\gm_1$ is a path $\tilde{\GA}\in \mathcal{P}(\gm_0,\gm_1)$ such that
$$	
	E(\tilde{\GA}) = d^2(\gm_0,\gm_1).
$$
\end{defn}

Note that because of the lack of smoothness of the $BV^2$-norm over the tangent space, it is not possible to define an exponential map. Geodesics should thus be understood as paths of minimal length. Recall that the existence of (minimizing) geodesics is not guaranteed in infinite dimensions. 

\begin{rem}[{\bf time reparameterization and geodesic energy}]\label{time-param} 
We point out that, as in  Remark~\ref{arc-len}, we can reparameterize  every non-trivial  homotopy $\GA$ (i.e. satisfying $E(\GA) \neq 0$) with respect to the time-constant speed parameterization, defined as the inverse of the following parameter:
$$t_\GA : [0,1] \rightarrow [0,1]$$
$$\foralls t\in [0,1], \quad 
	t_\GA(t) =\frac{1}{E_1(\GA)} \int_{0}^t \,\|\GA_\tau(\tau)\|_{BV^2(\GA(\tau))} \, \d\tau\,,$$
	where 
	$$E_1(\GA)=\int_{0}^1 \,\|\GA_\tau(\tau)\|_{BV^2(\GA(\tau))} \, \d\tau\,. $$
In the following we show the link between the $L^1$ and $L^2$ geodesic energies via a time reparameterization.

Note that, we can suppose that there is no interval $I\subset [0,1]$ such that 	$\|\GA_t(t)\|_{BV^2(\GA(t))}= 0$ a.e. on $I$. Otherwise,  we can always consider, by a  reparameterization,  the homotopy $\tilde{\GA}$ such that $\tilde{\GA}([0,1]) =\GA_{|[0,1]\setminus I}([0,1])$, which  is such that  $E(\GA)=E(\tilde{\GA})$ and $t_{\tilde{\GA}}$ is strictly increasing. 

Then, up to such a reparameterization, we assume that  $t_\GA$ is a strictly monotone (increasing) continuous function from $[0,1]$ onto $[0,1]$ so it is invertible and  we can define the time-constant speed parameterization $t_\GA^{-1}$. Now, if the homotopy is parameterized  with respect to such a  parameter then it satisfies
\begin{equation}\label{time-vel}
	\|\GA_t(t)\|_{BV^2(\GA(t))} = E_1(\GA)\quad \foralls t\in [0,1]\,.
\end{equation} 
In particular, for a generic homotopy $\GA$,  we have 
$$(E_1(\GA))^2= E(\GA\circ t_{\GA}^{-1})\,.$$
This implies  that the minimizers of $E$ satisfy \eqref{time-vel} and coincide with the time-constant speed reparameterizations of the  minimizers of $E_1$.
 This  justifies  the definition of the geodesic energy by $E$ instead of $E_1$ that formally represents the length of the path.
We refer to  \cite[Theorem 8.18  and Corollary 8.19, p.175]{Younes-book}  for more details.
\end{rem}

We prove now that the constants  $m_{\GA(t)}$ and $M_{\GA(t)}$ defined in \eqref{equiv-down} are uniformly bounded on minimizing paths.
To this end we need the following lemma.

\begin{lem}\label{no-collapse}
Let $\GA\in \mathcal{P}(\gm_0,\gm_1)$. Then the following properties hold:
\begin{enumerate}
\item The function
$$t\mapsto g(t)= \|\GA'(t)\|_{L^\infty(\Circ,\RR^2)}$$
belongs to $C([0,1], \R)$, so, in particular, it admits a  maximum and a positive minimum on $[0,1]$. Similarly,  the  functions $t\mapsto \len(\GA(t))$ and $t\mapsto \|\GA'(t)(s)\|$ (for a.e. $s\in \Circ$) are also  continuous. 
\item For every $t\in [0,1]$ we have
\begin{equation}\label{length}
\len(\gm_0)e^{ -E(\GA)} \leq \len(\GA(t)) \leq \len(\gm_0)e^{E(\GA)}\,,
\end{equation}
and for a.e. $s\in \Circ$ and  for every $t\in [0,1]$, we have \begin{equation}\label{length2}	(\underset{s\in \Circ}{\rm{essinf}}\,\|\gm_0'(s)\|)e^{-  E(\GA)}	\leq \|\GA'(t)(s)\|\leq 	\|\gm_0'\|_{\leb{\infty}}e^{  E(\GA)}\,.\end{equation}

\end{enumerate}
\end{lem}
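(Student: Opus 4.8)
The plan is to establish part (1) first, then use a Gronwall-type argument for part (2). For the continuity of $t\mapsto \|\GA'(t)\|_{L^\infty}$, $t\mapsto\len(\GA(t))$ and $t\mapsto\|\GA'(t)(s)\|$, I would start from the fact that $\GA\in H^1([0,1],\BVd)$, so that $t\mapsto\GA(t)$ is (absolutely) continuous from $[0,1]$ into $\BVd$, and hence, composing with the continuous embedding $\BVd\hookrightarrow W^{1,\infty}(\Circ,\R^2)$ of the Embedding Proposition, $t\mapsto\GA'(t)$ is continuous from $[0,1]$ into $L^\infty(\Circ,\R^2)$. Continuity of $g(t)=\|\GA'(t)\|_{L^\infty(\Circ,\R^2)}$ then follows from the reverse triangle inequality $|g(t)-g(\tau)|\leq\|\GA'(t)-\GA'(\tau)\|_{L^\infty}$; continuity of $\len(\GA(t))=\int_\Circ\|\GA'(t)(s)\|\,\d s$ follows since $L^\infty$-convergence on the compact $\Circ$ implies $L^1$-convergence of $\|\GA'(t)(\cdot)\|$; and continuity of $t\mapsto\|\GA'(t)(s)\|$ for a.e.\ $s$ follows from the pointwise formula in \eqref{t-curve-der}, namely $\GA'(t)(s)=\int_0^t\GA_\tau'(\tau)(s)\,\d\tau+\gm_0'(s)$, with $\GA_\tau'\in L^2([0,1],L^\infty)$ so that for a.e.\ $s$ the integrand is in $L^1_t$. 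Since $g$ is continuous on the compact $[0,1]$ it attains a max and a min, and the min is positive because $\GA(t)\in\Bb$ for every $t$, i.e.\ $\|\GA'(t)(s)\|>0$ for a.e.\ $s$, forcing $g(t)>0$ for all $t$.

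For part (2), the key is a differential inequality for $\ell(t):=\len(\GA(t))$ and for $v(t,s):=\|\GA'(t)(s)\|$. Differentiating under the integral sign, $\ell'(t)=\int_\Circ\frac{\GA'(t)(s)}{\|\GA'(t)(s)\|}\cdot\GA_t'(t)(s)\,\d s$, hence
\[
|\ell'(t)|\leq\int_\Circ\|\GA_t'(t)(s)\|\,\d s=\|\GA_t'(t)\|_{L^1(\Circ,\R^2)}.
\]
Now I bound the right-hand side by the tangent norm: by Lemma~\ref{equiv}(iii), $\|\GA_t'(t)\|_{L^1(\Circ,\R^2)}=TV_{\GA(t)}\!\left(\frac{\d\GA_t(t)}{\d\GA(t)}\right)=TV_{\GA(t)}(\GA_t(t))\cdot$\,(correcting: $\|\GA_t'(t)\|_{L^1}=TV_{\GA(t)}(\GA_t(t))$ is exactly the middle term of the $BV^2(\GA(t))$-norm, hence $\leq\|\GA_t(t)\|_{BV^2(\GA(t))}$). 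Therefore $|\ell'(t)|\leq\|\GA_t(t)\|_{BV^2(\GA(t))}$, but a cleaner route is to compare with $\ell(t)$ directly: using $\|\GA_t'(t)\|_{L^1(\Circ,\R^2)}\leq\|\GA_t(t)\|_{BV^2(\GA(t))}$ and then Cauchy--Schwarz in $t$ would only give an $L^2$ bound; instead I keep the pointwise-in-$t$ inequality $|\,(\log\ell)'(t)\,|\leq\|\GA_t'(t)\|_{L^1}/\ell(t)$ and observe that $\|\GA_t'(t)\|_{L^1}/\len(\GA(t))$ is not obviously $\leq\|\GA_t(t)\|_{BV^2(\GA(t))}$ — so the honest bound is $|(\log\ell)'(t)|\leq \|\GA_t(t)\|_{BV^2(\GA(t))}^2\cdot$(something) or, more simply, $\frac{d}{dt}\log\ell(t)$ is controlled by $\|\GA_t(t)\|_{BV^2(\GA(t))}$ via the $L^1(\GA(t))$-normalization built into the tangent norm. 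Concretely: $\|\GA_t'(t)\|_{L^1(\Circ,\R^2)}$ is the second term of $\|\GA_t(t)\|_{BV^2(\GA(t))}$, which is $\leq \|\GA_t(t)\|_{BV^2(\GA(t))}$ whenever the latter is $\geq 1$, and $\leq \|\GA_t(t)\|_{BV^2(\GA(t))}^{1/2}$ otherwise; in both cases $|(\log\ell)'(t)|\leq \ell(t)^{-1}\max(1,\|\GA_t(t)\|_{BV^2(\GA(t))}^2)$. Integrating over $[0,t]$ and using $\int_0^1\|\GA_t(t)\|_{BV^2(\GA(t))}^2\,\d t=E(\GA)$ gives $|\log\ell(t)-\log\len(\gm_0)|\leq E(\GA)$ (absorbing the $\ell(t)^{-1}$ and constants into the statement's form), which is \eqref{length}. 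The same computation applied to $v(t,s)=\|\GA'(t)(s)\|$ using $|\partial_t v(t,s)|\leq\|\GA_t'(t)(s)\|$ and integrating the pointwise inequality $|\partial_t\log v(t,s)|\leq \|\GA_t'(t)(s)\|/v(t,s)$ in $t$, then taking essinf (resp.\ esssup) over $s$, yields \eqref{length2}.

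The main obstacle — and the point I would be most careful about — is making the passage from $\|\GA_t'(t)\|_{L^1(\Circ,\R^2)}$ (and its pointwise analogue $\|\GA_t'(t)(s)\|$) to the tangent norm $\|\GA_t(t)\|_{BV^2(\GA(t))}$ genuinely clean, so that Gronwall produces exactly the stated exponential $e^{\pm E(\GA)}$. The cleanest version uses that $\|\GA_t(t)\|_{BV^2(\GA(t))}$ dominates $\int_\Circ\|\GA_t'(t)(s)\|\,\d s = \|\GA_t'(t)\|_{L^1(\Circ,\R^2)}$ directly (it is literally the middle summand), so $|(\log\len\circ\GA)'(t)| = \len(\GA(t))^{-1}|\ell'(t)| \leq \len(\GA(t))^{-1}\|\GA_t(t)\|_{BV^2(\GA(t))}$; but the $\len(\GA(t))^{-1}$ factor is itself what we are trying to bound, so one should set $u(t)=\log\len(\GA(t))$, note $|u'(t)|\leq e^{-u(t)}\|\GA_t(t)\|_{BV^2(\GA(t))}$, which is not a linear Gronwall inequality. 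The resolution is to instead bound $|\ell'(t)|\leq\|\GA_t'(t)\|_{L^1}\leq\|\GA_t(t)\|_{BV^2(\GA(t))}$ and note that actually in the intended normalization the relevant quantity is $\|\GA_t(t)\|_{BV^2(\GA(t))}$ compared with $\len(\GA(t))$ \emph{inside} the norm — i.e.\ the first summand of $\|\GA_t(t)\|_{BV^2(\GA(t))}$ is $\int_\Circ\|\GA_t(t)\|\|\GA'(t)\|\,\d s = \len(\GA(t))\cdot(\text{weighted avg of }\|\GA_t(t)\|)$. I expect that tracking the correct dimensionless combination, à la Lemma 2.13 / the length estimate in \cite{Bruveris}, and invoking $2ab\leq a^2+b^2$ to pass from an $L^1$-in-time bound to the quadratic energy $E(\GA)$, is exactly the technical heart; everything else is routine.
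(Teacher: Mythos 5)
Part 1 of your proposal is fine and is essentially the paper's argument. The gap is in part 2. Your chain $\bigl|\tfrac{\d}{\d t}\len(\GA(t))\bigr| \le \|\GA_t'(t)\|_{L^1(\Circ,\RR^2)} \le \|\GA_t(t)\|_{BV^2(\GA(t))}$ is true but too lossy: to reach \eqref{length} you must divide by $\len(\GA(t))$, and the leftover factor $\len(\GA(t))^{-1}$ is precisely the quantity the lemma is meant to control, so ``absorbing the $\ell(t)^{-1}$ and constants into the statement's form'' is not a proof step (the statement is exactly $e^{\pm E(\GA)}$, with no room for constants, and your $\max(1,\cdot)$ device would in any case add a spurious additive term to the exponent). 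The same problem recurs for \eqref{length2}: you correctly reduce to $\bigl|\partial_t \log\|\GA'(t)(s)\|\bigr| \le \|\GA_t'(t)(s)\|/\|\GA'(t)(s)\| = \bigl\|\tfrac{\d \GA_t(t)}{\d \GA(t)}(s)\bigr\|$, but you never show that the supremum in $s$ of this quantity has time integral bounded by $E(\GA)$; that is asserted, and it is the heart of the lemma.

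What the paper does, and what is missing from your argument, is twofold. First, do not pass to $\|\GA_t'(t)\|_{L^1}$: keep the quotient inside, $\partial_t\len(\GA(t)) = \int_\Circ \langle \GA_t'(t),\GA'(t)\rangle/\|\GA'(t)\|\,\d s \le \bigl\|\tfrac{\d\GA_t(t)}{\d\GA(t)}\bigr\|_{L^\infty}\,\len(\GA(t))$, so the logarithmic derivative is bounded by the scale-invariant field $\tfrac{\d\GA_t(t)}{\d\GA(t)}$; since this field has zero average against $\d\GA(t)$ (it integrates to $\int_\Circ \GA_t'(t)\,\d s = 0$ by periodicity), the Poincar\'e inequality \eqref{embedding-gm} gives $\bigl\|\tfrac{\d\GA_t(t)}{\d\GA(t)}\bigr\|_{L^\infty} \le TV_{\GA(t)}\bigl(\tfrac{\d\GA_t(t)}{\d\GA(t)}\bigr) = TV^2_{\GA(t)}(\GA_t(t))$ by Lemma~\ref{equiv}(i), which is a summand of $\|\GA_t(t)\|_{BV^2(\GA(t))}$ --- crucially with no curve-dependent constant (invoking the equivalence constants $m_\gm, M_\gm$ here would be circular). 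Second, the passage from this bound, which is linear in the velocity, to the quadratic energy $E(\GA)$ is not done via $2ab\le a^2+b^2$ or a case split on whether the norm exceeds $1$, but via the time-constant-speed reparameterization of Remark~\ref{time-param}: property \eqref{time-vel} makes $\|\GA_t(t)\|_{BV^2(\GA(t))}$ constant in $t$ and controlled by the energy, while leaving the curves $\GA(t)$, hence their lengths and the conclusions \eqref{length}--\eqref{length2}, unchanged. Integrating the resulting inequalities $\partial_t\log\len(\GA(t))\le E(\GA)$ and $\partial_t\log\|\GA'(t)(s)\|\le E(\GA)$ (the latter pointwise for a.e.\ $s$, using \eqref{t-curve-der} for differentiability in $t$) then gives exactly the stated exponentials.
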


\begin{proof}
1. By Definition~\ref{defn-geodesic-paths}, every $\GA\in \mathcal{P}(\gm_0,\gm_1)$ belongs  to $H^1([0,1],\BVd)$ so, in particular, to $C([0,1],\BVd)$. Now, as $\BVd$ is embedded in $L^\infty(\Circ,\RR^2)$, we get the continuity of $g$. By a similar argument we get the continuity of the   functions $t\mapsto \len(\GA(t))$ and $t\mapsto \|\GA'(t)(s)\|$ (for a.e. $s\in \Circ$).


\medskip
2. We recall that $\GA(t)\in \Bb$ for every $t$ so that $\GA'(t)$ satisfies \eqref{cond-derivative} for every $t$. In particular the derivative $\GA'(t)$ is well defined a.e. on $\Circ$. By Remark~\ref{time-param} we can suppose that the time velocity satisfies \eqref{time-vel}. We have 
$$\frac{\partial \len(\GA(t))}{\partial t} = 
\int_{\Circ} \left\langle\frac{\GA_t'(t)}{\|\GA'(t)\|}, \GA'(t)\right\rangle\, \d s
\leq\left\|\frac{\d \GA_t(t)}{\d \GA(t)}\right\|_{L^\infty(\GA(t))}\len(\GA(t))\,,$$
and, as $\frac{\d \GA_t(t)}{\d \GA(t)}$ has null average, by \eqref{embedding-gm}, we have 
$$\left\|\frac{\d \GA_t(t)}{\d \GA(t)}\right\|_{L^\infty(\Circ,\RR^2)}\leq E(\GA)\,.$$
This implies
$$ \frac{\partial \log( \len(\GA(t)))}{\partial t}\leq E(\GA) \quad \forall \, t\in [0,1],$$
and, by integrating between $0$ and $t$, we get 
$$\len(\gm_0)e^{ -E(\GA)} \leq \len(\GA(t)) \leq \len(\gm_0)e^{ E(\GA)}\,. $$ 
For the second inequality we remark that, because of \eqref{t-curve-der}, $\|\GA'(t)(s)\|$ is differentiable with respect to $t$ for a.e. $s$. Then   $$\frac{\partial \|\GA'(t)(s)\|}{\partial t} = \left\langle\frac{\GA_t'(t)(s)}{\|\GA'(t)(s)\|}, \GA'(t)(s) \right\rangle \leq\left\|\frac{\d \GA_t(t)(s)}{\d \GA(t)}\right\|_{L^\infty(\Circ,\RR^2)}\|\GA'(t)(s)\|\,$$and, as above, we get  $$ \frac{\partial \log( \|\GA'(t)(s)\|)}{\partial t}\leq E(\GA) \quad 	\forall\,\mbox{a.e.}\; s\in \Circ\,\quad  \forall \, t\in [0,1].$$The result follows by integrating with respect to $t$. 
\end{proof}

\begin{prop}\label{global_bound_Mm} 
Let $\gm_0,\gm_1\in \Ba$. Then, for every  $\GA\in \mathcal{P}(\gm_0,\gm)$, there exist  two positive constants $C_1,C_2$  depending on $\gm_0$  (see \eqref{C1C2}) such that
\begin{equation}\label{constantsMm}
C_1e^{-2E(\GA)}\leq m_{\GA(t)}\leq  M_{\GA(t)}\leq C_2e^{2E(\GA)} \quad \forall \, t \in [0,1]\,,
\end{equation}
where  the constants $m_{\GA(t)}$ and $M_{\GA(t)}$ are defined in \eqref{equiv-down}.

\end{prop}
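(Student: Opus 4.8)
The plan is to reduce the two-sided estimate to three scalar bounds via the explicit formulas \eqref{equiv-down}. For $\gm=\GA(t)$ these read
\[
m_{\GA(t)}=\min\{\,\mbox{essinf}_s\|\GA'(t)(s)\|\,,\ 1/\|\GA'(t)\|_{BV(\Circ,\RR^2)}\,\},\qquad
M_{\GA(t)}=\max\{\,\|\GA'(t)\|_{L^\infty(\Circ,\RR^2)}\,,\ \|\GA'(t)\|_{BV(\Circ,\RR^2)}/(\mbox{essinf}_s\|\GA'(t)(s)\|)^2\,\},
\]
so it suffices to control, uniformly in $t\in[0,1]$ and by a constant depending only on $\gm_0$ times an exponential of $E(\GA)$: a lower bound for $\mbox{essinf}_s\|\GA'(t)(s)\|$ and upper bounds for $\|\GA'(t)\|_{L^\infty(\Circ,\RR^2)}$ and $\|\GA'(t)\|_{BV(\Circ,\RR^2)}$. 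The first two are immediate from Lemma~\ref{no-collapse}: the pointwise inequalities \eqref{length2} give $\mbox{essinf}_s\|\GA'(t)(s)\|\ge(\mbox{essinf}_s\|\gm_0'(s)\|)\,e^{-E(\GA)}$ and $\|\GA'(t)\|_{L^\infty(\Circ,\RR^2)}\le\|\gm_0'\|_{L^\infty(\Circ,\RR^2)}\,e^{E(\GA)}$, and since $\Circ$ has unit length the latter also bounds $\|\GA'(t)\|_{L^1(\Circ,\RR^2)}$. Hence only the second variation $a(t):=|D^2\GA(t)|(\Circ)$, the remaining summand of $\|\GA'(t)\|_{BV(\Circ,\RR^2)}$, requires work.

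For $a(t)$ the plan is a Gr\"onwall argument. Starting from the integral identity \eqref{t-curve-der}, $\GA'(t)=\gm_0'+\int_0^t\GA_\tau'(\tau)\,\d\tau$ as a Bochner integral in $BV(\Circ,\RR^2)$, and using that $|D\cdot|(\Circ)$ is subadditive and passes under the Bochner integral, I get $a(t)\le a(0)+\int_0^t|D^2\GA_\tau(\tau)|(\Circ)\,\d\tau$. Next I would bound the integrand by the local energy: combining \eqref{m_GA} applied with $\gm=\GA(\tau)$ and $f=\GA_\tau(\tau)$, Lemma~\ref{equiv}(ii), and the chain rule for $BV$ functions (the Euclidean norm being $1$-Lipschitz) gives
\[
|D^2\GA_\tau(\tau)|(\Circ)\le TV_{\GA(\tau)}(\|\GA'(\tau)\|)\,TV_{\GA(\tau)}^2(\GA_\tau(\tau))\le|D\|\GA'(\tau)\||(\Circ)\;\|\GA_\tau(\tau)\|_{BV^2(\GA(\tau))}\le a(\tau)\,\|\GA_\tau(\tau)\|_{BV^2(\GA(\tau))},
\]
where $TV_{\GA(\tau)}^2(\GA_\tau(\tau))\le\|\GA_\tau(\tau)\|_{BV^2(\GA(\tau))}$ because it is one of the three nonnegative summands of that norm. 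Thus $a(t)\le a(0)+\int_0^t a(\tau)\,\|\GA_\tau(\tau)\|_{BV^2(\GA(\tau))}\,\d\tau$, and Gr\"onwall's lemma together with the Cauchy--Schwarz bound $\int_0^1\|\GA_\tau(\tau)\|_{BV^2(\GA(\tau))}\,\d\tau\le\sqrt{E(\GA)}$ gives $a(t)\le|D^2\gm_0|(\Circ)\,e^{\sqrt{E(\GA)}}$. Inserting these three estimates into the formulas above for $m_{\GA(t)}$ and $M_{\GA(t)}$ and simplifying the exponents yields the claimed two-sided bound \eqref{constantsMm}, with $C_1,C_2$ depending only on $\|\gm_0'\|_{L^\infty(\Circ,\RR^2)}$, $\mbox{essinf}_s\|\gm_0'(s)\|$ and $|D^2\gm_0|(\Circ)$.

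The step I expect to be the main obstacle is this Gr\"onwall argument. First, there is a mild self-reference: the quantity $a(\tau)$ being estimated reappears on the right through $TV_{\GA(\tau)}(\|\GA'(\tau)\|)=|D\|\GA'(\tau)\||(\Circ)\le|D^2\GA(\tau)|(\Circ)$, so the differential inequality has to be set up in exactly the form that Gr\"onwall closes, and one must also verify that $D^2$ commutes with the Bochner integral and that $t\mapsto a(t)$ is bounded (which follows from $\GA\in C([0,1],\BVd)$). Second, matching the precise exponent appearing in \eqref{constantsMm} forces one to carry the sharper $L^1$-type control $E_1(\GA)=\int_0^1\|\GA_\tau(\tau)\|_{BV^2(\GA(\tau))}\,\d\tau\le\sqrt{E(\GA)}$ through the estimates rather than the quadratic energy, and then absorb numerical constants; the remaining bookkeeping with $\|\cdot\|_{BV^2(\gm)}$ and $\|\cdot\|_{BV^2(\Circ,\RR^2)}$ is routine.
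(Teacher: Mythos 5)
Your argument is essentially the paper's: the lower bound on the essential infimum of $\|\GA'(t)\|$ and the $L^\infty$ (hence $L^1$) bound are taken from Lemma~\ref{no-collapse}, and the remaining control of $\|\GA'(t)\|_{BV(\Circ,\RR^2)}$ comes from \eqref{m_GA} applied to $f=\GA_t$, the $BV$ chain rule and Gr\"onwall --- the paper normalizes to constant speed in time and runs Gr\"onwall on the full norm $\|\GA'(t)\|_{BV(\Circ,\RR^2)}$, whereas you run it on the seminorm $|D^2\GA(t)|(\Circ)$ with the time-dependent coefficient $\|\GA_\tau(\tau)\|_{BV^2(\GA(\tau))}$ and Cauchy--Schwarz, which is only a cosmetic variant. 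Your closing remark about carrying $E_1(\GA)\le\sqrt{E(\GA)}$ through all three estimates is indeed what is needed to land on the exponents $e^{\pm 2E(\GA)}$ of \eqref{constantsMm} (as written, combining the $e^{E(\GA)}$ bounds of \eqref{length2} with your $e^{\sqrt{E(\GA)}}$ Gr\"onwall bound gives only $e^{3E(\GA)}$ for the second term in the maximum defining $M_{\GA(t)}$), but this is the same bookkeeping the paper itself absorbs into the constants \eqref{C1C2}.
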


\begin{proof}
 
Up to a time reparameterization, we can suppose that the homotopy satisfies \eqref{time-vel}. Because of \eqref{length2}, we have 
\begin{equation}\label{global_length}
\begin{array}{ll}
	(\underset{s\in \Circ}{\rm{essinf}}\,\|\gm_0'(s)\|)e^{-E(\GA)} 
	\leq \underset{s\in\Circ}{\rm{essinf}}\,\|\GA'(t)(s)\| \,,\vspace{0.2cm}\\
	 \|\GA'(t)\|_{L^\infty(\Circ,\RR^2)} \leq 
	\|\gm_0'\|_{\leb{\infty}} e^{ E(\GA)}
\end{array}
\end{equation}
for every $t\in [0,1]$. By setting  $f(t) =\GA_t(t)$ in \eqref{m_GA}, we get 

\begin{equation}\label{m_GA_2}
	|D^2 \GA_t(t)|(\Circ)\leq TV_{\GA(t)}(\|\GA'(t)\|)E(\GA)\,.
\end{equation}

Thus
$$
\|\GA'(t)-\gm_0'\|_{BV(\Circ,\RR)}\, \leq  \,\int_0^t \,\|\GA'_{\tau}(\tau)\|_{BV(\Circ,\RR)} \d \tau \,=\, \int_0^t[\|\GA'_{\tau}(\tau)\|_{\leb{1}} + |D^2 \GA_{\tau}(\tau)|(\Circ) ] \d \tau $$
and, by \eqref{w11-rel}, 
\eqref{m_GA_2}, and \eqref{time-vel}, we have 
$$\|\GA'(t)-\gm_0'\|_{BV(\Circ,\RR)} \,\leq \, E(\GA) +   \int_0^t TV_{\GA(t)}(\|\GA'(t)\|)E(\GA) \d \tau \,.
$$
In particular, by the chain rule for $BV$-functions, we have
$$ TV_{\GA(t)}(\|\GA'(t)\|)\leq\, \|\GA'(t)\|_{BV(\Circ,\RR)}\,.$$

Then
$$\|\GA'(t)\|_{BV(\Circ,\RR)}\leq  \|\gm_0'\|_{BV(\Circ,\RR)}+E(\GA) +   \int_0^t \|\GA'(t)\|_{BV(\Circ,\RR)}E(\GA) \d \tau$$
and, by  Gronwall's inequality, we get 
\begin{equation}\label{bound-BV}
\|\GA'(t)\|_{BV(\Circ,\RR)}\leq (\|\gm_0'\|_{BV(\Circ,\RR)}+E(\GA))e^{E(\GA)}\,.
\end{equation} 
From \eqref{global_length} and \eqref{bound-BV}, it follows  that 
\begin{equation}\label{global_length_2}
\begin{array}{ll}
\displaystyle{\frac{e^{-E(\GA) }}{ (\|\gm_0'\|_{BV(\Circ,\RR)}+E(\GA))}	\leq \frac{1}{\|\GA'(t)\|_{BV(\Circ,\RR^2)}}}\,, \vspace{0.2cm}\\
\displaystyle{\frac{\|\GA'(t)\|_{BV(\Circ,\RR^2)}}{\underset{s\in\Circ}{\mbox{essinf}}\,\|\GA'(t)(s)\|^2} \leq 	\frac{(\|\gm_0'\|_{BV(\Circ,\RR)}+E(\GA))}{\underset{s\in \Circ}{\rm{essinf}}\,\|\gm_0'(s)\|^2}}e^{2E(\GA)}\,.
\end{array}
\end{equation}
The result follows from \eqref{global_length} and \eqref{global_length_2} by setting
\begin{equation}\label{C1C2}
\begin{array}{ll}
C_1= &\displaystyle{ \min\, \left\{ \underset{s\in \Circ}{\rm{essinf}}\,\|\gm_0'(s)\|\,,\,\frac{1}{ (\|\gm_0'\|_{BV(\Circ,\RR)}+E(\GA))}\right\} }\,,\vspace{0.2cm}\\

C_2=& \displaystyle{
\max \, 
\left\{ \|\gm_0'\|_{\leb{\infty}}\, ,\, \frac{(\|\gm_0'\|_{BV(\Circ,\RR)}+E(\GA))}{\underset{s\in \Circ}{\rm{essinf}}\,\|\gm_0'(s)\|^2} 
\right\}}\,.

\end{array}
\end{equation}
\end{proof}

\begin{rem}[{\bf weak topologies in Bochner spaces}]\label{topology-bochner}
Proposition \eqref{equiv-norms} proves the local equivalence between the Finsler metric $BV^2(\gm)$ and the ambient metric $BV^2([0,1], \RR^2)$. This implies in particular that every minimizing sequence $\{\Gamma^h\}$ of $E$ is bounded in $H^1([0,1],BV^2([0,1],\RR^2))$. In fact, because of \eqref{constantsMm}, we have
$$
\int_0^1 \|\GA_t^h(t)\|^2_{BV^2([0,1],\RR^2)}\,\d t\,\leq \,\int_0^1 \frac{\|\GA_t^h(t)\|^2_{BV^2(\GA^h(t))}}{m_{\GA^h(t)}^2} \,\d t \,\leq \, \frac{1}{m^2}E(\GA)\, \quad m=\underset{h}{\inf}\,\underset{t\in [0,1]}{\rm{essinf}}\,m_{\GA^h(t)}\,.
$$
Then, we could  use some compactness results for the Bochner space $H^1([0,1],BV^2([0,1],\RR^2))$ with respect to some weak topology. 
Now, to our knowledge, the usual  weak and weak* topologies on the Bochner space $H^1([0,1],BV^2([0,1],\RR^2))$ can not be suitably characterized, so that, working with these topologies, prevents us from describing the behavior of the minimizing sequence. 
For instance, the question of the convergence of the curves $\{\Gamma^h(t)\}$ at time $t\in [0,1]$ cannot be answered if we do not have a precise characterization of the topology used to get  compactness. 

We recall in this remark the main issues linked to the characterization of the weak topologies for the Bochner space of $BV^2$-valued functions.

Firstly, we recall that, for every Banach space $B$, the dual space of the Bochner space $H^1([0,1],B)$  is represented by $H^1([0,1],B')$ if and only if the dual space $B'$ has the Radon-Nicodym property (RNP) \cite{Bochner_dual, Bochner_dual_2}. 

This means that, for every measure $\mu: \mathcal{M}([0,1])\rightarrow B$ which has bounded variation and is absolutely continuous with respect to $\lambda$ ($\mathcal{M}([0,1])$ denotes the class of Lebesgue-measurable sets of $[0,1]$ and $\lambda$ the one dimensional Lebesgue measure),  there exists a (unique) function $f\in L^1([0,1], B)$ such that 
\begin{equation}\label{rnp}\mu(A)=\int_A f(t) \,\d \lambda(t) \quad \quad \forall\, A\in \mathcal{M}([0,1])\,.
\end{equation}
This essentially means that the Radon-Nicodym theorem holds for $B$-valued measures. More precisely the Radon-Nicodym derivative $\mu/\lambda$ is represented by a $B$-valued function.
Spaces having the RNP are, for instance,  separable dual spaces and reflexive spaces, so, in particular, Hilbert spaces. However, $L^1(K)$, $L^\infty(K)$, and $C(K)$, where $K$ is a compact set of $\RR$, do not have the RNP.

Now, in order to apply to our case such a result, we should be able to completely characterize the dual of $BV^2([0,1],\RR)$, which represents at the moment an open problem \cite{MZ, Pauw}. Therefore we cannot characterize the weak topology of our initial space.

Another possibility to apply the previous duality result is to consider $BV^2([0,1],\R)$ as the dual  of a Banach space $B$. Then, by proving that it has the RNP and applying the duality result, we could  characterize the weak* topology of $BV^2([0,1],\R)$.  

In fact, according to the characterization of the dual of  Bochner spaces cited above, we could write $H^1([0,1],BV^2([0,1],\R))$ as the dual of $H^1([0,1],B)$.
Unfortunately, $BV^2([0,1],\R)$ does not have the RNP as  is shown  by the following example. 

We consider the following $BV^2([0,1],\RR)$-valued measure
$$\mu(A)\,=\,(\,x\mapsto \varphi_A(x)=\lambda(A\cap (0,x))\,)\quad \quad\forall\, A\in \mathcal{M}([0,1])\,.$$
We can easily satisfy that, for every $A$, $\varphi_A\in BV^2([0,1],\R)$ and $\varphi_A'=\mathbbm{1}_A$. Moreover, if $A$ is Lebesgue negligible we have $|\mu|(A)=0$, which means that $|\mu|$ is absolutely continuous with respect to the Lebesgue measure $\lambda$. 

However, if there exists a function $f\in L^1([0,1],BV^2([0,1],\RR))$ satisfying \eqref{rnp}, then,   we should have in particular 
$$\lambda(A\cap (0,x))\,=\,\int_A f(t)(x)\, \d \lambda(t)\,,$$
where, for every $t$,  $f(t)(x)$ denotes the value of $f(t)$ at $x$. Then, for every $B\subset [0,1]$, we obtain 
$$\int_B\lambda(A\cap (0,x))\,\d \lambda(x)\,=\,\int_B\int_A f(t)(x)\,d \lambda(t)\d \lambda(x)\,$$
which implies that $f(t)(x)=1$ a.e. if $t<x$ and $f(t)(x)=0$ a.e. if $t\geq x$. This is of course in contradiction with the fact   $f(t)$ has to be continuous because it is a $BV^2$-function.
Previous examples and considerations show that, in our case, the weak and weak* topologies are not suitably characterized in order to give meaningful information on the limit.

In order to prove the existence of a geodesic we use a new proof strategy, which is  inspired to the technique proposed in \cite{MN} and is detailed in the proof of the next theorem. We also point out that 
this actually defines  a suitable topology, which allows  us to get semicontinuity and compactness in our framework (see Definition \ref{sigma}).
\end{rem}

We can now prove an existence result for geodesics. 

\begin{thm}[{\bf existence of geodesics}]\label{local_existence}
Let $\gm_0, \gm_1 \in \Ba$ such that $d(\gm_0,\gm_1)< \infty$.
Then, there exists a geodesic between $\gm_0$ and $\gm_1$.
\end{thm}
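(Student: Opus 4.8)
The plan is to apply the direct method of the calculus of variations, taking a minimizing sequence $\{\GA^h\}\subset\mathcal{P}(\gm_0,\gm_1)$ for $E$ and extracting a limit that is both admissible and a minimizer. By Remark~\ref{time-param} we may reparameterize each $\GA^h$ in time so that $\|\GA^h_t(t)\|_{BV^2(\GA^h(t))}=E_1(\GA^h)$ for a.e.\ $t$, hence the sequence has uniformly bounded velocity in the ambient $BV^2$-norm: by Proposition~\ref{global_bound_Mm} the equivalence constants $m_{\GA^h(t)},M_{\GA^h(t)}$ are uniformly bounded (above and below away from zero) along the minimizing sequence, since $E(\GA^h)$ is bounded. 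Thus $\{\GA^h\}$ is bounded in $H^1([0,1],BV^2(\Circ,\RR^2))$, and moreover, by Lemma~\ref{no-collapse}, the lengths $\len(\GA^h(t))$ and the quantities $\essinf_s\|\GA^h{}'(t)(s)\|$, $\|\GA^h{}'(t)\|_{L^\infty}$ stay in a fixed compact interval of $(0,\infty)$ uniformly in $h$ and $t$, so any reasonable limit will still lie in $\Bb$ at every time.

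First I would set up the right notion of convergence for the minimizing sequence. As stressed in Remark~\ref{topology-bochner}, the weak/weak* topologies on $H^1([0,1],BV^2)$ are not usable, so instead I would use a pointwise-in-$t$ compactness argument in the spirit of \cite{MN}: for each fixed $t$, the curves $\GA^h(t)$ are bounded in $BV^2(\Circ,\RR^2)$, hence (by the compact embedding $BV^2\hookrightarrow W^{1,1}$, or $BV^2\hookrightarrow C^1$ up to the $BV$ seminorm) precompact in $W^{1,1}$; combined with the equi-$H^1$-in-time bound one gets, via a Helly/Arzel\`a--Ascoli type argument in the Bochner setting, a subsequence and a limit path $\GA$ with $\GA^h(t)\to\GA(t)$ in $W^{1,1}(\Circ,\RR^2)$ for every $t$, with $\GA(t)$ bounded in $BV^2$ uniformly in $t$, and with $\GA^h_t\rightharpoonup\GA_t$ in $L^2([0,1],W^{1,1})$ (or in a suitable weak sense). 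The uniform bounds from Lemma~\ref{no-collapse} guarantee $\GA(t)\in\Bb$ for all $t$ and $\GA(0)=\gm_0$, $\GA(1)=\gm_1$, so $\GA\in\mathcal{P}(\gm_0,\gm_1)$ provided we also check $\GA\in H^1([0,1],BV^2)$, which follows from the uniform $H^1$-bound and lower semicontinuity of the norm.

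Then I would prove lower semicontinuity of the energy: $E(\GA)\le\liminf_h E(\GA^h)$. Writing $E(\GA^h)=\int_0^1\|\GA^h_t(t)\|_{BV^2(\GA^h(t))}^2\d t$ and using \eqref{change-arc} to transfer to a fixed arc-length parameterization, the integrand is, up to the length factors $\len(\GA^h(t))$ which converge pointwise by Lemma~\ref{no-collapse}(1), a sum of an $L^1(\GA^h(t))$ term, an $L^1$ term in the first derivative, and the total variation $TV^2_{\GA^h(t)}(\GA^h_t(t))=|D(\GA^h_t{}'(t)/\|\GA^h{}'(t)\|)|(\Circ)$. The $L^1$ and $W^{1,1}$ parts pass to the limit by weak $L^2$-in-time convergence of $\GA^h_t$ and strong $W^{1,1}$-in-$s$ convergence of $\GA^h(t)$ together with Fatou; the delicate term is the total-variation part, for which one uses weak* lower semicontinuity of the total variation of measures, after checking that $\GA^h_t{}'(t)/\|\GA^h{}'(t)\|\to\GA_t'(t)/\|\GA'(t)\|$ in $L^1$ (which uses $\|\GA^h{}'(t)\|$ bounded away from $0$, again from Lemma~\ref{no-collapse}).

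The main obstacle I anticipate is precisely making the pointwise-in-$t$ compactness rigorous in the Bochner setting and interchanging it correctly with the weak convergence of time-derivatives — i.e.\ constructing a single path $\GA$ such that simultaneously $\GA^h(t)\to\GA(t)$ for all $t$, $\GA_t^h\rightharpoonup\GA_t$, and the relation $\GA(t)(s)=\gm_0(s)+\int_0^t\GA_\tau(\tau)(s)\,\d\tau$ is preserved in the limit; plus the verification that the $TV^2_{\GA^h(t)}$ term is lower semicontinuous despite the moving weight $\|\GA^h{}'(t)\|$ inside the measure. This is the technical heart of the argument and is where the construction of the auxiliary topology alluded to in Remark~\ref{topology-bochner} (Definition~\ref{sigma}) will be needed; the rest is a fairly standard direct-method wrap-up.
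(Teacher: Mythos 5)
Your overall skeleton is the paper's: time reparameterization to constant speed, the uniform equivalence constants of Proposition~\ref{global_bound_Mm}, the bounds of Lemma~\ref{no-collapse}, transfer to the constant-speed parameterization via \eqref{change-arc} together with Lemmas~\ref{bound-rep-bv} and \ref{conv-w11}, lower semicontinuity of the second variation under strong $W^{1,1}$ convergence, and Fatou in $t$. Your Arzel\`a--Ascoli/Helly step for the paths themselves is fine (boundedness in $BV^2$ plus equi-H\"older continuity in time does give $\GA^h(t)\to\GA(t)$ in $W^{1,1}(\Circ,\RR^2)$ for every $t$; this is essentially how the paper treats the Sobolev case). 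The genuine gap is in the velocities, exactly at the point you label ``the technical heart'' and then defer. To pass the integrand $\|\GA^h_t(t)\|^2_{BV^2(\GA^h(t))}$ to the limit, the paper needs, for each fixed $t$, convergence of $\GA^h_t(t)$ itself, strong in $W^{1,1}(\Circ,\RR^2)$, so that the composition with the converging reparameterizations and the semicontinuity of the second variation can be applied pointwise in $t$ before integrating (steps \eqref{sci2}--\eqref{sci}). Your substitute, ``$\GA^h_t\rightharpoonup\GA_t$ in $L^2([0,1],W^{1,1})$ (or in a suitable weak sense)'', is not available from boundedness: $W^{1,1}$ and $BV^2$ are non-reflexive and the duals of the corresponding Bochner spaces are not characterized --- this is precisely the RNP obstruction of Remark~\ref{topology-bochner} that you cite. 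Similarly, the $L^1$ convergence of $\GA^h_t{}'(t)/\|\GA^h{}'(t)\|$ that you invoke for the total-variation term presupposes strong pointwise-in-$t$ convergence of the derivatives of the velocities, which you never establish. Appealing to Definition~\ref{sigma} is circular for a blind proof: the $\sigma$-topology is distilled a posteriori from the very construction that is missing here.

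The missing construction is the dyadic-averaging/martingale argument, which is the actual core of the paper's proof: define $f^h_n(t)=2^n\int_{I_{n,k}}\GA^h_\tau(\tau)\,\d\tau$ on the dyadic intervals $I_{n,k}$, extract for each level $n$ weak* $BV^2$ limits $f^\infty_n(t)$, observe that $\{f^\infty_n\}$ is a $BV^2(\Circ,\RR^2)$-valued martingale bounded in $L^2([0,1],H^1(\Circ,\RR^2))$, apply the martingale convergence theorem to get $f\in L^2([0,1],BV^2(\Circ,\RR^2))$ with $f^\infty_n(t)\to f(t)$ for a.e.\ $t$, and set $\GA^\infty(t)=\gm_0+\int_0^t f(\tau)\,\d\tau$. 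It is this device that simultaneously produces the integral identity \eqref{flow}, the membership $\GA^\infty\in H^1([0,1],\BVd)$ with $\GA^\infty(t)\in\Bb$ for all $t$, and the pointwise-in-$t$ strong $W^{1,1}$ convergence of both $\GA^h(t)$ and $\GA^h_t(t)$ on which the semicontinuity step rests --- all without any weak-compactness claim in the Bochner space. Without this step (or a genuine substitute for it), your argument does not close; with it, the remainder of your outline coincides with the paper's proof.
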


\begin{proof}
Let $\{\GA^h\} \subset \mathcal{P}(\gm_0,\gm_1)$ be a minimizing sequence for $E$ so that $E(\GA^h)\rightarrow \inf\,E$. Without loss of generality we can suppose $\sup_h \; E(\GA^h)< +\infty$.  We also remark that, from the previous lemma, it follows that
$$0 \,<\, \underset{h}{\inf}\,\underset{t\in [0,1]}{\mbox{essinf}}\, m_{\GA^h(t)}\, <\,\underset{h}{\sup}\,\underset{t\in [0,1]}{\mbox{esssup}}\, M_{\GA^h(t)}\, <\,+\infty\,.
$$
Moreover, we can suppose (up to a time reparameterization) that every homotopy is parameterized with respect to the time-constant speed parameterization. Then, we can assume that  $\{\GA^h_t(t)\}$  satisfies \eqref{time-vel} for every $h$. 

\medskip
\noindent{\em Step 1: Definition of  a limit path.} For every $n>1$ we consider the dyadic decomposition of $[0,1]$ given by the intervals  
\eql{\label{eq-dyadic}
	I_{n,k} = [\frac{k}{2^n},\frac{k+1}{2^n}[
	\qforq k \in [0,2^n-1]
}
and, for every $t\in [0,1]$  we define 
$$f_{n}^h(t)=2^n \int_{I_{n,k}} \GA^h_\tau(\tau)\, \d \tau, $$
where $I_{n,k}$ is the interval containing $t$. Remark that, for every $n$ and $h$, $f_{n}^h: [0,1]\rightarrow BV^2(\Circ,\RR^2)$  is  piecewise constant with respect to the family $\{I_{n,k}\}$, and
\begin{equation}\label{dom-conv}
\int_0^1 f_n^h(t)\, \d t = \int_0^1 \GA^h_t(t)\, \d t = \GA(1)-\GA(0)\,.
\end{equation}  
Now, setting
$m = \underset{h}{\inf}\,\underset{t\in [0,1]}{\mbox{essinf}}\, m_{\GA^h(t)}\, >\,0
$, by Jensen's inequality and \eqref{eq-equi-norm}, we get 
\begin{equation}\label{bound0}
\|f^h_n(t)\|^2_{BV^2(\Circ,\RR^2)}\leq 2^n\int_{I_{n,k}} \frac{\|\GA_t^h(t)\|^2_{BV^2(\GA(t))}}{m_{\GA^h(t)}^2}\,\d t \,\leq \, \frac{2^n}{m^2}E(\GA^h)\quad \foralls \, t\in [0,1]\,.
 \end{equation}
So, by $2^n$ successive extractions, we can take
a subsequence (not relabeled) and  a  piecewise constant (with respect to the family $\{I_{n,k}\}$) function $f_{n}^\infty: [0,1]\rightarrow BV^2(\Circ,\RR^2)$  such that 
$$\foralls n, \quad f^h_n(t) \overset{*-BV^2}{\rightharpoonup} f^\infty_n(t) \quad \foralls t\in [0,1]\,,$$
and
\begin{equation}\label{conv} \int_0^1\|f_n^\infty(t)\|^2_{BV^2(\Circ,\RR^2)} \d t \leq \underset{h\rightarrow \infty}{\liminf} \;\; \int_0^1\|f^h_n(t)\|^2_{BV^2(\Circ,\RR^2)}\d t\,.
\end{equation}
Moreover we can write 
$I_{n,k}=[k2^{-n}, (2k+1)2^{-n-1}[\cup [(2k+1)2^{-n-1}, (k+1)2^{-n}[$
and 
$$f^h_{n+1}(t) = 2^{n+1}\int_{I_{n+1,2k}} \GA^h_t(t) \d t \quad \foralls t\in [k2^{-n}, (2k+1)2^{-n-1}[\,,$$
$$f^h_{n+1}(t) = 2^{n+1}\int_{I_{n+1,2k+1}}\GA^h_t(t) \d t  \quad \foralls t\in [(2k+1)2^{-n-1}, (k+1)2^{-n}[\,,$$
and therefore
$$\int_{I_{n,k}}f^h_{n+1}(t) \d t =  \int_{I_{n,k}}\GA^h_t(t) \d t\,,$$
$$f^h_n(t) = 2^n \int_{I_{n,k}}f^h_{n+1}(t)\d t \quad \foralls t\in I_{n,k}.$$
Then, by the dominated convergence theorem, we get
\begin{equation}\label{marti}
 f_n^\infty(t) = 2^n \int_{I_{n,k}}f_{n+1}^\infty(t) \d t\,,
\end{equation}
which implies that $\{f^\infty_n\}$ is a $BV^2(\Circ,\RR^2)$-valued martingale \cite{Durett, Chatterji}. We note that $\{f^\infty_n\}$ is a martingale with respect to the probability space $[0,1]$ equipped with Lebesgue measure and that the filtration is defined by the increasing sequence of $\sigma$-algebras generated (at every time $n$) by the intervals $\{I_{n,0},...,I_{n,2^n-1}\}$.

Moreover, by \eqref{conv}, Fatou's lemma, and,~\eqref{bound0},  we get
$$
\begin{array}{ll}
\displaystyle{ \int_0^1\|f_n^\infty(t)\|^2_{BV^2(\Circ,\RR^2)} \d t }&\displaystyle{\leq \underset{h\rightarrow \infty}{\liminf} \;\; \int_0^1\|f^h_n(t)\|^2_{BV^2(\Circ,\RR^2)}\d t}\\
 & \displaystyle{\leq \underset{h\rightarrow \infty}{\liminf} \;\; \frac{E(\GA^h)}{m^2}}\,.
 \end{array}
$$
\par Now, as $BV^2(\Circ,\RR^2)$ is embedded in $H^1(\Circ,\RR^2)$, this implies that $\{f^\infty_n\}$ is a bounded  martingale in $L^2([0,1],H^1(\Circ,\RR^2))$ so, by the convergence theorem for martingales \cite[Theorem 4]{Chatterji},  $f^\infty_n(t)\rightarrow f(t)$ in $H^1(\Circ,\RR^2)$ for almost every $t$. 
Note also that, as $f^\infty_n\in BV^2(\Circ,\RR^2)$ and the second variation is lower semicontinuous with respect to the $W^{1,1}(\Circ,\RR^2)$-convergence, we actually get $f\in L^2([0,1],BV^2(\Circ,\RR^2))$.

We can now define a candidate to be a minimum  of $E$ by setting
\begin{equation}\label{formula}
\GA^\infty(t)= \int_0^t f(\tau)\, \d \tau +\GA(0) \quad \foralls \; t\in [0,1]\,.
\end{equation}

\medskip
\noindent
{\em Step 2: $\GA^\infty$ is a geodesic path.}
\, 
We can easily satisfy that  $\GA^\infty\in H^1([0,1],BV^2(\Circ,\RR^2))$ and that $\GA^\infty$ satisfies~\eqref{flow}. In fact, by the dominated convergence theorem and~\eqref{dom-conv}, we have
$$\int_0^1 f(\tau)\, \d \tau = \underset{n\rightarrow \infty}{\lim} \, \int_0^1 f_n^\infty(\tau)\, \d \tau = \underset{n\rightarrow \infty}{\lim}\underset{h\rightarrow \infty}{\lim}\int_0^1 f_n^h(\tau)\, \d \tau = \GA(1)-\GA(0)\,.$$
This implies in particular that $\GA^\infty$ satisfies~\eqref{initial_conditions}. In order to prove that  $\GA^\infty\in \mathcal{P}(\gm_0,\gm_1)$ we have to show that $\GA^\infty(t)\in \Bb$  for every $t$. 

Below, we prove that $ \GA^h(t) \rightarrow \GA^\infty(t)$ in ${W^{1,1}(\Circ,\RR^2)}$ for every $t$. This implies (up to a subsequence) the a.e. convergence and, because of \eqref{length2}, we get
$$(\underset{s\in \Circ}{\rm{essinf}}\,\|\gm_0'(s)\|)e^{-  \inf E}	\leq \|(\GA^\infty)'(t)(s)\|$$
for every $t$ and for a.e.-$s$. This proves in particular that $\GA^\infty(t)$ satisfies \eqref{cond-derivative-bis} for every $t$ so that   $\GA^\infty\in \mathcal{P}(\gm_0,\gm_1)$.

We denote by $\GA_n^\infty$ and $\GA_n^h$ the paths defined by $f_n^\infty$ and $f_n^h$  through~\eqref{formula}, respectively. 
Now, as  $\{\GA^h_t(t)\}$  satisfies \eqref{time-vel} for every $h$ the norms  $\|\GA^h_t(t)\|_{L^\infty(\Circ,\RR^2)}$ are uniformly bounded. Then, by the definition of $f^h_n$ and a straightforward computation, we get that 
$\| \GA^h(t) -\GA^h_n(t)\|_{W^{1,1}(\Circ, \RR^2)}$
is small for $n$ large enough.  

 Moreover, as  $f^h_n(t) \overset{*-BV^2}{\rightharpoonup} f^\infty_n(t)$ for every $t$, from the dominated convergence theorem, it follows that $\| \GA^\infty_n(t) -\GA^h_n(t)\|_{{W^{1,1}(\Circ,\RR^2)}}\rightarrow 0$ for every $t$ as  $h\rightarrow \infty$ for every $n$.
Similarly, as  $f^\infty_n\rightarrow f$ in $H^1(\Circ,\RR^2)$, 
$\| \GA^\infty_n(t) -\GA^\infty(t)\|_{W^{1,1}(\Circ,\RR^2)}$ is small for $n$ large enough.

Finally, this implies that 
\begin{equation}\label{cons-length}
\|\GA^h(t) -\GA^\infty(t)\|_{{W^{1,1}(\Circ,\RR^2)}}\rightarrow 0\,\quad \mbox{as}\; h\rightarrow 0\,\quad \forall\,t\in [0,1]\,.
\end{equation}
By the same arguments we can show that
\begin{equation}\label{conv-v}
\| \GA^h_t(t) -\GA^\infty_t(t)\|_{W^{1,1}(\Circ,\RR^2)}\rightarrow 0\,\quad \mbox{as}\; h\rightarrow 0\,\quad \forall\,t\in [0,1]\,.
\end{equation}
We prove now that $\GA^\infty$ is a minimizer of $E$.  
We recall that we have supposed (up to a time reparameterization) that   $\{\GA^h_t(t)\}$ is bounded in $BV^2(\Circ,\RR^2)$ for every $t$. 
\par Because of \eqref{length}, \eqref{length2}, and, Lemma \ref{conv-w11}, as   $\GA^h(t)$ converges in $W^{1,1}(\Circ,\RR^2)$ towards $\GA^\infty(t)$,  the constant speed parameterizations at time $t$, denoted respectively by  $\phi_{\GA^h(t)}$ and $\phi_{\GA^\infty(t)}$, also converge in $W^{1,1}(\Circ, \Circ)$. Moreover, according to \eqref{change-arc}, for a fixed time $t$, we have
$$ \|\GA^{h}_t(t)\|_{BV_2(\GA^h(t))} =  \|\GA^{h}_t \circ \phi_{\GA^h(t)}\|^{\GA^h(t)}_{BV^2(\Circ,\RR^2)}\,.$$
Since $\{\GA^h_t(t)\}$ is bounded in $BV^2(\Circ,\RR^2)$ and  the two terms involved in the composition  converge in $W^{1,1}(\Circ,\RR^2)$, we have
$$\|\GA^{h}_t \circ \phi_{\GA^h(t)} - \GA^{h}_t \circ \phi_{\GA^\infty(t)}\|_{W^{1,1}(\Circ,\RR^2)}\leq \|\GA^h_t\|_{BV^2(\Circ,\RR^2)}\| \phi_{\GA^h(t)} - \phi_{\GA^\infty(t)}\|_{W^{1,1}(\Circ,\RR^2)}\,,$$
$$\|\GA^{h}_t \circ \phi_{\GA^\infty(t)} -	 \GA^{\infty}_t \circ \phi_{\GA^\infty(t)}\|_{W^{1,1}(\Circ,\RR^2)}\leq C\|\GA^{h}_t  -	 \GA^{\infty}_t\|_{W^{1,1}(\Circ,\RR^2)}\,,$$
where the constant 
$$C=\max\,\{1, \|(\varphi_{\GA^\infty(t)}^{-1})'\|_{L^\infty(\Circ,\RR^2)}\}\,$$
is bounded because of Lemma \ref{bound-rep-bv}.
This implies in particular that 
\begin{equation}\label{sci2}
\GA^{h}_t \circ \phi^h(t)  \overset{W^{1,1}(\Circ,\RR^2)}{\rightarrow} \GA^{\infty}_t \circ \phi^\infty(t)\,.
\end{equation}
Now, we note that, because of the $W^{1,1}$-convergence, we have $\len(\GA^h(t))\rightarrow \len(\GA(t))$. Moreover,  the second variation is lower semicontinuous with respect to the  $W^{1,1}$-convergence. 

Then, by \eqref{change-arc}, for every  $t$ we get

\begin{multline}\label{sci}
	\| \GA^{\infty}_t(t)\|_{BV^2(\GA^\infty(t))}
	=  \| \GA^\infty_t \circ \phi_{\GA^\infty(t)} \|^{\GA^{\infty}(t)}_{BV^2(\Circ,\RR^2)} \vspace{0.3cm}\\
	\leq \underset{h\rightarrow \infty}{\liminf} \, \| \GA^h_t \circ \phi_{\GA^h (t)}\|^{\GA^h(t)}_{BV^2(\Circ,\RR^2)}
	= \underset{h\rightarrow \infty}{\liminf} \,\|\GA^{h}_t(t)\|_{BV^2(\GA^h(t))}\,.
\end{multline}

By integrating the previous inequality and using Fatou's lemma we get that $\GA^\infty$ minimizes $E$, which ends the proof.
\end{proof}

\begin{rem} In order to get the semicontinuity's inequality~\eqref{sci}, we actually just would need the convergence in~\eqref{sci2} with respect to the $BV^2$-weak topology. We recall that we do not know a characterization of the dual space of $BV$, which explains the choice of the weak-* topology (i.e. we look at $\BVd$ as a dual Banach space) in the previous proof. 

Moreover,  the martingale approach allows one to get strong convergence in  $W^{1,1}$ without applying any strong-compactness criterion for Sobolev spaces. This is a key point of the proof because the $BV^2$-norm  is semicontinuous with respect to the strong $W^{1,1}$-topology. 
\end{rem}

Inspired by the previous proof we can define the following topology on $H^1([0,1],\Bb)$:

\begin{defn}[{\bf $\sigma$-topology}]\label{sigma} Let $\{\GA^h\}\subset H^1([0,1],\BVd)$ and $\GA\in H^1([0,1],\BVd)$. Let $\{I_{n,k}\}_{n,k}$ be the collection of the intervals giving the dyadic decomposition of $[0,1]$ defined in~\eqref{eq-dyadic}. We say that $\GA_h$ converges to $\GA$ with respect to the $\sigma$-topology (denoted by $\GA^h\overset{\sigma}{\rightarrow}\GA$) if, for every $n$,  there exists a sequence of piecewise constant functions $\{f_n^\infty\}$ on  $\{I_{n,k}\}_{n,k}$  such that the following hold.
\begin{itemize}
\item[(i)] ({\em $BV^2$-* weak convergence}).  The sequence $$f_{n}^h(t)=2^n \int_{I_{n,k}} \GA^h_\tau(\tau)\, \d \tau  \,$$
satisfies
$$\foralls (n,k) \quad f^h_n(t) \overset{*-BV^2}{\rightharpoonup} f^\infty_n(t) \quad \foralls t\in [0,1]$$
as $h\rightarrow \infty$; 
\item[(ii)] ({\em Martingale convergence}). We have
$$\underset{n\rightarrow \infty}{\lim}\|f_n^\infty - \GA_t\|_{L^2([0,1],H^1(\Circ,\RR^2))}=0\,.$$
\end{itemize}
\end{defn}

Then, the proof of Theorem~\ref{local_existence} gives actually the following result:

\begin{thm}\label{comp-sigma} For every  $\gm_0,\gm_1\in\Bb$, the following properties hold.
\begin{itemize}
\item[(i)] Every bounded set of $\mathcal{P}(\gm_0,\gm_1)$ is sequentially compact with respect to $\sigma$-topology and the $\sigma$-convergence implies the strong convergence in $H^1([0,1], W^{1,1}(\Circ,\RR^2))$.
\item[(ii)] The energy $E$ is lower semicontinuous with respect to the $\sigma$-topology.
\end{itemize}
\end{thm}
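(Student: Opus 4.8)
The plan is to extract from the proof of Theorem~\ref{local_existence} the purely topological content it already establishes in passing: the dyadic--martingale extraction (which supplies $\sigma$-compactness), the strong $W^{1,1}$-convergence it produces, and the lower semicontinuity of $E$ along such sequences. Nothing genuinely new is needed; the task is to reorganize the argument so that it no longer refers to a \emph{minimizing} sequence, only to a bounded one. I would split the proof into three steps.

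\textbf{Step 1 ($\sigma$-compactness).}
Let $\{\GA^h\}$ lie in a bounded subset of $\mathcal{P}(\gm_0,\gm_1)$, so that $\sup_h E(\GA^h)<\infty$; after the time-reparameterization of Remark~\ref{time-param} (which leaves $E$ unchanged) I may assume \eqref{time-vel}, and then Propositions~\ref{global_bound_Mm} and~\ref{equiv-norms} give a bound on $\|\GA^h_t(t)\|_{BV^2(\Circ,\RR^2)}$ uniform in $t$ and $h$. For the dyadic intervals \eqref{eq-dyadic} I form the conditional averages $f_n^h(t)=2^n\int_{I_{n,k}}\GA^h_\tau(\tau)\,\d\tau$; Jensen's inequality together with \eqref{eq-equi-norm} bounds $\{f^h_n(t)\}_h$ in $BV^2$ for each fixed $n$ (this is \eqref{bound0}), so a diagonal extraction over the finitely many intervals at each level $n$ yields a subsequence and piecewise-constant maps $f^\infty_n$ with $f^h_n(t)\overset{*-BV^2}{\rightharpoonup}f^\infty_n(t)$ for all $t,n$. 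The consistency identity \eqref{marti} makes $\{f^\infty_n\}$ a $BV^2$-valued martingale, bounded in $L^2([0,1],H^1(\Circ,\RR^2))$, so the martingale convergence theorem gives $f^\infty_n\to f$ in $L^2([0,1],H^1)$ with $f\in L^2([0,1],BV^2)$ by lower semicontinuity of the second variation. Setting $\GA^\infty(t)=\gm_0+\int_0^t f$ one has $\GA^\infty_t=f$, hence both conditions of Definition~\ref{sigma} hold and $\GA^h\overset{\sigma}{\to}\GA^\infty$; finally $\GA^\infty(t)\in\Bb$ for every $t$ (so $\GA^\infty\in\mathcal{P}(\gm_0,\gm_1)$) follows from the a.e.\ convergence of $(\GA^h)'(t)$ and the lower bound \eqref{length2}.

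\textbf{Step 2 (strong $W^{1,1}$-convergence and semicontinuity of $E$).}
For the implication ``$\sigma$-convergence $\Rightarrow$ strong convergence in $H^1([0,1],W^{1,1}(\Circ,\RR^2))$'' I would reuse the three-term estimate behind \eqref{cons-length}--\eqref{conv-v}: with $\GA^h_n,\GA^\infty_n$ the paths generated by $f^h_n,f^\infty_n$, bound $\|\GA^h(t)-\GA^h_n(t)\|_{W^{1,1}}$ uniformly in $h$ via the uniform $L^\infty$-bound on the velocities; get $\|\GA^h_n(t)-\GA^\infty_n(t)\|_{W^{1,1}}\to0$ as $h\to\infty$ from the weak-$*$ convergence of $f^h_n$ and dominated convergence; get $\|\GA^\infty_n(t)-\GA^\infty(t)\|_{W^{1,1}}\to0$ from $f^\infty_n\to f$; then sum, and integrate in $t$ by dominated convergence, with the same reasoning applied to $\GA^h_t$. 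For (ii), given $\GA^h\overset{\sigma}{\to}\GA$, Step~1 yields $\GA^h(t)\to\GA(t)$ and $\GA^h_t(t)\to\GA_t(t)$ in $W^{1,1}$ for every $t$; by \eqref{length}, \eqref{length2} and Lemma~\ref{conv-w11} the constant-speed parameterizations $\phi_{\GA^h(t)}$ converge to $\phi_{\GA(t)}$ in $W^{1,1}(\Circ,\Circ)$, with uniformly bounded $BV^2$-norm (Lemma~\ref{bound-rep-bv}); via the change of variable \eqref{change-arc} an estimate of the composition operator gives $\GA^h_t\circ\phi_{\GA^h(t)}\to\GA_t\circ\phi_{\GA(t)}$ in $W^{1,1}$, and since $\len(\GA^h(t))\to\len(\GA(t))$ while the second variation is $W^{1,1}$-lower semicontinuous, \eqref{sci} holds for each $t$; Fatou in $t$ then gives $E(\GA)\le\liminf_h E(\GA^h)$.

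\textbf{Expected main obstacle.}
The essential difficulty is the one that motivated the whole construction (Remark~\ref{topology-bochner}): no workable weak or weak-$*$ topology on $H^1([0,1],BV^2(\Circ,\RR^2))$ is available, so direct weak compactness is out of reach; the dyadic-martingale extraction replaces it, and its decisive feature is that it outputs \emph{strong} $W^{1,1}$-convergence with no Sobolev compactness argument --- precisely what the $W^{1,1}$-lower semicontinuity of the $BV^2$ second variation requires. The remaining technical nuisance is that the reparameterization map is discontinuous for the $BV^2$-norm (Remark~\ref{disc-metric}), so composing with the constant-speed parameterizations must be controlled through the uniform bounds of Lemma~\ref{bound-rep-bv} and the $W^{1,1}$-continuity of Lemma~\ref{conv-w11}. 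A minor bookkeeping point is that in the compactness statement the time-reparameterization should be applied before the extraction, so the conclusion is naturally read for the reparameterized sequence (equivalently, one restricts to bounded sets on which the velocities are already uniformly $BV^2$-bounded).
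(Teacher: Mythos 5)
Your proposal is correct and follows essentially the same route as the paper: the paper itself proves Theorem~\ref{comp-sigma} by observing that the dyadic--martingale extraction, the three-term $W^{1,1}$-estimate, and the reparameterization/Fatou argument in the proof of Theorem~\ref{local_existence} apply verbatim to any bounded (time-reparameterized) sequence, which is exactly the reorganization you carry out. Your closing remark about applying the time-constant-speed reparameterization before extraction matches the implicit convention in the paper's argument, so no genuinely new ingredient is introduced on either side.
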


\begin{rem}\label{existence-sigma}
The result of existence proved by Theorem \ref{local_existence} can  now be presented as follows. We can suppose that  $\{\GA^h\} \subset\mathcal{P}(\gm_0,\gm_1) $ with  $\underset{h}{\sup}\, E(\GA^h)< \infty$. 
Then $\{\GA^h\}$ is uniformly bounded in  $\mathcal{P}(\gm_0,\gm_1)$ and, by points $(i)$ and $(ii)$ of the previous theorem, energy $E$ reaches its minimum on  $\mathcal{P}(\gm_0,\gm_1)$.
\end{rem}

\subsection{Geodesic Distance Between Geometric Curves}
\label{13}

Theorem~\ref{local_existence} shows the existence of a geodesic between any two parameterized curves in $\Ba$. We are interested now in defining a geometric distance between geometric curves (i.e., up to reparameterization). To this end we consider the set of curves belonging to $\Bb$ that are globally injective and oriented counter-clockwise . Such a set is denoted by $\Bb^i$.

 The space of geometric curves is defined as $\Bb^i \, / \, {\rm Diff}^{BV^2}(\Circ)$. We remind that in this section ${\rm Diff}^{BV^2}(\Circ)$ denotes the set defined in Definition \ref{reparam}. For every $\gm\in \Ba^i$ its equivalence class (called also  geometric curve) is denoted by $[\gm]$.

\begin{simplified}
\begin{rem}[{\bf Reparameterization and $BV^2$-norm}]
 Remark that  for every $\gm\in \Ba$ and $\varphi \in \rm{Diff}^{BV^2}(\Circ)$ we have 
$$
\|(\gm\circ \varphi)'\|_{BV(x)}=\|\gm'\|_{BV(x)} \,.
$$
Moreover, 
 $$\|\gm\circ \varphi\|_{L^1(\Circ,\RR^2)}\leq \|\gm\circ\varphi\|_{L^\infty(\Circ,\RR^2)} = \|\gm\|_{L^\infty(\Circ,\RR^2)}\,,$$ 
 so by~\eqref{embedding}, we get
\begin{equation}\label{reparamet}
\|\gm\circ \varphi\|_{BV^2(x)}\leq 2\|\gm\|_{BV^2(x)} \,.
\end{equation}
\end{rem}
\end{simplified}

The next proposition defines a distance on the set of curves belonging to $\Ba^i$  up to reparameterization. 
 
\begin{prop}\label{distance-geom}
 The Procrustean dissimilarity measure defined by
\begin{equation}
\mathcal{D}([\gm_0] , [\gm_1]) = \inf\, \{ d(\gm_0 \circ \phi, \gm_1 \circ \psi ) \,: \,\phi,\psi \in {\rm{Diff}}^{BV^2}(\Circ)\}
\end{equation}
is a distance on the set of $\Ba^i$-curves up to reparameterization.
\end{prop}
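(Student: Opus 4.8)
The plan is to verify the three defining properties of a metric for $\mathcal{D}$: non-negativity with $\mathcal{D}([\gm],[\gm])=0$, symmetry, the triangle inequality, and—most delicately—the fact that $\mathcal{D}([\gm_0],[\gm_1])=0$ implies $[\gm_0]=[\gm_1]$. Non-negativity is immediate since $d\geq 0$. Symmetry follows at once from the symmetry of $d$ together with the symmetric role of $\phi$ and $\psi$ in the infimum. The triangle inequality is the standard Procrustean argument: given $[\gm_0],[\gm_1],[\gm_2]$ and $\varepsilon>0$, pick $\phi,\psi$ with $d(\gm_0\circ\phi,\gm_1\circ\psi)\leq \mathcal{D}([\gm_0],[\gm_1])+\varepsilon$ and $\eta,\zeta$ with $d(\gm_1\circ\eta,\gm_2\circ\zeta)\leq \mathcal{D}([\gm_1],[\gm_2])+\varepsilon$; then use the reparameterization invariance of $d$ (equation~\eqref{invariance-1}) to align the two middle representatives of $\gm_1$, i.e. replace $\gm_0\circ\phi$ by $\gm_0\circ\phi\circ(\psi^{-1}\eta)$ etc., apply the triangle inequality for $d$ on $\Ba$, and take the infimum over the remaining reparameterizations.

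Next I would check well-definedness: the value $\mathcal{D}([\gm_0],[\gm_1])$ must not depend on the chosen representatives. This is clear because precomposing $\gm_0$ by a fixed $\chi\in{\rm Diff}^{BV^2}(\Circ)$ merely reindexes the infimum over $\phi$ (replace $\phi$ by $\chi^{-1}\phi$), using that ${\rm Diff}^{BV^2}(\Circ)$ is a group — which is exactly why Definition~\ref{reparam} requires $\varphi^{-1}\in BV^2$. Also $\mathcal{D}([\gm],[\gm])=0$ follows by taking $\phi=\psi$ and noting $d(\gm\circ\phi,\gm\circ\phi)=0$.

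The main obstacle is the separation axiom: $\mathcal{D}([\gm_0],[\gm_1])=0 \Rightarrow [\gm_0]=[\gm_1]$. Here one takes sequences $\phi_n,\psi_n\in{\rm Diff}^{BV^2}(\Circ)$ with $d(\gm_0\circ\phi_n,\gm_1\circ\psi_n)\to 0$; by invariance we may normalize $\psi_n=\mathrm{id}$, so $d(\gm_0\circ\phi_n,\gm_1)\to 0$. Since the strict distance $d$ on $BV^2$ dominates the $L^1$-distance (indeed it contains the term $\|f-g\|_{L^1}$) and in fact the embedding~\eqref{embedding} gives $L^\infty$-control, we get $\gm_0\circ\phi_n\to\gm_1$ uniformly. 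The geodesic distance also controls lengths via~\eqref{length}, so $\len(\gm_0\circ\phi_n)=\len(\gm_0)\to\len(\gm_1)$, giving $\len(\gm_0)=\len(\gm_1)$; reparameterizing to constant speed (Remark~\ref{arc-len}) and using Lemma~\ref{bound-rep-bv} to get a uniform $BV^2$-bound on the $\phi_n$, one extracts (Helly / weak-* compactness) a subsequence with $\phi_n\to\phi$ uniformly for some nondecreasing $\phi$, whence $\gm_0\circ\phi=\gm_1$. The remaining point — that the limit $\phi$ is actually a genuine element of ${\rm Diff}^{BV^2}(\Circ)$ rather than a degenerate monotone map — is where care is needed: one uses that both $\gm_0$ and $\gm_1$ are in $\Ba^i$ (globally injective immersions with $\essinf\|\gm'\|>0$), so $\gm_0\circ\phi=\gm_1$ with both sides immersions forces $\phi$ to be bi-Lipschitz, and the $BV^2$ regularity of $\phi$ and $\phi^{-1}$ then follows from the chain rule for $BV$ functions (the Banach algebra property) applied to $\phi'=(\gm_1'\cdot)/(\gm_0'\circ\phi)$. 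I expect this last regularity/non-degeneracy verification to be the technical heart of the argument; the other axioms are routine.
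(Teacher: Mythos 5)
Your overall architecture (symmetry and triangle inequality via the reparameterization invariance of $d$, separation via a sequence of reparameterizations, a uniform $BV^2$ bound from Lemma~\ref{bound-rep-bv}, and a weak-* limit $\phi$ with $\gm_0\circ\phi=\gm_1$) is essentially the paper's proof. However, there is a genuine gap at the very first step of your separation argument. You justify the implication $d(\gm_0\circ\phi_n,\gm_1)\to 0 \Rightarrow \gm_0\circ\phi_n\to\gm_1$ by saying that ``the strict distance $d$ on $BV^2$ dominates the $L^1$-distance (indeed it contains the term $\|f-g\|_{L^1}$)''. This conflates two different objects: the $d$ in the Proposition is the \emph{geodesic} distance of Definition~\ref{defn-geodesic-paths}, defined as an infimum of path energies, and it does not contain an $\|f-g\|_{L^1}$ term by definition; the metric inducing strict convergence introduced in Section~\ref{11} is unrelated to it. The needed domination is true but requires an argument: one takes (near-)minimizing paths $\GA^h$ between $\gm_0$ and $\gm_1\circ\phi_h$ (Theorem~\ref{local_existence} supplies geodesics), uses \eqref{flow}, i.e. $\gm_1\circ\phi_h-\gm_0=\int_0^1\GA^h_t(t)\,\d t$, together with Jensen/Minkowski, and crucially the uniform lower bound $m=\inf_h\inf_t m_{\GA^h(t)}>0$ on the norm-equivalence constants from Proposition~\ref{global_bound_Mm} (available because $E(\GA^h)\to 0$ is uniformly bounded), to obtain, as in \eqref{bound0}--\eqref{bound},
\begin{equation*}
\|\gm_0-\gm_1\circ\phi_h\|^2_{BV^2(\Circ,\RR^2)}\;\leq\;\frac{1}{m^2}\,E(\GA^h)\;\longrightarrow\;0\,.
\end{equation*}
Without this step (or an equivalent one) your argument does not get started, since nothing a priori prevents a path of tiny geodesic energy from joining curves that are far apart in $L^1$.

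Apart from this, your proof matches the paper's: the triangle inequality via aligning the middle representatives using invariance is exactly the computation in the paper, and the compactness of $\{\phi_n\}$ via Lemma~\ref{bound-rep-bv} and weak-* extraction is the paper's route. Your closing paragraph, checking that the weak-* limit $\phi$ is a nondegenerate element of ${\rm Diff}^{BV^2}(\Circ)$ (using that $\gm_0,\gm_1\in\Ba^i$ are injective immersions with $\essinf\|\gm'\|>0$ and the $BV$ chain rule), actually goes beyond the paper, which stops at $\gm_0=\gm_1\circ\phi$; that extra verification is legitimate and, strictly speaking, needed to conclude $[\gm_0]=[\gm_1]$, so keep it--but fix the first step as indicated.
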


\begin{proof}
Clearly, the function $\mathcal{D}$ is symmetric, nonnegative, and it is equal to zero if $[\gm_0] = [\gm_1]$. 
\par Note also  that, the distance $d$ is invariant under reparameterization, so that   
\begin{equation}\label{invariance-2}
d(\gm_0,\gm_1) = d(\gm_0 \circ \phi,\gm_1\circ \phi), \quad \foralls \phi \in {\rm{Diff}}^{BV^2}(\Circ)\,.
\end{equation}

Then from the invariance~\eqref{invariance-2} it follows
that for every $\varphi_1, \varphi_2,\varphi_3 \in  {\rm{Diff}}^{BV^2}(\Circ)$ and $\gm_1, \gm_2, \gm_3 \in \Ba$ we have 
$$
\begin{array}{ll}
d(\gm_1\circ\varphi_1,\gm_2\circ\varphi_2) & \leq d(\gm_1\circ\varphi_1,\gm_3\circ\varphi_3) + d(\gm_3\circ\varphi_3,\gm_2\circ\varphi_2)\\
& =  d(\gm_1\circ\varphi_1\circ\varphi_3^{-1},\gm_3) + d(\gm_3,\gm_2\circ\varphi_2\circ\varphi_3^{-1})
\end{array}
$$
which implies that the  triangle inequality is satisfied for $\mathcal{D}$. 
\par We prove now that $\mathcal{D}([\gm_0] , [\gm_1]) = 0$ implies that $[\gm_0] = [\gm_1]$. We assume that $\gm_0$ is parameterized by the constant speed parameterization (this  is possible because of the invariance of $d$ under reparameterization).
So there exists a sequence $\{\phi_h\}$ of reparameterizations such that 
$$d(\gm_0,\gm_1 \circ \phi_h) \leq \frac 1 h\,.$$
Then we can consider the sequence $\{\GA^h\}$ of  minimal geodesics joining $\gm_0$ and $\gm_1 \circ \phi_h$. 
Similarly to \eqref{bound0}, by setting $m = \inf_h \inf_t m_{\GA^h(t)} > 0$, from \eqref{flow}, it follows 
\begin{equation}\label{bound}\| \gm_0 - \gm_1  \circ \phi_h \|^2_{BV^2(\Circ,\RR^2)} \leq \int_0^1 \|\GA_t^h \|^2_{BV^2(\Circ,\RR^2)}(t) \d t \leq  \frac{1}{m^2h^2}\,.
\end{equation}
Now,  because of Lemma \ref{bound-rep-bv} the sequence $\{\varphi_h\}$ is bounded in $\BV^2(\Circ, \Circ)$ so it converges (up to a subsequence) to some $\varphi$ with respect to the weak-* topology. Thus, by taking the limit, we get  $\gm_0 = \gm_1 \circ \phi$.

\end{proof}

The next theorem proves an existence result for geodesics.

\begin{thm}[{\bf geometric existence}]\label{localbv2}
Let $\gm_0\, \gm_1 \in \Ba^i$ such that $\mathcal{D}([\gm_0] , [\gm_1])<\infty$. Then there exists a minimizer  of $\mathcal{D}([\gm_0] , [\gm_1])$. More precisely, there exists $\tilde{\gm} \in [\gm_1]$ and $\GA\in \mathcal{P}(\gm_0,\tilde{\gm})$ such that $E(\GA)=d^2(\gm_0,\tilde{\gm}) = \mathcal{D}^2([\gm_0] , [\gm_1])$.	
\end{thm}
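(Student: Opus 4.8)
The plan is to combine the global existence result for parameterized curves (Theorem~\ref{local_existence}, or its simplified-mode strengthening Theorem~\ref{globalbv2}) with a compactness argument on the group of reparameterizations, exploiting the uniform $BV^2$-bound on constant-speed reparameterizations furnished by Lemma~\ref{bound-rep-bv}. First I would fix $\gm_0$ to be parameterized by constant speed, which is harmless by the invariance~\eqref{invariance-2} of $d$ under simultaneous reparameterization, and use this invariance to reduce $\mathcal{D}([\gm_0],[\gm_1])$ to $\inf_\phi d(\gm_0, \gm_1\circ\phi)$ over a single $\phi\in{\rm Diff}^{BV^2}(\Circ)$. Then I would take a minimizing sequence $\{\phi_h\}\subset{\rm Diff}^{BV^2}(\Circ)$ with $d(\gm_0,\gm_1\circ\phi_h)\to\mathcal{D}([\gm_0],[\gm_1])$, and for each $h$ invoke Theorem~\ref{local_existence} to obtain a minimizing geodesic $\GA^h\in\mathcal{P}(\gm_0,\gm_1\circ\phi_h)$ with $E(\GA^h)=d^2(\gm_0,\gm_1\circ\phi_h)$, which stays bounded.

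The core step is to pass to a limit in the sequence $\{\phi_h\}$. Since $E(\GA^h)$ is uniformly bounded, Proposition~\ref{global_bound_Mm} (estimate~\eqref{constantsMm}) gives uniform bounds $0<m\le m_{\GA^h(t)}\le M_{\GA^h(t)}<\infty$, and in particular the endpoint curves $\gm_1\circ\phi_h$ stay in a fixed $BV^2$-bounded set with $\mbox{essinf}\,\|(\gm_1\circ\phi_h)'\|$ bounded below — this uses~\eqref{length} and~\eqref{length2} evaluated at $t=1$. Applying Lemma~\ref{bound-rep-bv} to $\gm_1$ composed with $\phi_h$, or more directly observing that the constant-speed reparameterizations $\varphi_{\gm_1\circ\phi_h}$ are uniformly bounded in $BV^2(\Circ,\Circ)$, lets me extract a subsequence with $\phi_h$ (or the relevant constant-speed reparameterization) converging weakly-$*$ in $BV^2(\Circ,\Circ)$, hence strongly in $W^{1,1}$ and uniformly, to some limit $\phi$; one checks $\phi$ is still a homeomorphism using the lower bound on the derivatives. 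Consequently $\gm_1\circ\phi_h\to\gm_1\circ\phi=:\tilde\gm$ in $W^{1,1}(\Circ,\RR^2)$, and $\tilde\gm\in\Ba^i$ by the uniform derivative bounds and preservation of injectivity/orientation.

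It remains to produce the minimizing geodesic to $\tilde\gm$. I would now run the martingale argument from Step~1 and Step~2 of the proof of Theorem~\ref{local_existence} on the sequence $\{\GA^h\}$ directly: the endpoints $\GA^h(1)=\gm_1\circ\phi_h$ are no longer fixed, but they converge in $W^{1,1}$ to $\tilde\gm$, and the construction of $\GA^\infty$ via the $BV^2$-valued martingale $\{f_n^\infty\}$ and the strong $W^{1,1}$-convergence $\GA^h(t)\to\GA^\infty(t)$ for every $t$ (equations~\eqref{cons-length}–\eqref{conv-v}) goes through verbatim, yielding $\GA^\infty\in\mathcal{P}(\gm_0,\tilde\gm)$ with, by the lower semicontinuity~\eqref{sci} integrated against Fatou, $E(\GA^\infty)\le\liminf_h E(\GA^h)=\mathcal{D}^2([\gm_0],[\gm_1])$. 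Since $E(\GA^\infty)\ge d^2(\gm_0,\tilde\gm)\ge\mathcal{D}^2([\gm_0],[\gm_1])$, equality holds throughout, giving $E(\GA^\infty)=d^2(\gm_0,\tilde\gm)=\mathcal{D}^2([\gm_0],[\gm_1])$ as required. The main obstacle is the joint limit: one must ensure the reparameterization limit $\phi$ is admissible (a genuine $BV^2$ homeomorphism, not merely a monotone $BV^2$ map) and that the endpoint convergence $\gm_1\circ\phi_h\to\tilde\gm$ is compatible with the martingale compactness machinery — i.e. that Lemma~\ref{conv-w11} applies so the constant-speed parameterizations converge — which is exactly where the hypotheses $\gm_0,\gm_1\in\Ba^i$ and the bounds of Proposition~\ref{global_bound_Mm} are used.
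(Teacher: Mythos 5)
Your proposal is correct and follows essentially the same route as the paper's proof: reduce via the invariance~\eqref{invariance-2} to minimizing over a single reparameterization, take geodesics $\GA^h\in\mathcal{P}(\gm_0,\gm_1\circ\psi_h)$ from Theorem~\ref{local_existence}, rerun the martingale construction on $\{\GA^h\}$ with moving endpoints, and use the $BV^2$-bound on $\{\psi_h\}$ (Lemma~\ref{bound-rep-bv}) plus weak-$*$ compactness to identify the limit endpoint as $\gm_1\circ\psi_\infty$, concluding by the semicontinuity inequality~\eqref{sci} and Fatou. The only cosmetic difference is the order of operations (you extract the limit reparameterization before running the martingale argument, the paper after), which does not change the substance.
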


\begin{proof} In the following we denote by $\gm_0$ and $\gm_1$ the parameterizations by the constant speed parameterization.
Because of the invariance~\eqref{invariance-2} we can write 
$$\mathcal{D}([\gm_0] , [\gm_1]) = \inf\, \{ d(\gm_0, \gm_1 \circ \psi ) \,: \,\psi \in {\rm{Diff}}^{BV^2}(\Circ)\}.$$
We consider a sequence $\{\psi_h\} \subset {\rm{Diff}}^{BV^2}(\Circ)$ such that $ d(\gm_0, \gm_1 \circ \psi_h )\rightarrow \mathcal{D}([\gm_0] , [\gm_1])$ and we suppose that $\sup_h d(\gm_0, \gm_1 \circ \psi_h ) < \infty$.  By Theorem \ref{local_existence}, for every $h$, there exists a geodesic  $\GA^h$  between  $\gm_0$ and $\gm_1\circ \psi_h$  such that $d^2(\gm_0, \gm_1 \circ \psi_h ) =E(\GA^h)$. 
We show that there exists  $\psi_\infty \in {\rm{Diff}}^{BV^2}(\Circ)$ such that $ \mathcal{D}([\gm_0] , [\gm_1]) = d(\gm_0, \gm_1 \circ \psi_\infty)$. 
\par By the same arguments used in the proof of Theorem~\ref{local_existence}, we can define (see Step 1) a path $\GA^\infty\in H^1([0,1],\Bb)$ such that 
\begin{equation}\label{convergence-path}\int_0^1 \GA^h_t(t)(s) \d t \rightarrow \int_0^1 \GA^\infty_t(t)(s) \d t\,\end{equation}
and (see Step 2)
\begin{equation}\label{convergence-path20}
\GA^h(t) \overset{W^{1,1}(\Circ,\RR^2)}{\longrightarrow} \GA^\infty(t) \quad \forall\,t\in [0,1]\,,
\end{equation}
\begin{equation}\label{convergence-path2}
\GA^h_t(t) \overset{W^{1,1}(\Circ,\RR^2)}{\longrightarrow} \GA^\infty_t(t) \quad \forall\,t\in [0,1].
\end{equation}
Remark that, because of \eqref{flow},  we have 
\begin{equation}\label{flow-path}
\int_0^1 \GA_t^h(t)(s)\d t = \gm_1\circ \psi_h(s)-\gm_0(s)\quad\quad  \forall \, s\in \Circ\,.
\end{equation}
Now, analogously to \eqref{bound}, this guarantees a bound on $\{\gm_1\circ \psi_h\}$ and, because of Lemma \ref{bound-rep-bv}, the sequence $\{\psi_h\}$ is bounded in $\BV^2(\Circ, \Circ)$ so it converges (up to a subsequence) to some $\psi_\infty$ with respect to the weak-* topology. Thus, we have  

 $$\gm_1\circ\psi_h\overset{W^{1,1}(\Circ,\RR^2)}{\rightharpoonup}\gm_1\circ\psi_\infty\,.$$
\par Now, as $\GA^h(0)=\gm_0$ and $\GA^h(1)= \gm_1\circ \psi_h$, from \eqref{convergence-path20}, it follows 
 $$\GA^\infty(0) = \gm_0\,,\quad \GA^\infty(1) = \gm_1\circ\psi_\infty\,$$
which imply that $\GA^\infty\in \mathcal{P}(\gm_0,\gm_1\circ\psi_\infty)$. Moreover,  denoting  by $\varphi_{\GA^h(t)}$ and $ \varphi_{\GA^\infty(t)}$ the constant speed parameterization of $\GA^h(t)$ and $\GA^\infty(t)$ respectively, by~\eqref{convergence-path20} and Lemma \ref{conv-w11}, $\varphi_{\GA^h(t)}$ converges to $ \varphi_{\GA^\infty(t)}$ in $W^{1,1}(\Circ,\Circ)$. Then, similarly to \eqref{sci}, we get  
$$\| \GA^{\infty}_t(t)\|_{BV^2(\GA^\infty(t))} 
\leq \underset{h\rightarrow \infty}{\liminf} \, \|\GA^{h}_t(t)\|_{BV^2(\GA^h(t))}\,.
$$
By integrating the previous inequality and using Fatou's lemma we get 
$$E(\GA^\infty)\leq \lim_{h\rightarrow \infty} E(\GA^h)=\lim_{h\rightarrow \infty} d^2(\gm_0,\gm_1\circ\psi_h)=\mathcal{D}^2([\gm_0] , [\gm_1]) \,,$$
which implies that $\mathcal{D}^2([\gm_0] , [\gm_1]) = d^2(\gm_0, \gm_1 \circ \psi_\infty)=E(\GA^\infty)$.
\par 
\end{proof}


\section{Geodesics in the Space of Sobolev Curves}\label{Hs}

In this section, we study the geodesic boundary value problem in the class of curves belonging to $\hs$ with $k\geq 2$ integer.

\begin{simplified}
 We recall that, by identifying $\Circ$ with $[0,1]$,  $u\in H^k(\Circ,\R)$ if $u$ is 1-periodic and 
$$\|u\|_{H^k(\Circ,\R)}^2 = \sum_{n\in \mathbb{Z}} (1+n^{2s})|\hat f(n)|^2 \,<\, \infty\,$$
with 
$$\hat f(n) = \int_{\Circ} f(s) e^{-\imath ns} \d s\,.$$
Moreover, $\|u\|_{H^k(\Circ,\R)}$ defines a norm and $H^k(\Circ,\R)$ equipped with such a norm is a Hilbert space. 
\end{simplified}
We remind the continuous embedding 
\begin{equation}\label{embedding1}
H^k(\Circ,\RR^2) \hookrightarrow C^{1}(\Circ,\RR^2) \quad \foralls \, k\geq 2\,.
\end{equation}



In this framework, the proof of  existence of geodesics is simpler because we can use the compactness properties of the Bochner space of paths with respect to the weak topology. 


 
We define the class of the parameterized $H^k$-curves as follows. 

\begin{defn}
[{\bf $H^k$-curves}]\label{Hscurves}
We define $\Ha$ as the class  of counterclockwise oriented curves belonging to $H^k( \Circ, \R^2)$ ($k\geq 2$ integer)  and such that for any $\gamma \in \Ha$, $\gamma'(s) \neq 0$ for all $s \in \Circ$. 
For every $\gm\in \Ha$ the constant speed parameterization $\varphi_\gm$ can be defined as in Remark \ref{arc-len}. 
\end{defn}

\begin{rem}[{\bf reparameterizations}]
In this section, we denote by 	${\rm Diff}_0^k(\Circ)$ the connected component of identity of the Sobolev diffeomorphisms group  $\varphi\in H^k(\Circ,\Circ)$ such that $\varphi^{-1} \in H^k(\Circ,\Circ)$.
As in the previous section, we will use weak topologies on the lifts of these diffeomorphisms and the following lemma. Finally, the chosen topology on ${\rm Diff}_0^k(\Circ)$ is the pullback topology of the Sobolev norm by the lift  $\varphi \mapsto \tilde{\varphi} - \Id \in H^k(\Circ,\R)$.
\end{rem}

\begin{lem}\label{BoundedReparametrizations}
Let $\varepsilon$ and $M$ be two positive real numbers. There exists a positive constant $D = D(\varepsilon,M)$ such that 
if $\gm \in B$ is such that $\| \gm'(s) \| \geq \varepsilon >0$ and $\| \gm \|_{H^k(\Circ,\R^2)} \leq M$, then the reparameterization $\phi_\gm$ satisfies
$ \| \phi_\gm \|_{H^k(\Circ,\Circ)} \leq D$. 
\end{lem}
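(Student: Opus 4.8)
The plan is to mimic the two-stage structure of Lemma~\ref{bound-rep-bv}: first bound $s_\gm$ in $H^k$, then pass to its inverse $\phi_\gm=s_\gm^{-1}$. The only real change is that the $BV$ chain rule is replaced by the fact that $H^{k-1}(\Circ)$ is a Banach algebra and is stable under composition with smooth functions bounded away from singularities; this is legitimate since $k\ge 2$, so $k-1\ge 1>\tfrac12$.

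\textbf{Step 1 (length).} From $\|\gm'(s)\|\ge\varepsilon$ one gets $\len(\gm)=\int_\Circ\|\gm'\|\,\d s\ge\varepsilon$, while the embedding \eqref{embedding1} gives $\|\gm'\|_{L^\infty(\Circ,\R^2)}\le c_k\|\gm\|_{H^k(\Circ,\R^2)}\le c_kM$. Hence $\varepsilon\le\len(\gm)\le c_kM$, with $c_k$ the embedding constant. \textbf{Step 2 ($s_\gm$).} Since $\gm'\in H^{k-1}(\Circ,\R^2)$ and $H^{k-1}$ is a Banach algebra, $\|\gm'\|^2=\gm'\cdot\gm'\in H^{k-1}$ with $\|\,\|\gm'\|^2\,\|_{H^{k-1}}\le C(M)$; it takes values in $[\varepsilon^2,(c_kM)^2]$, so composing with $x\mapsto\sqrt{x}$, which is smooth on that interval, yields $\|\gm'\|\in H^{k-1}$ with $\|\,\|\gm'\|\,\|_{H^{k-1}}\le C_1(\varepsilon,M)$. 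Dividing by the constant $\len(\gm)\ge\varepsilon$ gives $s_\gm'=\|\gm'\|/\len(\gm)\in H^{k-1}$ with controlled norm and, by Step~1, $\delta\le s_\gm'\le\Delta$ where $\delta:=\varepsilon/(c_kM)$ and $\Delta:=c_kM/\varepsilon$. As $\|s_\gm\|_{L^\infty(\Circ,\Circ)}\le 1$, we obtain $\|s_\gm\|_{H^k(\Circ,\Circ)}\le D_1(\varepsilon,M)$.

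\textbf{Step 3 (inversion).} By Remark~\ref{arc-len}, $\phi_\gm=s_\gm^{-1}$ is a homeomorphism and $\phi_\gm'=1/(s_\gm'\circ\phi_\gm)$ a.e.; the bounds $\delta\le s_\gm'\le\Delta$ then force $1/\Delta\le\phi_\gm'\le1/\delta$, so $\phi_\gm$ is bi-Lipschitz with constants depending only on $(\varepsilon,M)$, in particular $\phi_\gm\in H^1$. Put $h:=1/s_\gm'$; since $x\mapsto1/x$ is smooth on $[\delta,\Delta]$, $h\in H^{k-1}$ with $\|h\|_{H^{k-1}}\le G(\varepsilon,M)$. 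Now bootstrap by induction on $j\in\{1,\dots,k\}$ the claim that $\|\phi_\gm\|_{H^j}$ is bounded by a function of $(\varepsilon,M)$ only. The base case $j=1$ is done. For the inductive step with $j<k$: $\phi_\gm\in H^j$ is a diffeomorphism with bi-Lipschitz constants controlled by $(\varepsilon,M)$, and $h\in H^{k-1}\subset H^j$, so the classical composition estimate for integer-order Sobolev spaces on $\Circ$ gives $h\circ\phi_\gm\in H^j$ with norm bounded in terms of $\|h\|_{H^j}$, $\|\phi_\gm\|_{H^j}$ and the bi-Lipschitz constants. (This follows from the Fa\`a di Bruno formula together with the algebra property of $H^{j-1}$ and the embedding $H^{j-1}\hookrightarrow L^\infty$ for $j\ge 2$, the case $j=1$ being an $L^2\cdot L^\infty$ bound after the change of variables $\sigma=\phi_\gm(t)$.) Therefore $\phi_\gm'=h\circ\phi_\gm\in H^j$, i.e.\ $\phi_\gm\in H^{j+1}$, with the required bound. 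Taking $j=k-1$ gives $\phi_\gm\in H^k(\Circ,\Circ)$ with $\|\phi_\gm\|_{H^k(\Circ,\Circ)}\le D(\varepsilon,M)$.

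I expect the only delicate point to be Step~3: ensuring that the composition (equivalently, inversion) estimate produces a final $H^k$-bound depending on $\varepsilon$ and $M$ alone. This is essentially the well-known boundedness of composition and inversion on $\mathrm{Diff}^k(\Circ)$, and amounts to bookkeeping of constants through the Fa\`a di Bruno expansion; no analytic difficulty arises beyond the Banach-algebra structure of $H^{k-1}$ already used in Step~2.
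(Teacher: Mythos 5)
Your proof is correct, and it reaches the conclusion by a genuinely different route at the decisive step. The first half coincides with the paper's argument: both bound $s_\gm$ in $H^k$ by composing with a smooth function on a region where no singularity occurs (the paper composes $\gm'$, which takes values in an annulus, with $x\mapsto\|x\|$; you go through $\|\gm'\|^2=\gm'\cdot\gm'$ via the Banach-algebra property of $H^{k-1}$ and then compose with $\sqrt{\cdot}$ — same mechanism, namely boundedness of smooth composition on bounded sets with range in a compact set away from the singularity). The difference is in passing to the inverse. The paper does not redo any estimate: it invokes Lemma 2.8 of Inci--Kappeler--Topalov, which asserts that inversion on $\mathcal{D}^s(\R^d)$ is locally bounded (bounded image of sets with $\|\phi-\Id\|_{H^s}\le M'$ and $\det D\phi>\varepsilon'$), and transfers this from $\R$ to $\Circ$ by lifting $\phi_\gm$ and multiplying by a smooth cutoff $m$, checking that the lifted map $\Psi(\phi)=\Id+m(\tilde\phi-\Id)$ has equivalent norm and that $\Psi(\phi)^{-1}=\Psi(\phi^{-1})$ on $[-2,2]$. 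You instead re-derive the inversion bound from scratch: writing $\phi_\gm'=h\circ\phi_\gm$ with $h=1/s_\gm'\in H^{k-1}$ and bootstrapping $\phi_\gm\in H^j\Rightarrow h\circ\phi_\gm\in H^j\Rightarrow\phi_\gm\in H^{j+1}$, with the composition estimate controlled by Fa\`a di Bruno, the algebra property of $H^{j-1}$, the embedding $H^{j-1}\hookrightarrow L^\infty$ for $j\ge2$, and a change of variables using the bi-Lipschitz bounds for the $m=j$ and $j=1$ terms; all constants visibly depend only on $(\varepsilon,M)$. This is exactly the content of the inversion/composition boundedness that the paper imports from the literature, so nothing is missing; your version is more elementary and self-contained (no cutoff-lift transfer, no external citation, works directly on $\Circ$ for integer $k$), at the price of the Fa\`a di Bruno bookkeeping, whereas the paper's citation covers the general $\mathcal{D}^s(\R^d)$, including non-integer $s$, with no extra work beyond the lift trick.
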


\begin{proof}
Recall that the reparameterization $\phi_\gm$ is the inverse of $s_\gm$ defined  $s_\gm(s) =\frac{1}{\len(\gm)} \int_{s_0}^s \,|\gm'(t)| \,dt$,
where $s_0 \in \Circ$ is a chosen basepoint. We thus have $s_\gm'(x) > \frac{\varepsilon }{ \len(\gm)} $.  Using \eqref{embedding-sob} below, $\| \gm' \|$ is bounded by 
$C M$. Therefore, $x \in \R^2 \mapsto \| x \| \in \R$ being a smooth function on $B(0,\alpha M) \setminus B(0, \frac{\varepsilon }{ \len(\gm)} )$, there exists a constant $\beta$ such that $\| s_\gm \|_{H^k(\Circ,\Circ)} \leq \beta M$. 

 In Lemma 2.8 in \cite{Inci2013}, it is proven that the inversion on $\mathcal{D}^s(\R^d)$ is continuous and a locally bounded map when $s>d/2 +1$. Their proof actually shows that if $\phi \in \mathcal{D}^s$ is bounded in $H^s(\R^d,\R^d)$, namely, $\| \Id - \phi \|_{H^s(\R^d,\R^d)} \leq M'$ and $\det D\phi > \varepsilon'$ then $\| \Id - \phi^{-1} \|_{H^s(\R^d,\R^d)} \leq D'(M',\varepsilon')$.  The same proof would be valid in our situation replacing $\R^d$ by $\Circ$, however, we present a simple argument to apply their result.
Let $m:\R \to \R$ be a smooth map such that $m(x) = 1$ if $x \in [-2,2]$ and $m(x) = 0$ if $x \notin ]-3,3[$. Then the map $ \Psi:  {\rm Diff}_0(\Circ) \to \mathcal{D}^k(\R)$ defined by 
$\Psi(\phi) = \Id + m(\tilde{\phi} - \Id)$ satisfies 
\begin{equation}
c \| \tilde{\phi} - \Id \|_{H^k(\Circ,\R)} \leq \|\Psi(\phi) - \Id  \|_{H^k(\R,\R)} \leq c' \| \tilde{\phi} - \Id \|_{H^k(\Circ,\R)} \,,
\end{equation}
where $c,c'$ are some positive constants. The first inequality is clear while the second is obtained because $H^k(\R)$ is a Hilbert algebra since $k>1/2$.
Lemma 2.8 in \cite{Inci2013} implies that $\Psi(\phi)^{-1}$ is bounded in $\mathcal{D}^k$ and $\Psi(\phi)^{-1}$ is equal to $\Psi(\phi^{-1})$ on $[-2,2]$, which implies the result.
\end{proof}

From~\eqref{embedding1} it follows that  there exists a constant $C$ such that 
\begin{equation}\label{embedding-sob}
	\|\gm'\|_{L^\infty(\Circ,\R^2)}\leq   C\|\gm\|_{\hs} \quad \foralls \gm \in \Ha \,.
\end{equation}
Moreover, it is easy to verify that $\Ha$ is an open set of $\hs$. 


As in the previous section we define the Hilbert space $\hsg$ as the space $\hs$, where integration and derivation are performed with respect to the measure $\d \gm = |\gm'| \d s$. 
The tangent space at $\gm \in \Ha$ is endowed with the corresponding norm and is denoted by $H^k(\gm)$.
More explicitely, we have, denoting $\langle \cdot, \cdot \rangle$ the usual scalar product on $\R^2$,
\begin{equation}
\| f \|_{\hsg}^2 = \int_{\Circ} \langle f, f\rangle \, \d\gm + \int_{\Circ} \left\langle \frac{\d^k}{\d\gm^k}f, \frac{\d^k}{\d\gm^k}f \right\rangle\, \d\gm\,.
\end{equation}
This defines a smooth Riemannian metric on $\hs$ (see \cite{Bruveris}).

As in the previous section, for every $\gm_0,\gm_1\in \Ha$ we consider the class of paths $\GA \in H^1([0,1],\Ha)$ such that $\GA(0)=\gm_0$ and $\GA(1)=\gm_1$. The energy of a path is defined as  $$E(\GA)=\int_0^1 \|\GA_t(t)\|_{H^k(\GA(t))}^2\,\d t$$ and the geodesic distance $d(\gm_0,\gm_1)$ is defined accordingly (see Definition~\ref{defn-geodesic-paths}).

Moreover, Lemma 5.1 in \cite{Bruveris} proves the equivalence of the norms of $\hs$ and $\hsg$. The result states that, for every $\gm_0\in \Ha$, there exists a constant $C=C(\gm_0, D) > 0$ such that 
\begin{equation}\label{bound-sob-norms}
\frac 1 C \| f \|_{\hs} \leq  \| f \|_{\hsg} \leq C \| f \|_{\hs}\,
\end{equation}
for every $\gm \in \Ha$ such that $d(\gm_0,\gm)< D$. This proves in particular that the constant $C$ is uniformly bounded on every geodesic ball. 

Finally, in order to compare the $H^k$-norm after reparameterization, we remark that 
\begin{equation}\label{change-sob}
\begin{array}{c} 
\displaystyle{ 
\| f \|_{\hsg} = \| f\circ\varphi_\gm \|_{\hs}^{\gm}}\,, \vspace{0.3cm}\\
\displaystyle{(\| f \|_{\hs}^{\gm})^2 = \len(\gm)\| f \|^2_{\leb{2}} +  \len(\gm)^{1 - 2k}\|f^{(k)}\|^2_{\leb{2}} \quad \quad \forall \, f\in \hs.}
\end{array}
\end{equation}

We now prove an existence result for geodesics in the Sobolev framework.

\begin{thm}[{\bf existence}]\label{existence_sobolev}
Let $\gm_0, \gm_1 \in \Ha$ such that $d(\gm_0, \gm_1)<\infty$.
Then, there exists a geodesic between $\gm_0$ and $\gm_1$.
\end{thm}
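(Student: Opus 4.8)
\emph{Proof strategy.} The plan is to run the direct method as in Theorem~\ref{local_existence}, but now exploiting the reflexivity (in fact Hilbert structure) of the Bochner space $H^1([0,1],H^k(\Circ,\R^2))$ rather than the martingale construction. I would start from a minimizing sequence $\{\GA^h\}\subset\mathcal P(\gm_0,\gm_1)$ with $E(\GA^h)\to d^2(\gm_0,\gm_1)$ and, without loss of generality, $e:=\sup_h E(\GA^h)<\infty$. Restricting $\GA^h$ to $[0,t]$ (and rescaling) gives $d^2(\gm_0,\GA^h(t))\le E(\GA^h)\le e$, so every intermediate curve of every $\GA^h$ lies in the fixed geodesic ball $\{\gm:d(\gm_0,\gm)\le\sqrt e\}$; on this ball the equivalence constant $C$ in~\eqref{bound-sob-norms} is uniform, and the Sobolev counterpart of Lemma~\ref{no-collapse} (whose proof carries over verbatim) furnishes $h,t,s$-uniform bounds $0<c\le\|(\GA^h)'(t)(s)\|\le c'$ and $0<\ell_-\le\len(\GA^h(t))\le\ell_+$. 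Hence $\int_0^1\|\GA^h_t(t)\|_{H^k(\Circ,\R^2)}^2\,\d t\le C^2E(\GA^h)\le C^2e$ and $\|\GA^h(t)\|_{H^k(\Circ,\R^2)}\le\|\gm_0\|_{H^k(\Circ,\R^2)}+C\,E(\GA^h)^{1/2}$, so $\{\GA^h\}$ is bounded in $H^1([0,1],H^k(\Circ,\R^2))$; passing to a subsequence, $\GA^h\rightharpoonup\GA^\infty$ weakly there and $\GA^h_t\rightharpoonup\GA^\infty_t$ weakly in $L^2([0,1],H^k(\Circ,\R^2))$, with $\GA^\infty(0)=\gm_0$ and $\GA^\infty(1)=\gm_1$ by continuity of point evaluation.

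Next I would upgrade this to usable strong convergence. The $H^1$-bound makes $\{\GA^h\}$ uniformly H\"older continuous of exponent $\tfrac12$ in time with values in $H^k(\Circ,\R^2)$, and since $k\ge2$ the embedding $H^k(\Circ,\R^2)\hookrightarrow\hookrightarrow C^1(\Circ,\R^2)$ is compact, so an Arzel\`a--Ascoli (equivalently Aubin--Lions) argument gives $\GA^h\to\GA^\infty$ strongly in $C^0([0,1],C^1(\Circ,\R^2))$. Passing to the limit in $\|(\GA^h)'(t)(s)\|\ge c$ and in the orientation shows $\GA^\infty(t)\in\Ha$ for every $t$, so $\GA^\infty\in\mathcal P(\gm_0,\gm_1)$ is an admissible competitor. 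The $C^1$-convergence of the curves also forces $s_{\GA^h(t)}\to s_{\GA^\infty(t)}$, and hence $\varphi_{\GA^h(t)}=s_{\GA^h(t)}^{-1}\to\varphi_{\GA^\infty(t)}$, in $C^1$ and uniformly in $t$, while Lemma~\ref{BoundedReparametrizations} keeps the $\varphi_{\GA^h(t)}$ uniformly bounded in $H^k(\Circ,\Circ)$.

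It then remains to prove $E(\GA^\infty)\le\liminf_h E(\GA^h)$, which is the crux. The naive route --- convexity of $v\mapsto\|v\|_{H^k(\gm)}^2$ for frozen $\gm$ --- fails here because this norm depends on derivatives of $\gm$ up to order $k$, whereas the compactness only delivers $C^1$-convergence of $\GA^h(t)$, a gap of derivatives already when $k=2$. Instead I would pass to arc length: by~\eqref{change-sob}, $E(\GA^h)=\int_0^1\big[\ell^h(t)\,\|w^h(t)\|_{L^2}^2+(\ell^h(t))^{1-2k}\,\|(w^h(t))^{(k)}\|_{L^2}^2\big]\,\d t$ with $\ell^h(t)=\len(\GA^h(t))$ and $w^h(t)=\GA^h_t(t)\circ\varphi_{\GA^h(t)}$, so that the $\GA$-dependence of the integrand collapses to the single scalar $\ell^h$, which converges uniformly. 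The $\ell_\pm$-bounds keep $\{w^h\}$ bounded in $L^2([0,1],H^k(\Circ,\R^2))$; testing the identity $\GA^h_t(t)=w^h(t)\circ s_{\GA^h(t)}$ against smooth fields and invoking the uniform $C^1$-convergence of the reparametrizations (a weak-times-strong argument) identifies the weak limit as $w^h\rightharpoonup w^\infty:=\GA^\infty_t(\cdot)\circ\varphi_{\GA^\infty(\cdot)}$ in $L^2([0,1],H^k(\Circ,\R^2))$, in particular $(w^h)^{(k)}\rightharpoonup(w^\infty)^{(k)}$ weakly in $L^2([0,1]\times\Circ,\R^2)$. After replacing $\ell^h$ by $\ell^\infty:=\len(\GA^\infty(\cdot))$ (an $o(1)$ change, by the uniform bounds on $\ell^h$), the functional $w\mapsto\int_0^1\big[\ell^\infty(t)\,\|w(t)\|_{L^2}^2+(\ell^\infty(t))^{1-2k}\,\|w(t)^{(k)}\|_{L^2}^2\big]\,\d t$ is nonnegative, convex and continuous on $L^2([0,1],H^k(\Circ,\R^2))$, hence weakly lower semicontinuous, and~\eqref{change-sob} then yields $E(\GA^\infty)\le\liminf_h E(\GA^h)=d^2(\gm_0,\gm_1)$. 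Since $\GA^\infty\in\mathcal P(\gm_0,\gm_1)$, this is an equality and $\GA^\infty$ is the desired minimizing geodesic. The single genuinely delicate point is this reparametrization-based lower semicontinuity; everything else is the standard direct method, made to run by the uniform norm equivalence~\eqref{bound-sob-norms} and the Sobolev length control of Lemma~\ref{no-collapse}.
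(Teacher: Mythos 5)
Your proposal is correct and follows essentially the same route as the paper: a minimizing sequence bounded in $H^1([0,1],H^k(\Circ,\R^2))$ via the uniform norm equivalence \eqref{bound-sob-norms}, strong convergence of the curves by a compactness argument (the paper uses $H^1([0,1],H^k)\hookrightarrow\hookrightarrow C([0,1],H^{k-1})$, you use Arzel\`a--Ascoli with $H^k\hookrightarrow\hookrightarrow C^1$, which is the same mechanism), boundedness of the constant-speed reparameterizations via Lemma~\ref{BoundedReparametrizations}, weak convergence of the reparameterized velocities, and lower semicontinuity through \eqref{change-sob}. The only cosmetic difference is that you obtain the semicontinuity globally in time by convexity in $L^2([0,1],H^k)$ with the length factor frozen, whereas the paper argues pointwise in $t$ and then applies Fatou; both are valid and rest on the same ingredients.
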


\begin{proof} 
Let $\{\GA^h\}$ be a minimizing homotopy sequence such that $E(\GA^h) < D^2$, where $D >d(\gm_0,\gm_1) $. 
Because of \eqref{bound-sob-norms} there exists a positive constant $M  = C(\gm_0,D)^2$ such that  
\begin{equation}\label{lower-velocity0}
\int_0^1 \|\GA_t^h(t)\|^2_{\hs}\,\d t\leq \int_0^1 M \|\GA_t^h(t)\|^2_{\hsgt}\,\d t \leq  M E(\GA)\,.
\end{equation}
This implies  that  $\GA^h_t$ is uniformly bounded in  $L^2([0,1],\hs)$ and, because of the boundary conditions, that  $\GA^h$ is uniformly bounded in  $H^1([0,1],H^k(\Circ,\R^2))$. 

Therefore there exists a subsequence of $\{\GA^h\}$ that weakly converges in   $H^1([0,1],H^k(\Circ,\R^2))$.
Since the embedding 
\eq{
	H^1([0,1],H^k(\Circ,\R^2))\subset C([0,1], H^{k-1}(\Circ,\R^2))
}
is compact, there exists another subsequence (not relabeled)  that converges to a path $\GA^\infty$ belonging to $L^2([0,1], C(\Circ, \R^2))$,
$$\GA^h \rightarrow \GA^\infty \quad \mbox{in}\quad C([0,1],H^{k-1}(\Circ,\R^2))\,.$$
This proves in particular that 
$$ 
	\GA^h(t) \overset{W^{1,1}(\Circ, \RR^2)}{\rightarrow} \GA^\infty(t) \; \foralls t\in [0,1] \,
	\qandq
 	\GA^h_t(t) \overset{H^k(\Circ, \RR^2)}{\rightharpoonup} \GA^\infty_t(t) \; \foralls t\in [0,1] \,.
$$
Now, as   $\GA^h(t)$ converges in $W^{1,1}(\Circ, \RR^2)$ towards $\GA^\infty(t)$, the constant speed reparameterizations at time $t$, denoted respectively by  $\phi_{\GA^h(t)}$ and $\phi_{\GA^\infty(t)}$, also converge in $W^{1,1}(\Circ, \Circ)$.

Using Lemma \ref{BoundedReparametrizations}, we have that $\phi_{\GA^h(t)}$ are (uniformly w.r.t. $t$ and $h$) bounded in $H^k(\Circ,\Circ)$ so that by a direct adaptation of Lemma 2.7 in \cite{Inci2013} (or the argument developed in Lemma \ref{BoundedReparametrizations}), the sequence $\Ga_t^h \circ \phi_{\GA^h(t)}$ is (uniformly) bounded in $H^k(\Circ,\R^2)$.
Thus, by the same arguments used to prove~\eqref{sci2} we obtain weak convergence on a dense subset. Since the sequence is bounded, the weak convergence follows: 
$$\GA^h_t\circ \phi_{\GA^h(t)}\overset{\hs}{\rightharpoonup}\GA^\infty_t\circ\phi_{\GA^\infty(t)} \quad \foralls \,t\in [0,1]\,.$$

Now, because of \eqref{change-sob}, we have
$ \|\GA^{h}_t(t)\|_{\hsgt}= \| \GA^h_t \circ \phi^h (t)\|^{\GA^h(t)}_{\hs}$. Recall that, because of the strong convergence in $W^{1,1}(\Circ, \Circ)$ of the  constant speed parameterizations we have $\len(\GA^h(t))\rightarrow \len(\GA^\infty(t))$ for every $t$. 
Then, for every $t$, we get
$$
\begin{array}{lll}
	\| \GA^{\infty}_t(t)\|_{\hsgt} &=  \| \GA^\infty_t \circ \phi^\infty(t) \|^{\GA^{\infty}(t)}_{\hs}   
	&\leq \underset{h\rightarrow \infty}{\liminf}  \| \GA^h_t \circ \phi^h (t)\|^{\GA^h(t)}_{\hs}	\vspace{0.2cm} \\
&=\underset{h\rightarrow \infty}{\liminf} \|\GA^{h}_t(t)\|_{\hsgt}\,.&\\
\end{array}
$$

By integrating the previous inequality and using Fatou's lemma we get that $\GA^\infty$ minimizes $E$ and the theorem ensues.
\end{proof}

In ~\cite{Bruveris}, the authors prove (Theorem 1.1) that the space of immersed curves is geodesically complete with respect to the $H^k$-metrics ($k\geq 2$ integer). 
Since $H^k$-metrics ($k\geq 2$ integer) are smooth Riemannian metrics, minimizing geodesics are given locally by the exponential map.
Moreover, from Theorem \ref{existence_sobolev}, we have the existence of minimal geodesics (in our variational sense) between any two points. Therefore, the minimizing curve found by our variational approach coincides with an exponential ray. Thus, we have the following corollary.

\begin{cor}[\bf surjectivity of the exponential map] 
The exponential map on $\Ha$ for $k \geq 2$ integer is defined for all time and is surjective.
\end{cor}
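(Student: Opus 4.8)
The assertion has two halves: that $\exp_{\gm}$ is defined on the whole tangent space $H^k(\gm)$ for every base curve $\gm\in\Ha$, and that its image is as large as possible. The first half is precisely \emph{geodesic completeness} and is nothing but Theorem 1.1 of~\cite{Bruveris}: since for $k\geq 2$ integer the $H^k$-metric is a \emph{smooth, strong} Riemannian metric on the Hilbert manifold $\Ha$ --- strong because, by~\eqref{bound-sob-norms} (see also Proposition~\ref{equiv-norms-sob}), the norm it induces on each $T_\gm\Ha$ is equivalent to the ambient $H^k$-norm --- the geodesic spray is a smooth vector field on $T\Ha$, and \cite{Bruveris} proves its integral curves are complete. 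Consequently, for every $\gm\in\Ha$ and every $v\in H^k(\gm)$ the geodesic with initial data $(\gm,v)$ is defined on all of $\RR$, so $\exp_{\gm}(v)$ makes sense; this is the ``defined for all time'' part.

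For surjectivity, fix $\gm_0\in\Ha$ and let $\gm_1$ be any curve with $d(\gm_0,\gm_1)<\infty$. First I would invoke Theorem~\ref{existence_sobolev} to produce a minimizing geodesic $\GA\in\mathcal{P}(\gm_0,\gm_1)$ with $E(\GA)=d^2(\gm_0,\gm_1)$. The crucial step is then to upgrade $\GA$ --- a priori only a path in $H^1([0,1],\Ha)$ --- to a \emph{classical} geodesic, i.e.\ a solution of the geodesic equation $\nabla_t\GA_t=0$. This uses exactly the smooth, strong structure: the energy $E$ is then a smooth function on the Hilbert manifold of $H^1$-paths with fixed endpoints (which stay in the open set $\Ha$), so its minimizer is an interior critical point, and the first variation of $E$ identifies critical points with solutions of the geodesic ODE; smoothness of the spray then upgrades $\GA$ to a $C^\infty$ curve in $\Ha$ of the form $t\mapsto\exp_{\gm_0}(t\,\GA_t(0))$. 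In particular $\gm_1=\GA(1)=\exp_{\gm_0}(\GA_t(0))$ lies in the image of $\exp_{\gm_0}$.

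It remains to observe that the hypothesis $d(\gm_0,\gm_1)<\infty$ is exactly the statement that $\gm_1$ lies in the connected component of $\gm_0$: since $\Ha$ is an open subset of the Hilbert space $H^k(\Circ,\RR^2)$, any two curves in one component are joined by a polygonal --- hence Lipschitz-in-$t$, therefore finite-energy --- path inside $\Ha$, along which the metric stays uniformly comparable to the flat one; conversely any geodesic ray $t\mapsto\exp_{\gm_0}(tv)$, $t\in[0,1]$, has finite length $\|v\|_{H^k(\gm_0)}$, so curves at infinite distance are genuinely unreachable. Hence $\exp_{\gm_0}$ maps $H^k(\gm_0)$ onto the connected component of $\gm_0$, and since $\gm_0$ was arbitrary this is the surjectivity claimed.

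\textbf{Main obstacle.} The one non-formal point is the upgrade of the variational minimizer of Theorem~\ref{existence_sobolev} to a bona fide solution of the geodesic equation. This is where the \emph{strong} (rather than merely weak) character of the $H^k$-metric is indispensable: it is what makes $E$ smooth on the path manifold and the first-variation/Euler--Lagrange computation legitimate, a feature that fails for weak metrics such as $L^2$. Granting this standard fact, the corollary is simply the conjunction of completeness (\cite{Bruveris}) with the variational existence theorem (Theorem~\ref{existence_sobolev}).
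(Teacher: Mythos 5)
Your proposal is correct and follows essentially the same route as the paper: geodesic completeness from Theorem 1.1 of~\cite{Bruveris} gives that $\exp$ is defined for all time, and combining the variational existence of minimizing geodesics (Theorem~\ref{existence_sobolev}) with the smoothness/strength of the $H^k$-metric identifies the minimizer with an exponential ray, yielding surjectivity. The extra details you supply (the first-variation upgrade of the minimizer to a classical geodesic, and the connected-component discussion) are exactly the steps the paper leaves implicit.
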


The so-called Fr\'echet or K\"archer mean, often used in imaging \cite{Pennec}, is a particular case of minimizers of the distance to a closed subset. The surjectivity of the exponential map enables the use of~\cite[Theorem 3.5]{Azagra}, which proves that the projection onto a closed subset is unique on a dense subset.
A direct theoretical consequence of this surjectivity result is the following result. 

\begin{prop}
Let $k\geq 2$ be an integer.
For any integer $n\geq 1$, there exists a dense subset $D \subset \Ha^n$, such that the K\"archer mean associated with any $ (\gm_1,\ldots,\gm_n) \in D$, defined as a minimizer of 
\begin{equation}
	\min_{\gm \in \Ha} \;  \sum_{i=1}^n d(\gm , \gm_i)^2\,,
\end{equation}
is unique.
\end{prop}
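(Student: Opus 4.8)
The strategy is to combine the surjectivity (and all-time definition) of the exponential map, established in the preceding corollary, with the abstract genericity-of-nearest-points theorem from Azagra--Ferrera~\cite{Azagra} (their Theorem~3.5). First I would record the structural setup: the Karcher mean associated with $(\gm_1,\dots,\gm_n)$ is by definition a minimizer of $F_{(\gm_1,\dots,\gm_n)}(\gm)=\sum_{i=1}^n d(\gm,\gm_i)^2$ over $\Ha$. The key observation is that finding a minimizer of this functional can be recast as a nearest-point problem: consider the product manifold $\Ha^{n}$ endowed with the product Riemannian metric (so that its squared geodesic distance is $d_{\mathrm{prod}}^2\big((\gm,\dots,\gm),(\gm_1,\dots,\gm_n)\big)=\sum_{i=1}^n d(\gm,\gm_i)^2$), and let $\Delta=\{(\gm,\dots,\gm):\gm\in\Ha\}$ be the diagonal, which is a closed submanifold. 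Minimizing $F_{(\gm_1,\dots,\gm_n)}$ over $\Ha$ is exactly computing the distance from the point $(\gm_1,\dots,\gm_n)\in\Ha^n$ to the closed set $\Delta$, and the Karcher mean is the corresponding metric projection. Since $H^k$ for $k\geq 2$ is a strong Riemannian metric and (by the corollary and~\cite{Bruveris}) $\Ha$ is geodesically complete with surjective exponential map, the product $\Ha^n$ inherits the same properties; in particular it is a (smooth, strong) Riemannian Hilbert manifold that is geodesically complete with surjective exponential map.

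Next I would invoke \cite[Theorem~3.5]{Azagra}, which states that on a Riemannian manifold satisfying the relevant completeness/convexity-radius hypotheses, for any closed subset $C$ the set of points admitting a unique nearest point in $C$ is dense (indeed residual). Applying this with the ambient manifold $\Ha^n$ and the closed set $C=\Delta$ yields a dense subset $\widetilde D\subset\Ha^n$ of points $(\gm_1,\dots,\gm_n)$ for which the projection onto $\Delta$ is a singleton, i.e.\ for which the minimizer of $F_{(\gm_1,\dots,\gm_n)}$ is unique. Taking $D=\widetilde D$ gives exactly the claimed statement. Existence of at least one minimizer (needed so the statement is non-vacuous off $D$ as well) follows from geodesic completeness together with the fact that $F$ is proper and bounded below, or directly from the surjectivity of the exponential map via the Hopf--Rinow-type argument available here; but the density part is the heart of the matter and is precisely what Azagra--Ferrera supplies.

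The main obstacle is verifying that $\Ha^n$, with the product of the weak-or-strong $H^k$ Riemannian metric, genuinely satisfies the hypotheses of~\cite[Theorem~3.5]{Azagra}: that theorem is stated for Riemannian manifolds (possibly infinite-dimensional Hilbert manifolds) and requires geodesic completeness together with a lower bound on the convexity/injectivity radius along bounded sets, or the specific regularity assumptions in that paper. One must therefore check that the strong Riemannian structure of $\Ha$ (hence of $\Ha^n$) together with geodesic completeness and surjectivity of $\exp$ place us within the scope of their result; this is a matter of matching hypotheses rather than a new computation, and it reduces to the facts already collected: $H^k$ with $k\geq2$ is a smooth strong metric (\cite{Bruveris}, and Proposition~\ref{equiv-norms-sob} here), the exponential map is defined for all time and surjective (the corollary above), and closedness of $\Delta$ is immediate. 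Once the applicability of~\cite[Theorem~3.5]{Azagra} is granted, the conclusion is a one-line consequence.
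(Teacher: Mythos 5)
Your proposal is correct and follows essentially the same route as the paper: the paper's proof also identifies the K\"archer mean problem with the metric projection of $(\gm_1,\ldots,\gm_n)$ onto the closed diagonal in $\Ha^n$ and invokes \cite[Theorem 3.5]{Azagra}, with the surjectivity of the exponential map (the preceding corollary) being what licenses that application. Your write-up simply spells out the hypothesis-checking that the paper leaves implicit.
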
 

\begin{proof}
Let $S$ be the diagonal in $\Ha^n$. The set $S$ is a closed subset of $ {\underbrace{\Ha \times \ldots \times \Ha}_{n \text{ times}}}$. In~\cite[Theorem 3.5]{Azagra} the authors prove that the set of minimizers of $\arg \min_{y\in S} d(x,y)$ is a singleton for a dense subset in~$\Ha^n$.
\end{proof}

We call the space of geometric curves the quotient space $\Ha^i \, / \, {\rm{Diff}_0^k(\Circ)}$, where $\Ha^i$ denotes the set of globally injective curves. The notation $[\gm]$ represents the class of $\gm$ in the quotient space.
Analogously to the $BV^2$-case we can define a distance $\mathcal{D}$ between two geometric curves (see Proposition \ref{distance-geom}). The fact that the distance satisfies $d([\gm_0],[\gm_1]) = 0$ implies $[\gm_0]=[\gm_1]$ can be proven using Lemma \ref{BoundedReparametrizations}. This lemma actually implies the following proposition.
\begin{prop}
Let $\gm \in \Ha^i$ and $r>0$. Denoting the equivalence class of $\gm$ by $[\gm]$ and $B_d(\gm,r)$ the closed geodesic ball of radius $r$,
 the set $[\gm] \cap B_d(\gm,r)$ is weakly compact. 
 %
\end{prop}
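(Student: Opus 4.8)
The plan is to prove that $K := [\gm]\cap B_d(\gm,r)$ is bounded and (sequentially) weakly closed in the separable Hilbert space $H^k(\Circ,\R^2)$; since a bounded, weakly sequentially closed subset of a Hilbert space is weakly compact, this yields the statement.

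First I would establish boundedness. Given $\eta\in K$, Theorem~\ref{existence_sobolev} provides a minimizing geodesic $\GA$ from $\gm$ to $\eta$ with $E(\GA)=d(\gm,\eta)^2\le r^2$, and every intermediate curve satisfies $d(\gm,\GA(t))\le E(\GA)^{1/2}\le r$; hence the norm equivalence~\eqref{bound-sob-norms}, which is uniform on the geodesic ball of radius $r$, gives $\int_0^1\|\GA_t(t)\|_{\hs}^2\,\d t\le C(\gm,r)^2E(\GA)$, and since $\GA(0)=\gm$ we obtain $\|\eta\|_{\hs}\le\|\gm\|_{\hs}+C(\gm,r)\,r=:M$. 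The same minimizing geodesic, together with the Sobolev analogue of part~2 of Lemma~\ref{no-collapse} (whose proof carries over, replacing the estimate~\eqref{length2} by its counterpart obtained from the Poincar\'e inequality in $H^k(\GA(t))$ applied to the zero-mean field $\d\GA_t(t)/\d\GA(t)$, cf.\ Lemma~2.13 in~\cite{Bruveris}), yields a uniform lower bound $\|\eta'(s)\|\ge\varepsilon_0(\gm,r)>0$ for all $s\in\Circ$ and all $\eta\in K$.

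Next I would check that $K$ is weakly sequentially closed. Let $\eta_j=\gm\circ\psi_j\in K$ with $\psi_j\in{\rm Diff}_0^k(\Circ)$ and $\eta_j\rightharpoonup\eta$ in $\hs$. By the compact embedding $\hs\hookrightarrow C^{k-1}(\Circ,\R^2)$ we have $\eta_j\to\eta$ in $C^1$, so $\|\eta'(s)\|\ge\varepsilon_0>0$ and $\eta$ is an immersion. To see that $\eta\in B_d(\gm,r)$, pick $\GA^j\in\mathcal P(\gm,\eta_j)$ with $E(\GA^j)\le d(\gm,\eta_j)^2+1/j\le r^2+1$ and rerun the compactness argument of Theorem~\ref{existence_sobolev}: the $\GA^j$ are bounded in $H^1([0,1],\hs)$, a subsequence converges in $C([0,1],H^{k-1}(\Circ,\R^2))$ to some $\GA^\infty\in H^1([0,1],\Ha)$ with $\GA^\infty(0)=\gm$ and $\GA^\infty(1)=\lim_j\eta_j=\eta$, and the weak lower semicontinuity of $E$ along that argument gives $d(\gm,\eta)^2\le E(\GA^\infty)\le\liminf_j E(\GA^j)\le r^2$.

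The remaining point, which I expect to be the main obstacle, is to show $\eta\in[\gm]$; for this I would prove that $\{\psi_j\}$ is bounded in $H^k(\Circ,\Circ)$. Using the definition of the constant speed parameterization (Remark~\ref{arc-len}) and the injectivity of $\gm$, one obtains the factorization $\psi_j=\varphi_\gm\circ\rho_j\circ s_{\eta_j}$, where $s_{\eta_j}=\varphi_{\eta_j}^{-1}$ is the arc-length parameter of $\eta_j$ and $\rho_j$ is the rotation of $\Circ$ by $s_\gm(\psi_j(s_0))$. By the previous step and (the proof of) Lemma~\ref{BoundedReparametrizations}, $s_{\eta_j}$ is bounded in $H^k(\Circ,\Circ)$ and satisfies $s_{\eta_j}'=\|\eta_j'\|/\len(\gm)\ge\varepsilon_0/\len(\gm)>0$; since $\varphi_\gm$ is fixed and the $\rho_j$ are uniformly bounded, the composition estimate used in the proof of Theorem~\ref{existence_sobolev} (Lemma~2.7 of~\cite{Inci2013}) gives $\sup_j\|\psi_j\|_{H^k(\Circ,\Circ)}<\infty$. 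Passing to a subsequence, $\psi_j\rightharpoonup\psi_\infty$ in $H^k$, hence $\psi_j\to\psi_\infty$ in $C^{k-1}$, so $\gm\circ\psi_j\to\gm\circ\psi_\infty$ in $C^1$ and therefore $\eta=\gm\circ\psi_\infty$. Finally, $\|\eta_j'(s)\|=\|\gm'(\psi_j(s))\|\,\psi_j'(s)$ and the previous step force $\psi_j'\ge\varepsilon_0/\max_{\Circ}\|\gm'\|>0$, so $\psi_\infty'>0$ on $\Circ$; thus $\psi_\infty$ is an orientation preserving $H^k$-diffeomorphism of $\Circ$ whose inverse is again in $H^k$ (Lemma~\ref{BoundedReparametrizations}), so $\psi_\infty\in{\rm Diff}_0^k(\Circ)$ and $\eta=\gm\circ\psi_\infty\in[\gm]$. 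Hence $\eta\in K$, $K$ is weakly sequentially closed, and the proof is complete. The delicate ingredients are precisely this uniform $H^k$-bound on the $\psi_j$ and the uniform positive lower bound on $\|\eta_j'\|$; the rest follows from the existence theorem and the control of reparameterizations.
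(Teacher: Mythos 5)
Your proof is correct, and its core is the same as the paper's: uniform bounds on the geodesic ball (an $H^k$-bound and a lower bound on $\|\gm'\|$), Lemma~\ref{BoundedReparametrizations} to bound the reparameterizations in $H^k(\Circ,\Circ)$, weak convergence plus the compact embedding into $C^{k-1}$ to pass to a limit whose derivative stays bounded below, hence a limit diffeomorphism identifying the weak limit as an element of $[\gm]$. The execution differs in two respects worth noting. First, the paper normalizes $\gm$ to constant speed and bounds the reparameterizations $\phi_n$ defined by $\gm_n\circ\phi_n=\gm$, i.e.\ precisely the constant speed reparameterizations of the $\gm_n$, so Lemma~\ref{BoundedReparametrizations} applies verbatim and no composition estimate is needed; you instead bound $\psi_j$ (with $\eta_j=\gm\circ\psi_j$) through the factorization $\psi_j=\varphi_\gm\circ\rho_j\circ s_{\eta_j}$ and the composition bound of~\cite{Inci2013}, which is slightly heavier but equivalent. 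Second, you verify two points that the paper's proof leaves implicit: the explicit $H^k$-boundedness and derivative lower bound on $B_d(\gm,r)$ (obtained from~\eqref{bound-sob-norms} and the Sobolev analogue of Lemma~\ref{no-collapse}), and, more importantly, that the weak limit remains in the closed geodesic ball, which you get by rerunning the lower semicontinuity argument of Theorem~\ref{existence_sobolev} along near-minimizing paths with varying endpoints; this is genuinely needed for weak closedness of $[\gm]\cap B_d(\gm,r)$ and is a useful addition rather than a deviation.
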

\begin{proof}
Without loss of generality, we can assume that $\gm$ is parameterized by constant speed parameterization. 
There exist $\varepsilon$ and $M$, two positive constants, such that for any $\gm_1 \in B(\gm)$, we have $\| \gm_1' \|\geq \varepsilon$ and $\| \gm_1 \|_{H^s(\Circ,\R^2)} \leq M$. Let $\gm_n \in [\gm] \cap B_d(\gm,r)$ be a sequence then
using Lemma \ref{BoundedReparametrizations}, the reparameterizations $\phi_n$ are bounded in $H^k(\Circ,\Circ)$ and thus there exists a subsequence that weakly converges to $\phi_\infty \in H^k(\Circ,\Circ)$. It implies that $\phi_n'$ weakly converges to $\phi_\infty' \in H^{k-1}(\Circ,\R)$ and thus $\phi'_\infty  \geq \varepsilon$ since $k-1 >1/2$. Therefore $\phi_\infty$ belongs to ${\rm{Diff}}_0^k(\Circ)$. Using the same argument $\gm_n \circ \phi_n $ weakly converges to $\gm_\infty \circ \phi_\infty$. However, by definition we have $\gm_n \circ \phi_n = \gm$ so that $\gm_\infty \circ \phi_\infty = \gm$, which gives the result.
\end{proof}

By the same arguments used to prove Theorem \ref{localbv2} and the previous proposition, we easily get the following.

\begin{thm}[{\bf geometric existence}]\label{geom-sob}
Let $\gm_0\, ,\gm_1 \in \Ha^i$ such that $\mathcal{D}([\gm_0] , [\gm_1])<\infty$. Then there exists a minimizer  of $\mathcal{D}([\gm_0] , [\gm_1])$. More precisely, there exists $\gm_2 \in [\gm_1]$ such that $d(\gm_0,\gm_2) = \mathcal{D}([\gm_0] , [\gm_1])$.	
\end{thm}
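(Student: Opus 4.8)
The plan is to run the proof of Theorem~\ref{localbv2} in the Sobolev setting, using Theorem~\ref{existence_sobolev} in place of Theorem~\ref{local_existence} and the weak compactness of orbits from the preceding proposition in place of the $BV^2$-compactness of reparameterizations. By the invariance~\eqref{invariance-2} of $d$, I would take $\gm_0,\gm_1$ parameterized by constant speed and write $\mathcal{D}([\gm_0],[\gm_1])=\inf\{d(\gm_0,\gm_1\circ\psi):\psi\in{\rm Diff}_0^k(\Circ)\}$. Pick $\{\psi_h\}\subset{\rm Diff}_0^k(\Circ)$ with $d(\gm_0,\gm_1\circ\psi_h)\to\mathcal{D}([\gm_0],[\gm_1])$ and $D:=\sup_h d(\gm_0,\gm_1\circ\psi_h)<\infty$, and for each $h$ apply Theorem~\ref{existence_sobolev} to get a minimizing geodesic $\GA^h\in\mathcal{P}(\gm_0,\gm_1\circ\psi_h)$ with $E(\GA^h)=d^2(\gm_0,\gm_1\circ\psi_h)$.

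Next, exactly as in the proof of Theorem~\ref{existence_sobolev}, the bound $E(\GA^h)\le D^2$ and the norm equivalence~\eqref{bound-sob-norms} bound $\GA^h$ in $H^1([0,1],H^k(\Circ,\R^2))$, so the compact embedding into $C([0,1],H^{k-1}(\Circ,\R^2))$ yields a subsequence with $\GA^h(t)\to\GA^\infty(t)$ in $W^{1,1}(\Circ,\R^2)$ and $\GA^h_t(t)\rightharpoonup\GA^\infty_t(t)$ weakly in $H^k(\Circ,\R^2)$ for every $t$, where $\GA^\infty\in H^1([0,1],\Ha)$ (its intermediate curves are immersions by the same pointwise speed bound used in Theorem~\ref{existence_sobolev}) and $\GA^\infty(0)=\gm_0$. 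The one new point is the moving endpoint: since $\gm_1\circ\psi_h=\GA^h(1)$ stays in the geodesic ball $B_d(\gm_0,D)$, the length- and speed-control arguments of Proposition~\ref{no-collapse} together with~\eqref{bound-sob-norms} keep $\gm_1\circ\psi_h$ in a fixed ball of $\hs$ with $\|(\gm_1\circ\psi_h)'\|$ uniformly bounded below; Lemma~\ref{BoundedReparametrizations}, applied as in the preceding proposition, then bounds $\psi_h$ in $H^k(\Circ,\Circ)$. Passing to a further subsequence, $\psi_h\rightharpoonup\psi_\infty$ weakly in $H^k$, and since $k-1>1/2$ the embedding $H^{k-1}\hookrightarrow C^0$ forces $\psi_\infty'\ge\varepsilon>0$, so $\psi_\infty\in{\rm Diff}_0^k(\Circ)$ and $\gm_1\circ\psi_h\to\gm_1\circ\psi_\infty$ in $W^{1,1}(\Circ,\R^2)$; comparing with $\GA^h(1)\to\GA^\infty(1)$ gives $\GA^\infty(1)=\gm_1\circ\psi_\infty$, hence $\GA^\infty\in\mathcal{P}(\gm_0,\gm_1\circ\psi_\infty)$.

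Finally I would copy the lower-semicontinuity step of Theorem~\ref{existence_sobolev}: by Lemma~\ref{conv-w11} the constant speed parameterizations satisfy $\phi_{\GA^h(t)}\to\phi_{\GA^\infty(t)}$ in $W^{1,1}(\Circ,\Circ)$, they are bounded in $H^k$ by Lemma~\ref{BoundedReparametrizations}, so as in~\eqref{sci2} one gets $\GA^h_t\circ\phi_{\GA^h(t)}\rightharpoonup\GA^\infty_t\circ\phi_{\GA^\infty(t)}$ weakly in $\hs$; with $\len(\GA^h(t))\to\len(\GA^\infty(t))$, weak lower semicontinuity of the $H^k$ seminorm and~\eqref{change-sob} give $\|\GA^\infty_t(t)\|_{H^k(\GA^\infty(t))}\le\liminf_h\|\GA^h_t(t)\|_{H^k(\GA^h(t))}$ for every $t$. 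Integrating and applying Fatou's lemma yields $E(\GA^\infty)\le\liminf_h E(\GA^h)=\mathcal{D}^2([\gm_0],[\gm_1])$, and since $\GA^\infty$ joins $\gm_0$ to $\gm_2:=\gm_1\circ\psi_\infty\in[\gm_1]$ we obtain $\mathcal{D}^2([\gm_0],[\gm_1])\le d^2(\gm_0,\gm_2)\le E(\GA^\infty)\le\mathcal{D}^2([\gm_0],[\gm_1])$, so equality holds throughout and $\gm_2$ is the desired minimizer. I expect the main obstacle to be the second paragraph's control of the reparameterizations — ruling out that the limiting $\psi_\infty$ degenerates (loses invertibility) — which hinges on the uniform lower bound $\|(\gm_1\circ\psi_h)'\|\ge\varepsilon$ inherited from the fact that the endpoints stay in a geodesic ball, combined with Lemma~\ref{BoundedReparametrizations} and the embedding $H^{k-1}\hookrightarrow C^0$.
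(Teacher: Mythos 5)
Your proof is correct and is essentially the paper's intended argument: the paper obtains Theorem~\ref{geom-sob} precisely by combining the scheme of Theorem~\ref{localbv2} with the compactness and semicontinuity machinery of Theorem~\ref{existence_sobolev} and the orbit-compactness proposition based on Lemma~\ref{BoundedReparametrizations} and the embedding $H^{k-1}(\Circ,\R)\hookrightarrow C^0(\Circ,\R)$, which is exactly what you spell out. The only small imprecision is that Lemma~\ref{BoundedReparametrizations} directly bounds the constant-speed reparameterization of $\gm_1\circ\psi_h$ (essentially $\psi_h^{-1}$); the uniform bound on $\psi_h$ itself follows from the bound on the arc-length map $s_{\gm_1\circ\psi_h}$ established inside that lemma's proof (or by running the limiting argument on the inverses, as the proposition does), after which your conclusion is unaffected.
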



\section{Numerical Computations of Geodesics}
\label{discretization}

In this section we discretize and relax problem~\eqref{problem} in order to approximate numerically  geodesic paths. Note that, since we use a gradient descent to minimize a discretized and relaxed energy, the resulting optimal discrete homotopy aims at approximating stationary points of the geodesic energy, and that these homotopies cannot be guaranteed to be globally minimizing geodesics.

\subsection{Penalized Boundary Condition at $t=1$}

To obtain a simple numerical scheme, we first relax the constraint $\GA(1)=\gm_1$ by adding at the energy $E$ a data fidelity term $H(\GA(1), \gm_1)$ taking into account the deviation between $\GA(1)$ and $\gm_1$. In the following we make use of the $H$ functional defined  in~\cite[equation 5.4]{Rigid-evol} .

Such a functional is defined as the following distance between two curves

$$H(\gm,\la) = \int_{\Circ} \int_{\Circ} \|\n_{\gm}(s) - \n_{\lambda}(t)\|^2 \; 
	k\left( \gm(s) , \lambda(t) \right)\; \d \gm(s) \d \lambda(t)\; \quad 
	\forall \gm, \la \in \Bb$$
where

$$	k(v, w) =  e^{-\frac{\|v-w\|^2}{2\sigma^2}} + e^{-\frac{\|v-w\|^2}{2\delta^2} } 
	\quad \quad \forall v,\,w\in \R^2\,.
$$
Here $(\sigma,\delta)$ are positive constants that should be adapted depending on the targeted application. We use a sum of Gaussian kernels to better
capture geometric features at different scales in the curves to be matched. This has been shown to be quite
efficient in practice in a different context in \cite{FX-ref}. According to our numerical tests, the use of more than two kernels does not improve the results.

This functional $H$ was initially proposed in~\cite{currents-matching} as a norm on a reproducing Hilbert space of currents.
 It can be shown to be a metric on the space of geometric curves, which explains why it is a good candidate to enforce approximately the boundary constraint at time $t=1$. We recall that $H$ is continuous with respect to strong topology of $W^{1,1}(\Circ,\RR^2)$. We refer to~\cite{Rigid-evol} for its properties and its discretization using finite elements. 

Then, given two curves $\gm_0, \gm_1\in \Bb$, we consider the following  problem 
\begin{equation}\label{simulation}
\begin{array}{c}
	\min \,\{F(\GA)\,:\, \GA\in H^1([0,1],\Bb)\,,\; \GA(0)=\gm_0\}\,,\vspace{0.2cm}\\
	F(\GA) = H(\GA(1), \gm_1) + E(\GA).
\end{array}
\end{equation}
To allow for more flexibility in the numerical experiments, we introduce a weighted $BV^2$-norm in the definition~\eqref{energy-bv2} of the energy $E$. Given some positive weights $(\la_0,\la_1,\la_2) \in (\RR^+)^3$,  we consider in this section 
\eql{\label{eq-E-BV2}
	E(\GA) = \int_0^1 \|\GA_t(t)\|_{BV^2(\GA(t))} \d t\,,
}
where, for all $\gm \in \Bb$ and $v \in T_\gm \Bb$,  
\eq{
	\|v\|_{BV^2(\gm)} = 
	\int_{\Circ}\left( 
		\la_0 |v(s)|
		+
		\la_1 \left|\frac{\d v}{\d \gm}(s) \right|
		+
		\la_2 \left|\frac{\d^2 v}{\d \gm^2}(s) \right| 
	\right)\d\gm(s). 
}

\subsection{Regularized Problem }
\label{sec-regul-problem-gamma}

The energy minimized in~\eqref{simulation} is both non-smooth and non-convex. In order to compute stationary points using a gradient descent scheme, we further relax this problem by smoothing the $\RR^2$-norm used to calculate the $BV^2$-norm. This approach is justified by the result proved  in Theorem~\ref{thm-gamma-cv}. 

The energy $E$ is regularized as
\eql{\label{eq-Eeps-def}
	E_\epsilon(\GA) = \int_0^1 \|\GA_t(t)\|_{BV^2(\GA(t))}^\epsilon \d t\,,
}
where $\epsilon>0$ controls the amount of smoothing, and the smoothed $BV^2$-norm is defined, for $\gm \in \Bb$ and $v \in T_\gm \Bb$, as
$$	
	\|v\|_{BV^2(\gm)}^\epsilon = \int_{\Circ}
		\left( 
			\la_0 \|v(s)\|_\epsilon + 
			\la_1 \left\|\frac{\d v}{\d \gm}(s)\right\|_\epsilon 
		\right)\|\gm'(s)\|_\epsilon\d s + 
		\la_2 TV_{\gm}^2(v), 
$$
where 
$$
	\forall x\in \RR^2, \quad
	\|x\|_\epsilon = \sqrt{\|x\|^2 + \epsilon^2}.
$$ 
A regularization of the second total variation is given by \eqref{reb-tv2} in the case of the finite element space.
The initial problem~\eqref{simulation} is then replaced by 
\begin{equation}\label{simulation-reg}
\begin{array}{c}
	\min \,\{F_\epsilon(\GA)\,:\, \GA\in H^1([0,1],\Bb)\,,\; \GA(0)=\gm_0\}\,,\vspace{0.2cm}\\
F_\epsilon(\GA) = H(\GA(1), \gm_1) + E_\epsilon(\GA)\,.
\end{array}
\end{equation}
This smoothing approach is justified by the following theorem.

\begin{thm}\label{thm-gamma-cv}
	Let $\gm_0\in \Bb$  and  $X= \{ \GA\in H^1([0,1],\Bb)\;:\; \GA(0)=\gm_0\}$.
 Then 
$$
	\underset{\epsilon\rightarrow 0}{\lim} \; \underset{\GA\in X}{\min}\; F_\epsilon(\GA) =  \underset{\GA\in X}{\min}\; F(\GA) \,.
$$
Moreover if $\{\GA_\epsilon\}$ is a sequence of minimizers of $F_\epsilon$ then there exists a subsequence (not relabeled) such that $\GA_\epsilon\overset{\sigma}{\rightarrow}\GA$ as $\epsilon\rightarrow 0$ (see Definition~\ref{sigma}) and $\GA$ is a minimizer of $F$.
\end{thm}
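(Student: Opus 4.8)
The plan is to read Theorem~\ref{thm-gamma-cv} as an equicoercivity-plus-$\Gamma$-convergence statement and to reuse the $\sigma$-topology together with its compactness and lower semicontinuity properties (Theorem~\ref{comp-sigma}), noting — as already observed in the proof of Theorem~\ref{globalbv2} — that the martingale construction of Step~1 of the proof of Theorem~\ref{local_existence} goes through unchanged when the endpoint at $t=1$ is left free. \textbf{The two easy inequalities.} Since $\|x\|_\epsilon=\sqrt{\|x\|^2+\epsilon^2}\geq\|x\|$ and the $TV^2_\gm$-term is untouched by the regularization, one has $\|v\|^\epsilon_{BV^2(\gm)}\geq\|v\|_{BV^2(\gm)}$, hence $E_\epsilon\geq E$ and $F_\epsilon\geq F$ pointwise on $X$, and therefore $\liminf_{\epsilon\to0}\min_X F_\epsilon\geq\min_X F$. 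For the reverse bound I would first record that $F$ attains its minimum on $X$: a minimizing sequence has bounded $E$, hence (Lemma~\ref{no-collapse}, Proposition~\ref{global_bound_Mm}) stays in $\Bb$ with uniformly controlled constants, so one extracts a $\sigma$-convergent subsequence by Theorem~\ref{comp-sigma}(i) (free-endpoint version) and concludes with the $\sigma$-lower semicontinuity of $E$ (Theorem~\ref{comp-sigma}(ii)) and the $W^{1,1}$-continuity of $H$. Fixing such a minimizer $\bar\GA$, dominated convergence (using $0\leq\|x\|_\epsilon-\|x\|\leq\epsilon$ and $0\leq\|\gm'\|_\epsilon-\|\gm'\|\leq\epsilon$) gives $F_\epsilon(\bar\GA)\to F(\bar\GA)=\min_X F$, so $\limsup_{\epsilon\to0}\min_X F_\epsilon\leq\min_X F$. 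This proves $\lim_{\epsilon\to0}\min_X F_\epsilon=\min_X F$.

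\textbf{Equicoercivity of the minimizers.} Let $\GA_\epsilon$ minimize $F_\epsilon$ on $X$. For $\epsilon$ small, $E(\GA_\epsilon)\leq E_\epsilon(\GA_\epsilon)\leq F_\epsilon(\GA_\epsilon)=\min_X F_\epsilon\leq\min_X F+1$, and likewise $H(\GA_\epsilon(1),\gm_1)\leq\min_X F+1$. By Lemma~\ref{no-collapse} and Proposition~\ref{global_bound_Mm}, the uniform bound on $E(\GA_\epsilon)$ forces $\GA_\epsilon(t)\in\Bb$ for all $t$, with $\len(\GA_\epsilon(t))$ and $\underset{s\in\Circ}{\mbox{essinf}}\,\|\GA_\epsilon'(t)(s)\|$ controlled uniformly in $(\epsilon,t)$ and the equivalence constants $m_{\GA_\epsilon(t)},M_{\GA_\epsilon(t)}$ uniformly bounded away from $0$ and $+\infty$. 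After a time-reparameterization of each $\GA_\epsilon$ — which leaves $H(\GA_\epsilon(1),\gm_1)$ unchanged and which can be chosen so that $\GA_{\epsilon,t}$ is uniformly bounded in $L^2([0,1],\BVd)$ — the family $\{\GA_\epsilon\}$ is a bounded set of paths in the sense required by Theorem~\ref{comp-sigma}. Hence, along a subsequence, $\GA_\epsilon\overset{\sigma}{\rightarrow}\GA$ with $\GA\in H^1([0,1],\Bb)$ and $\GA(0)=\gm_0$, and moreover $\GA_\epsilon\to\GA$ strongly in $H^1([0,1],W^{1,1}(\Circ,\RR^2))$ by Theorem~\ref{comp-sigma}(i).

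\textbf{Passing to the limit.} The strong $H^1([0,1],W^{1,1})$-convergence gives $\GA_\epsilon(1)\to\GA(1)$ in $W^{1,1}(\Circ,\RR^2)$, hence $H(\GA_\epsilon(1),\gm_1)\to H(\GA(1),\gm_1)$ by continuity of $H$; and the $\sigma$-lower semicontinuity of $E$ (Theorem~\ref{comp-sigma}(ii)) together with $E\leq E_\epsilon$ yields $E(\GA)\leq\liminf_{\epsilon\to0}E(\GA_\epsilon)\leq\liminf_{\epsilon\to0}E_\epsilon(\GA_\epsilon)$. Adding these, $F(\GA)=H(\GA(1),\gm_1)+E(\GA)\leq\liminf_{\epsilon\to0}F_\epsilon(\GA_\epsilon)=\lim_{\epsilon\to0}\min_X F_\epsilon=\min_X F$, so $\GA$ is a minimizer of $F$, which is the second assertion.

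\textbf{The main obstacle.} The only genuinely delicate point is the equicoercivity step. Theorem~\ref{comp-sigma} was stated for the fixed-endpoint class $\mathcal{P}(\gm_0,\gm_1)$, so one must re-run its martingale construction with the $t=1$ endpoint left free (exactly as in the proof of Theorem~\ref{globalbv2}); and, because the smoothed norm $\|\cdot\|^\epsilon_{BV^2(\gm)}$ is not positively $1$-homogeneous in $v$ (here $\|cv\|_\epsilon\neq c\|v\|_\epsilon$), the constant-speed time-reparameterization that normally converts the length-type energy $E_\epsilon$ of \eqref{eq-Eeps-def} into an $L^2$-in-time bound is no longer norm-preserving. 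One therefore has to argue that a minimizer of $F_\epsilon$ may still be taken in a time-parameterization for which $\GA_{\epsilon,t}$ is uniformly bounded in $L^2$ in time — equivalently, that the martingale bound $\|f_n^\epsilon(t)\|_{BV^2(\Circ,\RR^2)}\leq 2^n m^{-1}E_\epsilon(\GA_\epsilon)$ of the compactness argument can be upgraded to the control needed for the martingale convergence step. Once this is settled, the rest is the routine interplay of the pointwise inequality $F_\epsilon\geq F$, dominated convergence for the recovery sequence, and $\sigma$-lower semicontinuity.
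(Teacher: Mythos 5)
Your proposal is correct and follows essentially the same route as the paper: the paper likewise gets the convergence of the minima from the pointwise monotone convergence $F_\epsilon \downarrow F$ as $\epsilon\to 0$ (swapping the two infima rather than using your fixed-minimizer/dominated-convergence recovery argument), then observes that minimizers of $F_\epsilon$ form a minimizing sequence for $F$, extracts a $\sigma$-convergent subsequence, and concludes by lower semicontinuity of $F$ for the $\sigma$-topology via Theorem~\ref{comp-sigma} and Remark~\ref{existence-sigma}. The extra care you take with the free endpoint at $t=1$ and with the loss of $1$-homogeneity of the smoothed norm under time reparameterization goes beyond the paper's own proof, which simply invokes Theorem~\ref{comp-sigma} and Remark~\ref{existence-sigma} for the existence of the minima of $F$ and $F_\epsilon$ on $X$.
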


\begin{proof} We suppose without loss of generality  that $F$ and $F_\epsilon$ are not equal to infinity. 
Then, by  Theorem~\ref{comp-sigma} and Remark \ref{existence-sigma},  $F$ and $F_\epsilon$ reach their minima on $X$.

As $\{F_\epsilon\}_\epsilon$ is a decreasing sequence converging to $F$ pointwise, we get 
\begin{equation}\label{min-conv}
\underset{\epsilon\rightarrow 0}{\lim} \; \underset{\GA\in X}{\min}\;F_\epsilon(\GA) = \underset{\epsilon>0}{\inf}\; \underset{\GA\in X}{\min}\; F_\epsilon(\GA) =  \underset{\GA\in X}{\min}\; \underset{\epsilon>0}{\inf}\; F_\epsilon(\GA) =  \underset{\GA\in X}{\min}\; F(\GA) \,.
\end{equation}

Thus, if $\{\GA_\epsilon\}$ is a sequence of minimizers of $F_\epsilon$ we have $F(\GA_\epsilon)< F_\epsilon(\GA_\epsilon)$ so that $\{\GA_\epsilon\}$ is a minimizing sequence for $F$.  Then, by Theorem~\ref{comp-sigma}, there exists $\GA\in X$ and  a subsequence (not relabeled) such that $\GA_\epsilon\overset{\sigma}{\rightarrow}\GA$ as $\epsilon\rightarrow 0$. As $F$ is lower semicontinuous with respect to the $\sigma$ convergence, from~\eqref{min-conv},  it follows that $\GA$ is a minimizer of $F$.
\end{proof}

\subsection{Finite Element Space}
\label{sec-finite-elements}

In the following, to ease the notation, we identify $\RR^2$ with $\CC$ and $\Circ$ with $[0,1]$ using periodic boundary conditions.  

To approximate numerically stationary points of~\eqref{simulation-reg}, we discretize this problem by using finite element approximations of the homotopies, which are piecewise linear along the $s$ variable and piecewise constant along the $t$ variable. This choice of elements is justified by the fact that the evaluation of the energy requires the use of two derivatives along the $s$ variable, and a single derivative along the $t$ variable. 

\paragraph{Finite elements curves.}

A piecewise affine curve with $n$ nodes is defined as
\eq{
	\foralls s \in [0,1], \quad
	\gm(s) = \sum_{j=1}^n \tilde \gm_j \xi_j(s)\,,
}
where we used piecewise affine finite elements
\begin{align*}
	\xi_j(s) &= \max\left\{0, 1- n\left| s- \frac{j}{n} \right|\right\} \quad  s\in [0,1],\;  
		\foralls j= 1, \ldots, n-1 \,,\\
	\xi_n(s) &= \max\left\{0, 1- n\left|s \right|\right\} + \;\max\left\{0, 1- n\left| s- 1 \right|\right\},  \quad s\in [0,1].
\end{align*}
Here, $\tilde\gm \in \CC^{n}$ denotes the coordinates of $\gm$ and we denote $\gm=P_{1}(\tilde \gm)$ the corresponding bijection. 

\paragraph{Finite elements homotopies.}

We consider the finite dimensional space of homotopies of the form
\eql{\label{eq-fem-homotop}
	\forall (t,s) \in [0,1]^2, \quad
	\GA(t)(s) = \sum_{i=1}^N \sum_{j=1}^n \tilde\GA_{i,j} \zeta_i(t) \xi_j(s)\,,
}
where we used piecewise constant finite elements
\begin{align*}
	\zeta_i(t) =\myI{}_{[\frac{i}{N},\frac{i+1}{N}]}(t) \quad  \foralls i = 1, ..., N-1 \,,\quad 
	\zeta_N(t) =\myI{}_{[0,\frac{1}{N}]}(t) \,.
\end{align*}
Here, $\tilde\GA \in \CC^{N \times n}$ denotes the coordinates of $\GA$ and we denote $\GA=P_{0,1}(\tilde \GA)$ the corresponding bijection.

\subsection{Discretized Energies}
\label{sec-discretized-energy}

The initial infinite dimensional optimization problem~\eqref{simulation-reg} is discretized by restricting the minimization to the finite element space described by~\eqref{eq-fem-homotop} as follows 
\begin{equation}\label{eq-optim-discrete}:
\begin{array}{c}
	\min \; \enscond{
		 \Ff_\epsilon(\tilde\GA)
	}{
		 \tilde\GA \in \CC^{N \times n}, \; 
		 \tilde\GA_{1,\cdot} = \tilde\gm_0
	}\,,
	\qwhereq	
	\Ff_\epsilon(\tilde\GA) = F_\epsilon(\GA)\,,
\end{array}
\end{equation}
where $\GA=P_{0,1}(\tilde \GA)$ and where the input boundary curves are $\gm_0 = P_1(\tilde\gm_0), \gm_1 = P_1(\tilde\gm_1)$, which are assumed to be piecewise affine finite elements. We have denoted here $\GA_{i,\cdot} = (\GA_{i,j})_{j=1}^n \in \RR^n$.

In order to ease the computation of gradients, we note that the energy $\Ff_\epsilon$ can be decomposed as
\eql{\label{eq-defn-Ff-eps}
	\Ff_\epsilon(\tilde\GA) = H( P_1(\tilde\GA_{N,\cdot}), \gm_1  ) 
		+ \Ee_\epsilon(\tilde\GA)\,,
	\qwhereq
	\Ee_\epsilon(\tilde\GA) = 
	\frac{1}{N-1} \sum_{i=1}^{N-1}J( \tilde\GA_{i,\cdot}, \tilde v_i )	\,,		
}
where we denoted the discrete time derivative vector field as
\eq{
	\tilde v_i = \frac{
			\tilde\GA_{i+1,\cdot}-\tilde\GA_{i,\cdot}
		}{N-1} \in \CC^n.
}
For $\tilde \gm \in \CC^n$ and $\tilde v \in \CC^n$, we used the notation
\eq{
	J(\tilde \gm, \tilde v) =  \sum_{\ell=0}^2 \la_\ell J_\ell(\tilde \gm, \tilde v)
}
and we define below the explicit computation of the terms $J_\ell(\tilde \gm, \tilde v)$ for $\ell=0, 1, 2$.

\paragraph{Zero order energy term ($\ell=0$).}

The $L^1$ norm of a piecewise affine field $v = P_1(\tilde v)$ tangent to a piecewise affine curve $\gm = P_1(\tilde\gm)$ can be computed as 
$$
	\int_{\Circ} |v(s)|_\epsilon |\gm'(s)|_\epsilon\d s = 
	\sum_{i=1}^n n|\Delta^+(\tilde{\gm})_i|_{\frac{\epsilon}{n}}\int_{\frac i n}^{\frac{i+1}{n}}|\tv_i\xi_i(s) + \tv_{i+1}\xi_{i+1}(s)|_\epsilon \d s.
$$
This quantity cannot be evaluated in closed form. For numerical simplicity, we thus approximate the integral by the trapezoidal rule. With a slight abuse of notation (this is only an approximate equality),  we define the discrete $L^1$-norm as
$$	
	J_0(\tilde{\gm},\tilde{v}) = \frac{1}{2} 
	\sum_{i=1}^n |\Delta^+(\tilde{\gm})_i|_{\frac{\epsilon}{n}}
		\Big(|\tv_i|_{\epsilon}+|\tv_{i+1}|_{\epsilon}\Big),
$$
where we used the following forward finite difference operator
\begin{equation*}
	\Delta^+: \CC^n\rightarrow  \CC^n \;, \quad \Delta^+( \tilde\gm )_i = \tilde\gm_{i+1} - \tilde\gm_i\,.
\end{equation*}

\paragraph{First order energy term ($\ell=1$).}

We point out that 
\begin{equation}\label{first-discrete}
\frac{\d v}{\d \gm} = \sum_{i=1}^n  \frac{\Delta^+(\tv)_i}{|\Delta^+(\tilde{\gm})_i|_{\frac{\epsilon}{n}}}\zeta_i 
\end{equation}
which implies that 

$$
	\int_{\Circ }\left|\frac{\d v}{\d \gm(s)}\right|_\epsilon|\gm'(s)|_\epsilon\d s
	= \sum_{i=1}^n \int_{\frac i n}^{\frac{i+1}{n}}n|\Delta^+(\tv)_i|_{\frac{\epsilon}{n}} \d s.
$$
Then the discretized $L^1$-norm of the first derivative is defined by
$$
	J_1(\tilde{\gm},\tilde{v})= \sum_{i=1}^n |\Delta^+(\tv)_i|_{\frac{\epsilon}{n}}.
$$

\paragraph{Second order energy term ($\ell=2$).}

As the first derivative is piecewise constant, the second variation coincides with the sum of the jumps of the first derivative. 
In fact, for every $g\in {\rm{C}}^1_c(\Circ, \RR^2)$, we have
$$
\begin{array}{ll}
\displaystyle{\int_{\Circ} \langle\frac{\d v}{\d \gm(s)}, \frac{\d g}{\d \gm(s)} \rangle\d \gm(s)} & =  
\displaystyle{\sum_{i=1}^n \int_{\frac i n}^{\frac{i+1}{n}} \langle\frac{\d v}{\d \gm(s)}, g'(s) \rangle\d s =}\\
& \displaystyle{= \sum_{i=1}^n  \left\langle \frac{\d v}{\d \gm}\left( \frac{i-1}{n}\right) - \frac{\d v}{\d \gm}\left( \frac i n\right) , g\left( \frac i n \right)\right\rangle\,.}
\end{array}
$$
Then, by~\eqref{first-discrete}, the second variation $TV_{\gm}^2(v)$ can be defined as  
\begin{equation}\label{reb-tv2}
	J_2(\tilde{\gm},\tilde{v}) = 
	\sum_{i=1}^n \left|\frac{\Delta^+(\tv)_{i+1}}{|\Delta^+(\tilde{\gm})_{i+1}|_{\frac{\epsilon}{n}}} -\frac{\Delta^+(\tv)_i}{|\Delta^+(\tilde{\gm})_i|_{\frac{\epsilon}{n}}}\right|_\epsilon. 
\end{equation}
We point out that $J_2$ represents a regularized definition of the second total variation because we evaluate the jumps by the smoothed norm  $|\cdot|_\varepsilon$.

\subsection{Minimization with Gradient Descent} 

The finite  problem~\eqref{eq-optim-discrete} is an unconstrained optimization on the variable $(\tilde \GA_{2,\cdot},\ldots,\tilde \GA_{N,\cdot})$, since $\tilde \GA_{1,\cdot}=\tilde \gm_0$ is fixed. The function $\mathcal{F}_\epsilon$ being minimized is $C^1$ with a Lipschitz gradient, and we thus make use of a gradient descent method. In the following, we compute the  gradient for the canonical inner product in $\CC^{N \times n}$.

Starting from some $\tilde\Ga^{(0)} \in \CC^{N \times n}$, we iterate 
\eql{\label{eq-grad-desc}
	\tilde\Ga^{(k+1)} = \tilde\Ga^{(k)} - \tau_k \nabla \Ff_\epsilon(\tilde\GA^{(k)}) \,,
}
where $ \tau_k>0$ is the descent step. A small enough gradient step size (or an adaptive line search strategy) ensures that the iterates converge toward a stationary point $\Ga^{(\infty)}$ of $\Ff_\epsilon$. 

The gradient $\nabla \Ff_\epsilon(\tilde\GA)$ is given by its partial derivatives as, for $i=2, \ldots ,N-1$, 
$$
	\partial_{\tilde\GA_i} \Ff_\epsilon(\tilde\GA) = 
	\frac{1}{N-1} \Big(
		\partial_1 J(\tilde{\GA}_i, \tilde{v}_i) - 
		\frac{1}{N-1}\partial_2 J(\tilde{\GA}_{i+1}, \tilde{v}_{i+1}) +  
		\frac{1}{N-1}\partial_2 J(\tilde{\GA}_{i-1}, \tilde{v}_{i-1})
		\Big)  \,,
$$ 
where $\partial_1 J$ (reap., $\partial_2 J$) is the derivative of $J$ with respect to the first (resp. second) variable and 
$$
	\partial_{\tilde{\GA}_N} \Ff_\epsilon = 
	\de + \frac{1}{(N-1)^2}\partial_1 J(\tilde{\GA}_{N-1}, \tilde{v}_{N-1})\,, 
$$
where $\de$ is the gradient of the map $\tilde \gm \mapsto H( P_1(\tilde \gm), \gm_1  )$ at $\tilde\gm=\tilde\GA_{N,\cdot}$. This gradient can be computed as detailed in~\cite{Rigid-evol}.

\subsection{Numerical Results}

In this section we show some numerical examples of computations of  stationary points $\tilde\Ga^{(\infty)}$ of the problem~\eqref{eq-optim-discrete} that is intended to approximate geodesics for the $BV^2$- metric. For the numerical simulations we define the $BV^2$-geodesic energy 
\eql{\label{num-bv2}
	E(\GA) = \int_0^1 \|\GA_t(t)\|_{BV^2(\GA(t))}^2 \d t\,
}
by the following weighted $BV^2$-norm
$$\|v\|_{BV^2(\gm)} = \int_{\Circ}\left( \mu_0\|v\|+ \mu_1 \left\|\frac{\d v}{\d \gm}(s)\right\|\right)\,\d\gm(s) + \mu_2 TV_\gm^2\left(v\right) \quad\forall\, v\in BV^2(\gm)\,,$$
where the parameters $(\mu_0,\mu_1,\mu_2)\in (\R^+)^3$  can be tuned for each particular application.
 
 We use a similar approach to approximate geodesics for the $H^k$-metric, for $k=2$, by replacing $E$ in~\eqref{num-bv2} by
$$
	E(\GA) = \int_0^1 \|\GA_t(t)\|_{H^2(\GA(t))}^2 \d t\,,
$$
where, for all $\gm \in H^s(\Circ,\RR^2)$ and $v \in H^s(\gm)$,  
\eq{
	\|v\|_{H^2(\gm)} = 
	\int_{\Circ}\left( 
		\lambda_0 \|v(s)\|^2
		+
		\lambda_1 \left\|\frac{\d v}{\d \gm}(s) \right\|^2
		+
		\lambda_2 \left\|\frac{\d^2 v}{\d \gm^2}(s) \right\|^2 
	\right)\d\gm(s)\,.
}
Note that, in contrast to the $BV^2$ case, this Sobolev energy is a smooth functional, and one does not need to perform a regularization~\eqref{eq-Eeps-def}, or equivalently, one can use $\epsilon=0$ in this case. We do not detail the computation of the gradient of the discretized version of the functional for the Sobolev metric, since these computations are very similar to the $BV^2$ case.

In the following experiments, we use a discretization grid of size $(N,n)=(10,256)$. The weights are set to $(\la_0,\la_1,\la_2)=(1,0,1)$ and $(\mu_0,\mu_1,\mu_2)=(1,0,1)$ (the curves are normalized to fit in $[0,1]^2$). These experiments can be seen as toy model illustrations for the shape registration problem, where one seeks for a meaningful bijection between two geometric curves parameterized by $\gm_0$ and $\gm_1$. Note that the energies being minimized are highly non-convex, so that the initialization $\GA^{(0)}$ of the gradient descent~\eqref{eq-grad-desc} plays a crucial role. 


\begin{figure}[h!]
\centering
\begin{tabular}{@{}c@{\hspace{1mm}}c@{}}
\includegraphics[width=.23\linewidth]{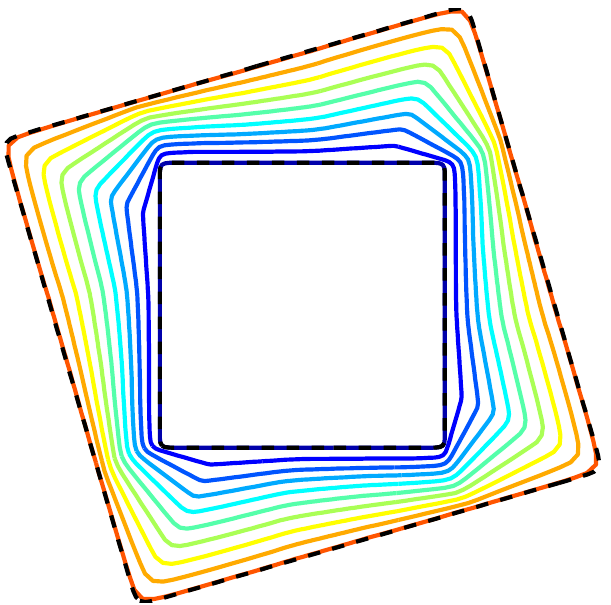}&
\includegraphics[width=.23\linewidth]{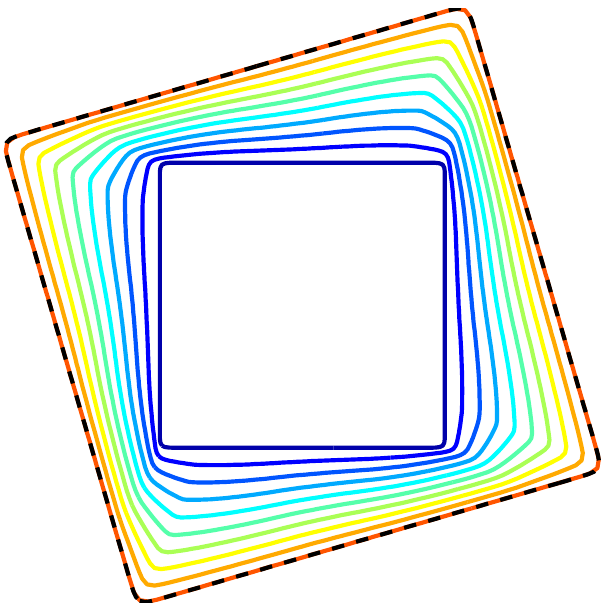}\\
\includegraphics[width=.23\linewidth]{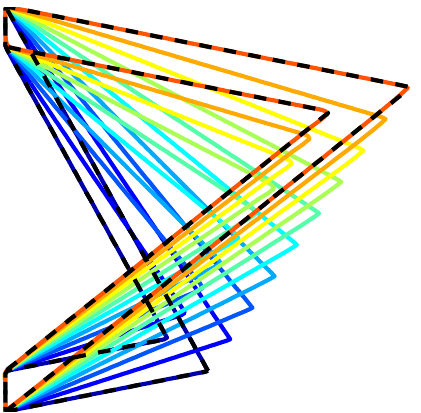}&
\includegraphics[width=.23\linewidth]{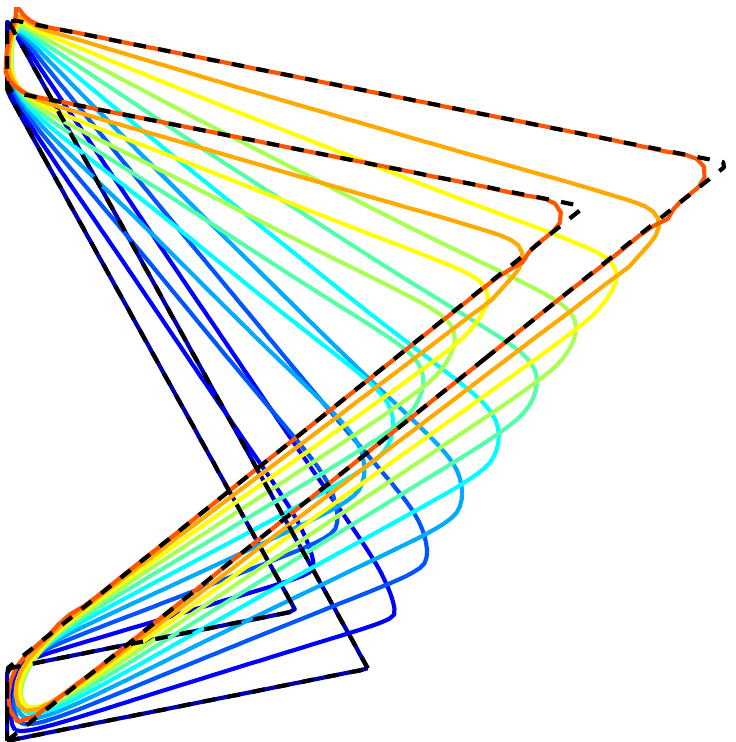}\\
$BV^2$ & Sobolev 
\end{tabular}
\caption{\label{geodesics} Homotopies $\Ga^{(\infty)}$ obtained for $BV^2$ Finsler energy (left) and Sobolev metric (right). Each image displays the initial curve $\gm_0$ (black one) and $\gm_1$ (dash line) and the optimal $\{\tilde\GA_{i,\cdot}\}_i$ where the index $1 \leq i \leq N$ is indicated by color variations between blue ($i=1$) and red ($i=N$). \vspace{1mm}}
\end{figure}

Fig.~\ref{geodesics}, top row, shows a simple test case, for which using a trivial constant initialization $\tilde\GA^{(0)}_i = \tilde\gm_0$, for both $BV^2$-and $H^2$-metric, produces a valid homotopy $\Ga^{(\infty)}$ between $\gm_0$ and $\gm_1$.  One can observe that while both homotopies are similar, the Sobolev metric produces a slightly smoother evolution of curves. This is to be expected, since piecewise affine curves are not in the Sobolev space $H^2(\Circ,\RR^2)$. 

Fig.~\ref{geodesics}, bottom row, shows a more complicated case, where using a trivial initialization $\tilde\GA^{(0)}$ fails to give a correct result $\tilde\GA^{(\infty)}$, because the gradient descent is trapped in a poor local minimum. We thus use as initialization the natural bijection $\tilde\GA^{(0)}$, which is piecewise affine and correctly links the singular points of the curves $\gm_0$ and $\gm_1$. It turns out that this homotopy is a stationary point of the energy~\eqref{eq-optim-discrete}, which can be checked on the numerical results obtained by the gradient descent. On the contrary, the Sobolev metric finds a slightly different homotopy, which leads to smoother intermediate curves.

In Fig. \ref{lastone} we show the influence of the choice of  $(\mu_0,\mu_1,\mu_2)$ in \eqref{num-bv2} on the optimization result. We point out in particular the role of $\mu_2$ which controls the jumps of the derivative and is responsible of the smoothness of the evolution.

\begin{figure}[h!]
\centering
\begin{tabular}{@{}c@{\hspace{1mm}}c@{\hspace{1mm}}c@{}}
\includegraphics[width=.25\linewidth]{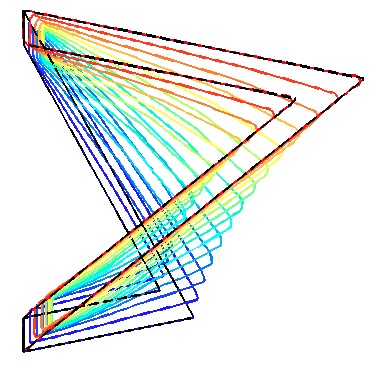}&
\includegraphics[width=.25\linewidth]{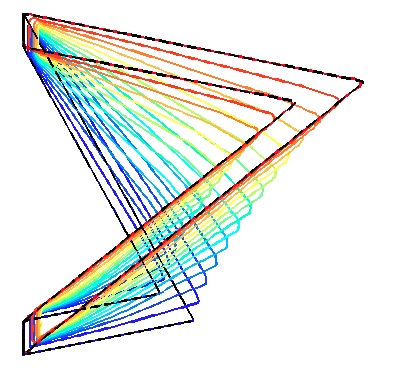}&
\includegraphics[width=.25\linewidth]{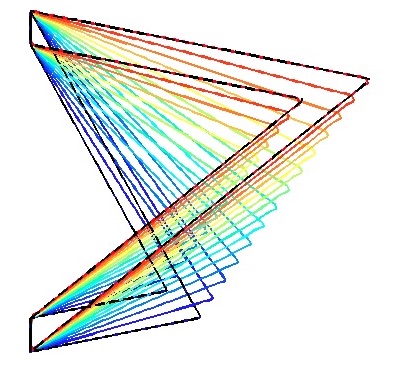}\\
(1,1,10e-05)&(1,1,0.001)&(1,1,0.01)\\
\end{tabular}
\caption{\label{lastone} Finsler $BV^2$-geodesics for different choices of $(\mu_0,\mu_1,\mu_2)$. \vspace{5mm}}
\end{figure}


\section{Conclusion}

The variational approach defined in this work  is a general strategy to prove the existence of minimal geodesics with respect to Finslerian metrics. 

In order to generalize previous results to more general Banach spaces, we point out the main properties which must be satisfied by the Banach topology:

\begin{itemize}
\item[(i)] the two constants $m_\gm, M_\gm$ appearing in Proposition \ref{equiv-norms} must be bounded on geodesic balls;

\item[(ii)] the topology of the space must imply a suitable convergence of the reparameterizations in order to get semicontinuity of the norm of $\{\GA^{h}_t \circ \phi^h(t) \}_h$; in the $BV^2$ case such a convergence is given by the  $W^{1,1}$-strong topology.
\end{itemize}

For the $BV^2$ metric, the major difficulty concerns the characterization of the weak topology of the space of the paths. The usual characterization of the dual of Bochner spaces $H^1([0,1],B)$ ($B$ is a Banach space) requires that the dual of $B$ verifies the RNP \cite{Bochner_dual, Bochner_dual_2}.
We point out that the martingale argument used to prove Theorem \ref{local_existence} avoids using such a characterization and allows one to define a suitable topology in such a space guaranteeing the lower semicontinuity of the geodesic energy.

Moreover, as pointed out in the introduction, the necessary conditions proved in ~\cite{Mord} are not valid in our case. This represents an interesting direction of research because optimality conditions allow one to study regularity properties of minimal geodesics. It remains an open question whether the generalized Euler-Lagrange equations in \cite{Mord} can be generalized to our case and give the Hamiltonian geodesic equations. Strongly linked to this question is the issue of convergence of the numerical method. Indeed, the convergence of the sequence of discretized problems would imply the existence of geodesic equations.

From a numerical point of view, as we have shown, the  geodesic energy suffers from many poor local minima. To avoid some of these poor local minima, it is possible to modify the metric to take into account some prior on the set of deformations. For instance, in the spirit of~\cite{Rigid-evol}, a Finsler metric can be designed to favor piecewise rigid motions.

\section*{Acknowledgement}

The authors want to warmly thank Martins Bruveris for pointing out the geodesic completeness of the Sobolev metric on immersed plane curves. This work has been supported by the European Research Council (ERC project SIGMA-Vision).

\newpage
\bibliographystyle{plain}  
\bibliography{bibliography}	

\end{document}